\newtheorem*{thm:det}{Theorem \ref{thm:det}}
\newtheorem*{thm:d-invariants}{Theorem \ref{thm:d-invariants}}
\newtheorem*{conj:2-bridge}{Conjecture \ref{conj:2-bridge}}
\newtheorem{theorem}{Theorem}[section]
\newtheorem{lemma}[theorem]{Lemma}
\newtheorem{conjecture}[theorem]{Conjecture}
\newtheorem{definition}[theorem]{Definition}
\newtheorem{proposition}[theorem]{Proposition}
\newtheorem{corollary}[theorem]{Corollary}
\theoremstyle{definition}
\newtheorem{example}[theorem]{Example}
\newtheorem{remark}[theorem]{Remark}
\newcommand{\spinc}{{\textup{Spin}^c}}
\begin{document}

\title{Equivariantly slicing strongly negative amphichiral knots} 

\author{Keegan Boyle and Ahmad Issa}  

\address{Department of Mathematics, University of British Columbia, Canada} 
\email{kboyle@math.ubc.ca}
\address{Department of Mathematics, University of British Columbia, Canada} 
\email{aissa@math.ubc.ca}

\setcounter{section}{0}

\begin{abstract}
We prove obstructions to a strongly negative amphichiral knot bounding an equivariant slice disk in the 4-ball using the determinant, Spin\textsuperscript{c}-structures and Donaldson's theorem. Of the 16 slice strongly negative amphichiral knots with 12 or fewer crossings, our obstructions show that 8 are not equivariantly slice, we exhibit equivariant ribbon diagrams for 5 others, and the remaining 3 are unknown. Finally, we give an obstruction to a knot being strongly negative amphichiral in terms of Heegaard Floer correction terms.
\end{abstract}
\maketitle

\section{Introduction}
A \emph{strongly negative amphichiral} knot $(K,\sigma)$ is a smooth knot $K \subset S^3$ along with a smooth (orientation reversing) involution $\sigma\colon S^3 \to S^3$ such that $\sigma(K) = K$ and $\sigma$ has exactly two fixed points, both of which lie on $K$; see Figure \ref{fig:8_9}. A knot $K \subset S^3$ is \emph{slice} if it bounds a smooth disk (the \emph{slice disk}) properly embedded in $B^4$. The main goal of this paper is to study when there exists an equivariant slice disk for a strongly negative amphichiral knot $(K,\sigma)$. Specifically, we are interested in the following property.

% We draw planar diagrams for strongly negative amphichiral knots in $\mathbb{R}^2$ with a fixed point at the origin and a fixed point at infinity (see Figure \ref{fig:12a458}). The symmetry is then given by a $\pi$-rotation around an axis perpendicular to the diagram followed by reflection across the plane of the diagram.
\iffalse
\begin{figure}
\scalebox{.6}{\includegraphics{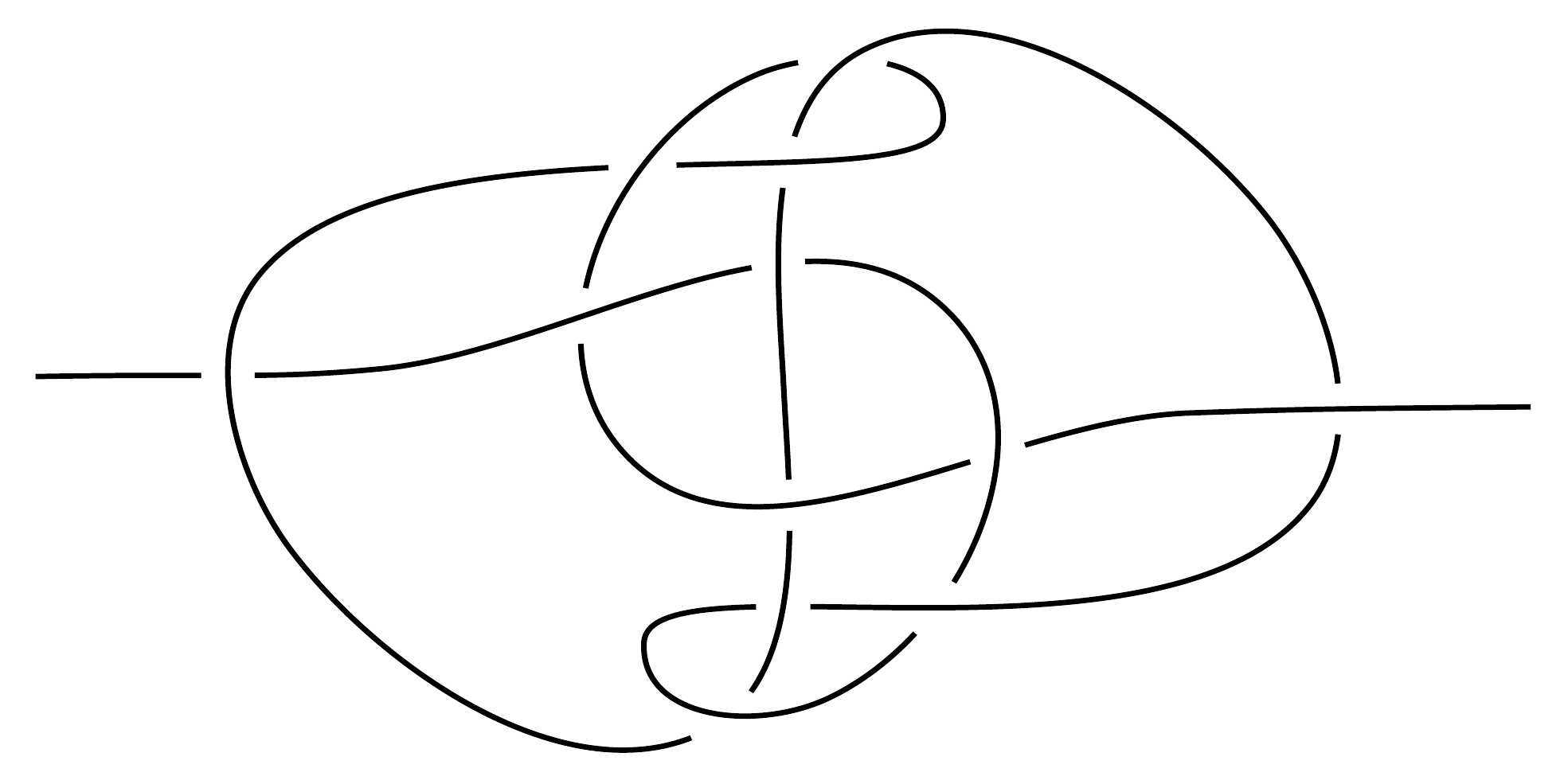}}
\caption{The knot $K = 12a_{458}$. The (unique) strongly negative amphichiral symmetry $\sigma$ is given by $\pi$ rotation around an axis perpendicular to the center of the diagram followed by reflection across the plane of the diagram. The knot $K$ is slice, but we do not know if it has a $\sigma$-invariant slice disk.}
\label{fig:12a458}
\end{figure}
\fi

\begin{definition}
A strongly negative amphichiral knot $(K,\sigma)$ is \emph{equivariantly slice} if there is a smooth slice disk $D$ and a smooth involution $\sigma'\colon B^4 \to B^4$ with $\sigma'(D) = D$ which restricts to $\sigma$ on $\partial B^4 = S^3$. 
\end{definition}

\begin{figure}
\scalebox{.5}{\includegraphics{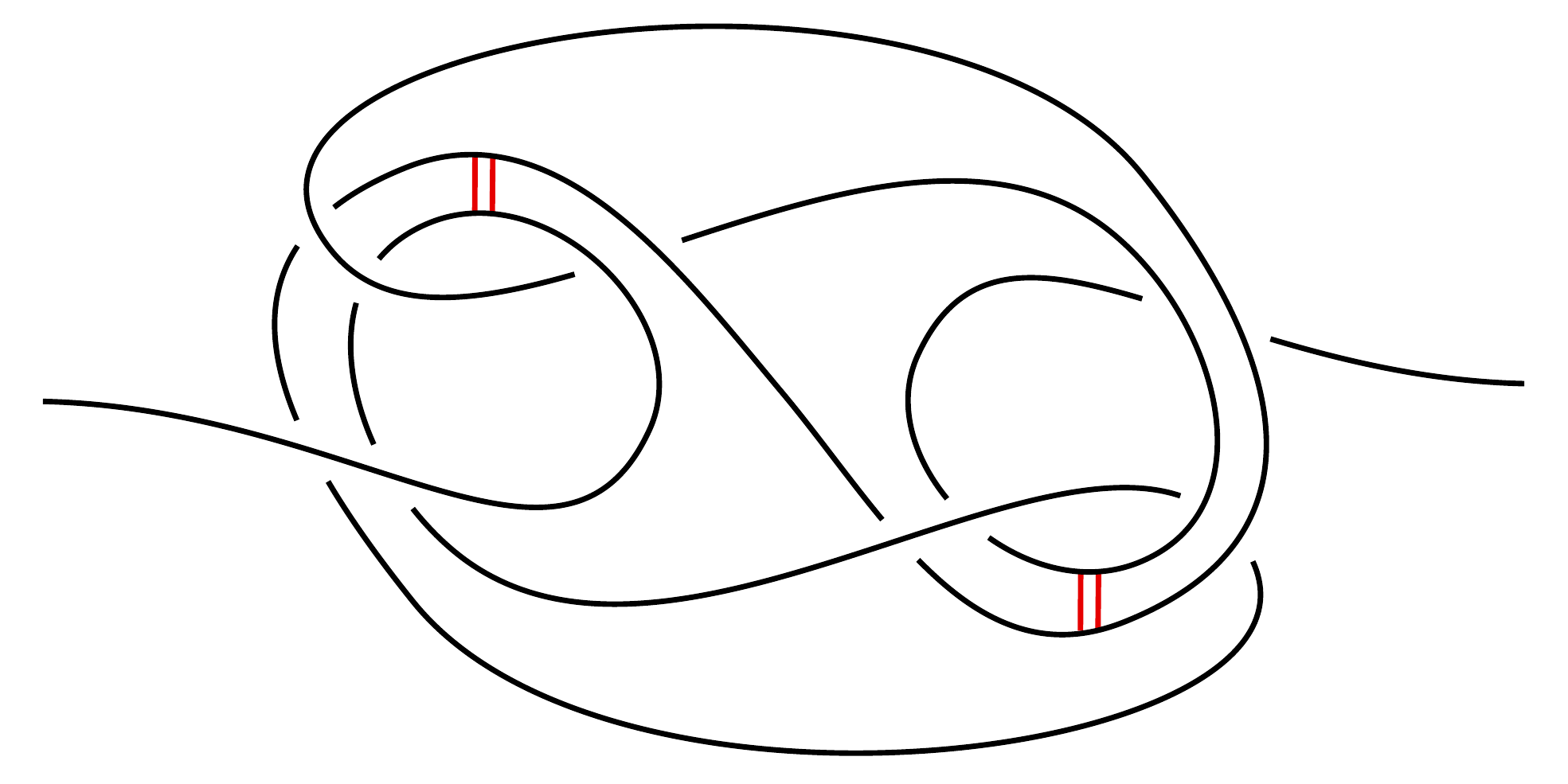}}
\caption{A strongly negative amphichiral diagram for $8_9$. The symmetry is given by $\pi$-rotation around an axis perpendicular to the page followed by a reflection across the plane of the diagram. An equivariant slice disk can be seen by performing the band moves shown in red.}
\label{fig:8_9}
\end{figure}

Figure \ref{fig:8_9} gives an example of a strongly negative amphichiral diagram, that is a knot diagram with the strongly negative amphichiral symmetry given by $\pi$-rotation around an axis perpendicular to the page followed by reflection across the plane of the diagram. Furthermore, the knot in Figure \ref{fig:8_9} is equivariantly slice. The slice disk is given by performing the pair of equivariant band moves shown in red, then equivariantly capping off the resulting 3-component unlink in $B^4$. Among non-trivial prime knots with 12 or fewer crossings there are 16 slice strongly negative amphichiral knots. For five of them, namely $8_9$, $10_{99}$, $12a_{819}$, $12a_{1269}$, and $12n_{462}$,  we found similar equivariant ribbon diagrams; see the table in Section \ref{sec:sliceSNAKStable}.

%Note that strongly negative amphichiral knots cannot bound equivariant surfaces in $S^3$ since the point reflection action at the fixed points does not preserve any surface locally. In particular, we cannot show that a strongly negative amphichiral knot is equivariantly slice by finding an equivariant ribbon disk in $S^3$. Instead, we indicate an invariant pair of bands for which band surgery produces a 3-component unlink (see Figure \ref{fig:8_9}), which always bounds an invariant collection of three disjoint disks in $B^4$. We have found such equivariant ribbon diagrams for $8_9$, $10_{99}$, $12a_{819}$, $12a_{1269}$, and $12n_{462}$; see the table in Section \ref{sec:sliceSNAKStable}.

Strongly negative amphichiral knots, and in particular the equivariant surfaces they bound in the 4-ball, have been studied less than their more popular orientation-preserving cousins: strongly invertible knots, see for example \cite{MR1102258} and \cite{BoyleIssa}, and periodic knots, see for example \cite{MR1605928}, \cite{MR2216254}, \cite{BoyleIssa}, and \cite{GroveJabuka} among others. Many of the obstructions used in the strongly invertible and periodic settings do not adapt to the strongly negative amphichiral case. In fact, even showing that the (non-equivariant) 4-genus for strongly negative amphichiral knots can be arbitrarily large was only recently shown by Miller \cite{Miller}.

%One explanation for this is that they are difficult to study since additive concordance invariants such as the signature vanish, and all strongly negative amphichiral knots are rationally slice \cite[Corollary 1.3]{MR2554510}. 

%In the non-equivariant setting there is more literature. In \cite{MR715764}, Van Buskirk discusses the Alexander polynomials of strongly negative amphichiral knots. In \cite{MR2274531}, Stoimenow shows that the leading coefficient of the Alexander and HOMFLY polynomials of a strongly negative amphichiral knot are perfect squares (up to sign). In \cite{Miller}, Miller uses Casson-Gordon signatures to find strongly negative amphichiral knots with arbitrarily large 4-genus. 

Our first equivariant slice obstruction comes from studying the knot determinant. It was shown by Goeritz \cite{MR1545364} that the determinant of an amphichiral knot is the sum of two squares (see also \cite{FriedlMillerPowell} for a partial generalization and \cite{MR2171693} for the converse). We prove the following strengthening of this determinant condition in the case that $K$ bounds an equivariant slice disk. 

\begin{theorem} \label{thm:det}
 If $K$ is an equivariantly slice strongly negative amphichiral knot, then $\mbox{det}(K)$ is the square of a sum of two squares.
%Let $(K,\sigma)$ be a strongly negative amphichiral knot. If $(K,\sigma)$ is equivariantly slice, then det$(K)$ is the square of a sum of two squares.
\end{theorem}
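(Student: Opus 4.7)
The plan is to pass to the double branched cover. Let $\pi : \tilde X \to B^4$ be the double cover of $B^4$ branched along the equivariant slice disk $D$. Then $\tilde X$ is a smooth rational homology $4$-ball with $\partial \tilde X = \Sigma(K)$, and because $\sigma'$ preserves $D$ it lifts to a self-diffeomorphism $\hat\sigma$ of $\tilde X$ restricting on the boundary to a lift $\tilde\sigma$ of $\sigma$. Since $\sigma'$ is orientation-reversing on $B^4$, both $\hat\sigma$ on $\tilde X$ and $\tilde\sigma$ on $\Sigma(K)$ are orientation-reversing; in particular the linking form $\lambda$ on $H_1(\Sigma(K))$ satisfies $\tilde\sigma^*\lambda = -\lambda$.

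The classical slice-disk argument then yields $|H_1(\tilde X)|^2 = |H_1(\Sigma(K))| = \det(K)$, so $\det(K) = m^2$ where $m := |H_1(\tilde X)|$; thus it suffices to show that $m$ is a sum of two squares. The strategy is to equip $H_1(\tilde X)$ with a non-degenerate symmetric $\mathbb{Q}/\mathbb{Z}$-valued bilinear form $\ell$ coming from Poincar\'e--Lefschetz duality on the rational homology $4$-ball $\tilde X$, then to show $\hat\sigma^*\ell = -\ell$ using that $\hat\sigma$ reverses the orientation of $\tilde X$. The algebraic form of Goeritz's classical argument -- that a finite abelian group carrying a non-degenerate symmetric $\mathbb{Q}/\mathbb{Z}$-pairing together with an automorphism negating the pairing has order a sum of two squares -- then implies $m = a^2 + b^2$, so $\det(K) = (a^2+b^2)^2$ as required.

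The main obstacle is producing the pairing $\ell$: on the first homology of a $4$-manifold with boundary, a canonical symmetric linking pairing is less standard than in the closed $3$-manifold case, and one must verify both its non-degeneracy and its sign behavior under orientation reversal. A cleaner alternative is to remain on $\Sigma(K)$: equivariance of $D$ forces the metabolizer $M := \ker(H_1(\Sigma(K)) \to H_1(\tilde X))$ of order $m$ to be $\tilde\sigma_*$-invariant, since $\hat\sigma$ is a self-map of $\tilde X$. The linking form $\lambda$ then descends to a perfect pairing $M \times H_1(\Sigma(K))/M \to \mathbb{Q}/\mathbb{Z}$ which intertwines, up to sign, the induced involutions on $M$ and on $H_1(\Sigma(K))/M \cong H_1(\tilde X)$. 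Running the Goeritz-type analysis $p$-primary component by $p$-primary component on this equivariant exact sequence -- decomposing $M$ and $H_1(\tilde X)$ into eigenspaces of $\tilde\sigma_*$ over $\mathbb{Z}[\tfrac{1}{2}]$ away from the prime $2$, and controlling the contribution at $p = 2$ by hand -- should then yield the sum-of-two-squares decomposition of $m$ directly, with the primes $p \equiv 3 \pmod 4$ dividing $m$ to even order coming out of the argument.
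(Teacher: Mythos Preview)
Your proposal has a genuine gap: you never identify or use the key structural fact that the lift $\tilde\sigma$ has order $4$, with $\tilde\sigma^{\,2}$ equal to the nontrivial deck transformation $\tau$. Since $\tau_* = -\mathrm{id}$ on $H_1(\Sigma(K);A)$ (a short transfer argument), this yields $(\tilde\sigma_*)^2 = -\mathrm{id}$, and it is this relation that drives the paper's proof: on the $\tilde\sigma_*$-invariant metabolizer $M$ (taken with $p$-adic coefficients for odd $p$) every nonzero $\tilde\sigma_*$-orbit then has size exactly $4$, so $|M|\equiv 1\pmod 4$; for $p\equiv 3\pmod 4$ this forces the $p$-part of $|M|$ to be an even power of $p$, and the sum-of-two-squares theorem finishes.

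By contrast, you treat $\tilde\sigma_*$ as an involution (you speak of ``the induced involutions on $M$ and on $H_1(\Sigma(K))/M$'') and hope to extract the conclusion from the weaker hypotheses that $\tilde\sigma_*$ negates the linking form and preserves $M$. These alone do not suffice. Take $H=(\mathbb{Z}/3)^2$ with the hyperbolic form $\lambda(e_1,e_2)=1/3$, $\lambda(e_i,e_i)=0$, Lagrangian $M=\langle e_1\rangle$, and $\phi(e_1)=e_1$, $\phi(e_2)=-e_2$; then $\phi^*\lambda=-\lambda$ and $\phi(M)=M$, yet $|M|=3$ is not a sum of two squares. So neither the $\pm 1$-eigenspace decomposition of an involution nor the induced perfect pairing $M\times H/M\to\mathbb{Q}/\mathbb{Z}$ gives the constraint you need. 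What rules out such behavior in the actual geometric situation is precisely $(\tilde\sigma_*)^2=-1$: an endomorphism of $\mathbb{Z}/3$ squaring to $-1$ would require $-1$ to be a square mod $3$. Your first route (a symmetric linking form on $H_1(\tilde X)$) runs into the same problem even if you could construct $\ell$, since merely having $\hat\sigma^*\ell=-\ell$ is, as the counterexample shows, not enough without the order-$4$ input.
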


Theorem \ref{thm:det} shows that the six slice strongly negative amphichiral knots $10_{123}$, $12a_{435}$, $12a_{990}$, $12a_{1019}$, $12a_{1225}$, and $12n_{706}$ are not equivariantly slice.

%Our second obstruction, which applies to alternating knots, comes from studying the $\spinc$-structures on the double cover $W$ of $B^4$ branched over a slice disk. Specifically, the subset of $\spinc$-structures on $Y = \Sigma(S^3,K)$ which extend over $W$ must be $\sigma$-invariant if the slice disk is equivariant; see Proposition \ref{prop:spincinvariance}. To take advantage of this, we take the positive definite checkerboard surface $S$ from an alternating diagram for $K$ and consider the double cover $X$ of $B^4$ branched over $S$ (isotoped to be properly embedded in $B^4$). The manifold $X \cup_Y -W$ obtained by gluing $X$ and $W$ along their common boundary $Y$ is then a positive definite 4-manifold to which we can apply Donaldson's theorem. The result is that we must have a lattice embedding $(H_2(X),Q) \to (\mathbb{Z}^n,Id$), where $Q$ is the intersection form on $X$, and we can enumerate such lattice embeddings combinatorially. 

Our second obstruction, which applies to knots with an alternating strongly negative amphichiral diagram, comes from applying Donaldson's theorem \cite{Donaldson}. Donaldson's theorem can often be used to obstruct the existence of slice disks (see for example \cite{MR2302495}). More recently, it has also been used to obstruct equivariant slice disks for strongly invertible and periodic knots \cite{BoyleIssa}. A key ingredient in that obstruction is the existence of an invariant definite spanning surface for the knot. In contrast, strongly negative amphichiral knots do not bound invariant spanning surfaces in $S^3$. Instead, we use the fact that if $K$ bounds an equivariant slice disk $D$ then the subset $S$ of $\spinc$-structures on the double branched cover $Y = \Sigma(S^3,K)$ that extend over $\Sigma(B^4,D)$ is $\widetilde{\sigma}$-invariant, where $\widetilde{\sigma}$ is a lift of the symmetry $\sigma$ to $Y$; see Proposition \ref{prop:spincinvariance} and the discussion following its proof. Donaldson's theorem can be used to obtain restrictions on $S$. Using the interplay between the pair of checkerboard surfaces exchanged by the symmetry, we carefully keep track of $\spinc$-structures, allowing us to compute the $\widetilde{\sigma}$-action on $\spinc(Y)$. This results in a nice combinatorial description of the $\widetilde{\sigma}$-action on $\spinc(Y)$ in terms of the oriented incidence matrices of the checkerboard graphs for an alternating symmetric diagram. Specifically, we prove the following theorem. 

%Since the $\spinc$-structures on $Y$ can be described in terms of characteristic vectors $\mbox{Char}(H_2(X))$ (see section \ref{sec:spinc} for a definition) in $(H_2(X),Q)$, we get a list of possible subsets of $\spinc$-structures on $Y$ which extend over $W$. 

%One final difficulty is computing the induced action of $\sigma$ on the $\spinc$-structures of $Y = \Sigma(S^3,K)$ so that we can check if the subsets provided by Proposition \ref{prop:spinc_metabolizers} are invariant. To do so, we first define an orientation-preserving symmetry $\widetilde{\sigma}_{swap}$ on $S^4$ which restricts to $\sigma$ on the equatorial $S^3$; see Definition \ref{def:sigmaswap}. We then lift $\widetilde{\sigma}_{swap}$ to a symmetry on the double cover of $S^4$ branched over the union of the checkerboard surfaces $F_+$ and $F_-$ of an equivariant diagram; see Proposition \ref{prop:snalift}. 

%This proposition is useful because the maps $r$,$r_+$,$r_-$, and $\widetilde{\sigma}_{swap}$ are all readily computed combinatorially; see Proposition \ref{prop:dual_embeddings}. 

\begin{theorem} \label{thm:combinatorial_thing}
  Let $(K,\sigma)$ be a knot with an alternating strongly negative amphichiral diagram and let $Y = \Sigma(S^3,K)$. Let $F_{\pm}$ be the positive and negative definite checkerboard surfaces, let $J_{\pm}^*$ be compatible oriented incidence matrices with a row removed\footnote{See Definition \ref{def:incidence}.} for the checkerboard graphs of $F_{\pm}$, and let $A_{\pm} = J_{\pm}^*  (J_{\pm}^*)^{\mathsf{T}} \in M_n(\mathbb{Z})$ be the Goeritz matrices for $F_{\pm}$. Then there is a lift $\widetilde{\sigma}\colon Y \to Y$ for which the map $\widetilde{\sigma}^*\colon\spinc(Y) \to \spinc(Y)$ is determined by 
\[
\widetilde{\sigma}^*[J_+^* v] = [J_-^* v]\ \mbox{ for all } v \in \mathbb{Z}^{2n} \mbox{ with } v \equiv (1,1,\dots, 1)^{\mathsf{T}} \ (\textup{mod 2}),
\]
where $\spinc(Y) \cong \textup{Char}(\mathbb{Z}^n,A_+)/\textup{im}(2A_+)$. Moreover, if $K$ is equivariantly slice then there is a lattice embedding $A: (\mathbb{Z}^n,A_+) \to (\mathbb{Z}^n,\textup{Id})$ such that 
\[
S = \{[u]\in\spinc(Y) \mid u = A^{\mathsf{T}}v\mbox{ for some } v \in \mathbb{Z}^n \mbox{ with } v \equiv (1,1,\dots, 1)^{\mathsf{T}} \ (\textup{mod 2}) \}
\]
is $\widetilde{\sigma}^*$-invariant.

%Moreover, if $K$ is equivariantly slice then there is a lattice embedding $H_2(\Sigma(B^4,F_+)) \to \mathbb{Z}^n$ such that the induced $\spinc(Y)$-metabolizer is invariant under this action. 
\end{theorem}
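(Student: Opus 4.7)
The proof splits naturally into two independent arguments: the combinatorial description of $\widetilde{\sigma}^*$, and the Donaldson-type obstruction giving the embedding $A$ together with the characterization of $S$. I would set up both simultaneously by considering the 4-manifolds $X_\pm$ obtained as the double cover of $B^4$ branched over the checkerboard surfaces $F_\pm$ pushed into the interior. Standard Goeritz theory gives that $X_\pm$ have intersection forms $\pm A_\pm$ on $H_2(X_\pm) = \mathbb{Z}^n$ and common boundary $Y$, and the parametrization $\spinc(Y) \cong \textup{Char}(\mathbb{Z}^n, A_+)/\textup{im}(2A_+)$ arises from restricting characteristic cohomology classes of $X_+$. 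A routine computation with $A_+ = J_+^*(J_+^*)^{\mathsf{T}}$ shows that for every $v \in \mathbb{Z}^{2n}$ with all entries odd the vector $J_+^* v$ is characteristic for $A_+$, and that every characteristic vector arises this way; the analogous statement holds with $+$ replaced by $-$.

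For the $\widetilde{\sigma}^*$ formula, I would use the orientation-reversing involution $\sigma$ to produce an orientation-reversing diffeomorphism $X_+ \to X_-$ (since $\sigma$ swaps the black and white checkerboard regions and hence the two pushed-in surfaces) whose boundary restriction is the lift $\widetilde{\sigma}$. This identifies the two Goeritz parametrizations on $Y$. The essential bookkeeping step, and the main obstacle, is the correct choice of ``compatible'' oriented incidence matrices from Definition \ref{def:incidence}: the labeling of crossings and the orientations of edges in the checkerboard graphs must be picked so that the induced map on edge-indexing vectors is the identity on $\mathbb{Z}^{2n}$. Once this compatibility is established, the formula $\widetilde{\sigma}^*[J_+^* v] = [J_-^* v]$ follows by tracing a characteristic vector through the diffeomorphism $X_+ \to X_-$ and reducing in $\spinc(Y)$.

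For the equivariant-slice conclusion, let $D$ be a $\sigma'$-invariant slice disk and let $W = \Sigma(B^4, D)$. The symmetry $\sigma'$ lifts to an involution of $W$ extending $\widetilde{\sigma}$, so by Proposition \ref{prop:spincinvariance} the set $S = \textup{im}(\spinc(W) \to \spinc(Y))$ is $\widetilde{\sigma}^*$-invariant. To get the embedding and the explicit description of $S$, form the closed 4-manifold $Z = X_+ \cup_Y (-W)$. Since $D$ is slice, $W$ is a rational homology 4-ball, so $Z$ is closed, smooth, and positive definite of rank $n$. Donaldson's theorem then identifies $(H_2(Z)/\textup{torsion}, Q_Z) \cong (\mathbb{Z}^n, \textup{Id})$, and the inclusion $X_+ \hookrightarrow Z$ gives the lattice embedding $A\colon (\mathbb{Z}^n, A_+) \to (\mathbb{Z}^n, \textup{Id})$ with $A^{\mathsf{T}} A = A_+$.

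Finally, I would show that $S = \textup{im}(\spinc(Z) \to \spinc(Y))$. Because every element of $\spinc(Y)$ already extends over $X_+$, the set $S$ of $\spinc$-structures extending over $W$ coincides with those extending over the whole gluing $Z$. Characteristic vectors for $(\mathbb{Z}^n, \textup{Id})$ are precisely the $v \in \mathbb{Z}^n$ with $v \equiv (1, 1, \ldots, 1) \pmod{2}$, and restriction from $\spinc(Z)$ to $\spinc(X_+)$ sends $v$ to $A^{\mathsf{T}} v$. This yields exactly the combinatorial description of $S$ stated in the theorem.
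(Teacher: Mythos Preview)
Your treatment of the Donaldson/equivariant-slice half is essentially the paper's own argument (Proposition~\ref{prop:spinc_metabolizers}): form $Z = X_+ \cup_Y (-W)$, apply Donaldson's theorem, and read off $S$ as the image of $\spinc(Z) \to \spinc(Y)$. One minor correction: the lift of $\sigma'$ to $W$ has order four, not two, since $\widetilde{\sigma}^2$ equals the deck transformation $\tau$ (Corollary~\ref{cor:snalift}); this does not affect the invariance conclusion but is worth getting right.

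The genuine gap is in the $\widetilde{\sigma}^*$ formula. Your diffeomorphism $\Phi\colon X_+ \to X_-$ restricting to $\widetilde{\sigma}$, together with the choice of compatible vertex bases making $\Phi^* = \pm\mathrm{Id}$, yields only
\[
\widetilde{\sigma}^*\bigl([w]_{\text{via }X_-}\bigr) \;=\; [\pm w]_{\text{via }X_+}
\]
for $w \in \textup{Char}(A_-)$. To get the stated formula you must still identify \emph{which} class $[w]_{\text{via }X_-}$ equals a given $[u]_{\text{via }X_+}$ as an element of $\spinc(Y)$. That change-of-parametrization between the two fillings is precisely what the incidence matrices are meant to encode, and it does not follow formally from the algebraic factorizations $A_\pm = J_\pm^*(J_\pm^*)^{\mathsf{T}}$; many such factorizations exist, and nothing in your outline ties the particular one coming from the checkerboard graphs to the geometry of the gluing.

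The paper supplies this missing link by passing to the closed $4$-manifold $\Sigma(S^4, F_+ \cup F_-) = X_+ \cup_Y (-X_-)$. Proposition~\ref{prop:dual_embeddings} constructs an explicit orthonormal basis $\{e_i\}$ of embedded spheres for $H_2(\Sigma(S^4, F_+ \cup F_-)) \cong \mathbb{Z}^{2n}$, one per crossing, and verifies by a direct intersection-number computation that the inclusion-induced maps $H_2(\pm X_\pm) \to \mathbb{Z}^{2n}$ are given by $(J_\pm^*)^{\mathsf{T}}$. This is the substantive step: it shows $[J_+^* v]_{\text{via }X_+}$ and $[J_-^* v]_{\text{via }-X_-}$ are both restrictions of the \emph{same} $v \in \spinc(\Sigma(S^4, F_+ \cup F_-))$, hence represent the same $\spinc$-structure on $Y$. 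Combined with Proposition~\ref{prop:GLhomologymap} (which plays the role of your $\Phi^* = \pm\mathrm{Id}$) and the diagram chase of Proposition~\ref{prop:spinc_compute}, the formula follows. Your sketch invokes ``edge-indexing vectors'' in $\mathbb{Z}^{2n}$ without ever giving that lattice a geometric home, so the key identification is asserted rather than proved.
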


Using Theorem \ref{thm:combinatorial_thing} we show that $12a_{1105}$ and $12a_{1202}$ are not equivariantly slice (see Section \ref{sec:example}), even though they satisfy the determinant condition in Theorem \ref{thm:det} as det($12a_{1105}) = 17^2 = (4^2 + 1^2)^2$ and det($12a_{1202}) = 13^2 = (3^2 + 2^2)^2$. Of the slice strongly negative amphichiral knots with 12 or fewer crossings, this leaves only $12a_{458}, 12a_{477},$ and $12a_{887}$ for which equivariant sliceness is unknown. See Section \ref{sec:sliceSNAKStable} for a table of equivariant knot diagrams for these knots. 

Our analysis of the $\widetilde{\sigma}$-action on $\spinc(\Sigma(S^3,K))$ also leads us to the following obstruction to strongly negative amphichirality in terms of Heegaard Floer correction terms.

\begin{theorem} \label{thm:d-invariants}
Let $(K,\sigma)$ be a strongly negative amphichiral knot and let $\widetilde{\sigma}$ be a lift of $\sigma$ to $Y := \Sigma(S^3,K)$ (see Proposition \ref{prop:snalift}). Then the orbits of $\spinc(Y)$ under the action of $\widetilde{\sigma}$ take the following form.
\begin{enumerate}
\item There is exactly one orbit $\{\mathfrak{s}_0\}$ of order 1 with $d(Y,\mathfrak{s}_0) = 0$.
\item All other orbits $\{\mathfrak{s},\widetilde{\sigma}(\mathfrak{s}),\widetilde{\sigma}^2(\mathfrak{s}),\widetilde{\sigma}^3(\mathfrak{s})\}$ have order 4 and
\[
d(Y,\widetilde{\sigma}^i(\mathfrak{s})) = (-1)^i \cdot d(Y,\mathfrak{s}) \mbox{ for all } i.
\]
\end{enumerate}
\end{theorem}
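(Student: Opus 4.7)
The plan is to exploit two structural properties of the lift $\widetilde{\sigma}$ coming from Proposition \ref{prop:snalift}: (i) $\widetilde{\sigma}$ is orientation-reversing on $Y$, since $\sigma$ reverses orientation on $S^3$ and the branched covering map is orientation-preserving; and (ii) $\widetilde{\sigma}^2 = \tau$, where $\tau$ is the covering involution of $Y \to S^3$. The second property can be checked in local coordinates around a fixed point $p \in K$ of $\sigma$: since $\sigma$ is an orientation-reversing involution of $S^3$ with isolated fixed points, it is locally conjugate to $-\textup{id}$, and in a complex coordinate transverse to $K$ the lift acts by multiplication by $i$, so $\widetilde{\sigma}^2$ matches the sheet-swap $\tau$.

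Using (i), the orientation-reversing behavior of the Heegaard Floer correction terms gives the key identity
\[
d(Y, \widetilde{\sigma}(\mathfrak{s})) = -d(Y, \mathfrak{s}) \quad \text{for every } \mathfrak{s} \in \spinc(Y).
\]
Iterating immediately yields the sign pattern $d(Y, \widetilde{\sigma}^i(\mathfrak{s})) = (-1)^i d(Y,\mathfrak{s})$ asserted in (2). Using (ii), $\widetilde{\sigma}^4 = \tau^2 = \textup{id}$, so every orbit has size $1$, $2$, or $4$. An orbit of size $1$ or $2$ is characterized by $\widetilde{\sigma}^2(\mathfrak{s}) = \tau(\mathfrak{s}) = \mathfrak{s}$, so it remains to classify $\tau$-fixed $\spinc$-structures.

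For this, I would use that $\det(K) = |H_1(Y;\mathbb{Z})|$ is odd, so $H^2(Y;\mathbb{Z})$ has odd order and conjugation on the torsor $\spinc(Y)$ has a unique fixed point, namely the $\spinc$-structure $\mathfrak{s}_0$ associated to the canonical spin structure on $Y$. Since $\tau$ acts as $-\textup{id}$ on $H_1(Y;\mathbb{Z})$ for a double branched cover of a knot in $S^3$, the difference between $\tau(\mathfrak{s})$ and $\overline{\mathfrak{s}}$ is $2$-torsion in $H^2(Y;\mathbb{Z})$, and hence vanishes; thus the $\tau$-action on $\spinc(Y)$ agrees with conjugation, and $\mathfrak{s}_0$ is the unique $\tau$-fixed $\spinc$-structure. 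Because pullback of $\spinc$-structures commutes with conjugation, $\widetilde{\sigma}(\mathfrak{s}_0)$ is self-conjugate, hence equals $\mathfrak{s}_0$. Therefore $\{\mathfrak{s}_0\}$ is the unique orbit of size $1$, no orbit has size $2$, and the key identity applied at $\mathfrak{s}_0$ forces $d(Y,\mathfrak{s}_0) = -d(Y,\mathfrak{s}_0) = 0$, establishing (1).

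The main obstacle is establishing the two structural properties of $\widetilde{\sigma}$ in Proposition \ref{prop:snalift}, particularly the fact that $\widetilde{\sigma}^2 = \tau$ rather than $\textup{id}$, which requires the local-coordinate analysis at the fixed points of $\sigma$ on $K$. Everything after that is orbit-counting combined with standard properties of the $d$-invariant and the $\tau$-action on $\spinc$-structures of double branched covers of knots.
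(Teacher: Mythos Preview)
Your proposal is correct and reaches the same conclusion as the paper, using the same structural inputs ($\widetilde{\sigma}$ orientation-reversing, $\widetilde{\sigma}^2=\tau$, $\tau_*=-\mathrm{id}$ on $H_1$, and $|H_1(Y)|=\det(K)$ odd), but the argument for the uniqueness of the short orbit is packaged differently.

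The paper fixes a spanning surface $F\subset S^3$, sets $X=\Sigma(B^4,F)$, and identifies $\spinc(Y)$ with $\mathrm{Char}(\mathbb{Z}^n,Q)/\mathrm{im}(2Q)$. It then argues concretely: if $[v_1],[v_2]$ are fixed by $(\widetilde{\sigma}^*)^2$, then $[v_j]=[-v_j]$ forces $v_j=Qw_j$, and since $\det Q=\det K$ is odd, $Q$ is invertible over $\mathbb{Z}/2$, giving $w_1\equiv w_2\pmod{2}$ and hence $[v_1]=[v_2]$. Finally, the paper appeals to parity (an odd number of $\spinc$-structures) to conclude that this unique $\tau^*$-fixed class actually has orbit size $1$ rather than $2$.

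You instead work intrinsically with the $H^2(Y)$-torsor structure: since $\tau^*$ and conjugation both act by $-1$ on $H^2(Y)$ and $c_1(\tau^*\mathfrak{s})=c_1(\overline{\mathfrak{s}})$, the difference $\tau^*\mathfrak{s}-\overline{\mathfrak{s}}$ is $2$-torsion, hence zero; so $\tau^*$ coincides with conjugation, which has the spin structure $\mathfrak{s}_0$ as its unique fixed point. You then bypass the parity step by directly checking $\widetilde{\sigma}^*(\mathfrak{s}_0)=\mathfrak{s}_0$ via the fact that pullback commutes with conjugation. This is a cleaner, coordinate-free version of the same idea (the paper's linear-algebra step is exactly this torsor argument written in the basis coming from $F$), and it has the minor advantage of not needing the auxiliary $4$-manifold $X$ or the final parity count.
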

For example, the figure eight knot $4_1$ is strongly negative amphichiral and $\Sigma(S^3,4_1) = L(5,2)$, which has correction terms $\{0,2/5,-2/5,2/5,-2/5\}$. We checked that for all $2$-bridge knots with 12 or fewer crossings the $d$-invariants have this structure precisely when the knot is strongly negative amphichiral, leading us to the following conjecture.

\begin{conjecture}\label{conj:2-bridge}
Let $p,q \in \mathbb{N}$ with $p$ odd and $(p,q) = 1$. The following are equivalent:
\begin{enumerate}
\item \label{item:one} The Heegaard Floer correction terms of the lens space $L(p,q)$ can be partitioned into multisets, each of the form $\{r,-r,r,-r\}$ for some $r \in \mathbb{Q}$, and a single set $\{0\}$.
\item \label{item:two} The 2-bridge knot $K(p/q)$ is amphichiral.
\item \label{item:three} There is an orientation-reversing self-diffeomorphism of $L(p,q)$.
\item \label{item:four} $q^2 \equiv -1 \ (\textup{mod } p)$.
\end{enumerate}
\end{conjecture}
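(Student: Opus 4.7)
My plan is to prove the cycle $(4) \Rightarrow (2) \Rightarrow (1) \Rightarrow (4)$ together with $(3) \Leftrightarrow (4)$, treating the arithmetic condition (4) as the hub.

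For $(2) \Leftrightarrow (4)$ I would use Schubert's classification of 2-bridge knots: $K(p/q) \cong K(p/q')$ iff $q \equiv (q')^{\pm 1} \pmod p$. Since $mK(p/q) = K(p/-q)$, amphichirality is the statement $-q \equiv q^{\pm 1} \pmod p$; the case $-q \equiv q$ is impossible for $p$ odd and $(p,q)=1$, so $q^2 \equiv -1 \pmod p$. Similarly $(3) \Leftrightarrow (4)$ follows from the Reidemeister--Brody classification of lens spaces together with $-L(p,q) \cong L(p,-q)$. To prove $(2) \Rightarrow (1)$, I would first observe that every amphichiral 2-bridge knot admits a strongly negative amphichiral symmetry: in the standard 4-plat presentation the amphichiral involution can be chosen as a $\pi$-rotation composed with a reflection so that its two fixed points both lie on $K(p/q)$. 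Then Theorem \ref{thm:d-invariants} applied to $(K(p/q), \sigma)$ with double branched cover $L(p,q)$ yields exactly the multiset structure in (1).

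The main obstacle is $(1) \Rightarrow (4)$, which is genuinely the content of the conjecture. The plan is to use the Ozsv\'ath--Szab\'o recursion
\[
d(L(p,q), i) \;=\; \frac{(2i+1-p-q)^2 - pq}{4pq} \;-\; d(L(q, p \bmod q),\, j(i))
\]
to express $d(L(p,q), \cdot)$ as an explicit quadratic quasi-polynomial in $i$ modulo Dedekind-sum corrections. Condition (1) immediately forces $p \equiv 1 \pmod 4$, since the count of non-zero $d$-invariants must be divisible by $4$, and this is already the necessary condition for $-1$ to be a quadratic residue modulo $p$. To extract the full condition $q^2 \equiv -1 \pmod p$, I would try to promote the multiset pairing in (1) to an actual involution $\tau \colon \mathbb{Z}/p \to \mathbb{Z}/p$ with $d(L(p,q), \tau(i)) = -d(L(p,q), i)$, and use rigidity of the $d$-invariant formula to show $\tau$ must be affine; reconstructing an orientation-reversing self-diffeomorphism of $L(p,q)$ realizing $\tau$ then closes the cycle via $(3) \Rightarrow (4)$.

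The hardest step is precisely this reconstruction: (1) is a priori only a multiset condition, whereas building an orientation-reversing diffeomorphism requires an algebraic symmetry of $\spinc(L(p,q))$ compatible with the affine structure. A fallback is to treat (1) as a family of polynomial identities in $(p,q)$ via Dedekind reciprocity and to rule out all pairs $(p,q)$ with $q^2 \not\equiv -1 \pmod p$ by modular and growth estimates. I expect the conjecture will ultimately require a number-theoretic ingredient that is not purely Heegaard Floer theoretic.
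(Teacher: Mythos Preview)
The statement you are attempting to prove is stated in the paper as a \emph{conjecture}, not a theorem. The paper does not provide a proof. Immediately following the conjecture, the authors explain exactly what is known: the equivalences $(2) \Leftrightarrow (3) \Leftrightarrow (4)$ are classical (they cite Schubert's classification of 2-bridge knots and the diffeomorphism classification of lens spaces), and the implication $(2) \Rightarrow (1)$ follows from their Theorem~\ref{thm:d-invariants} together with the fact that an amphichiral 2-bridge knot is automatically strongly negative amphichiral. They then state explicitly that the conjecture is equivalent to showing $(1)$ implies any of the other conditions, and leave this open.

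Your treatment of the known implications is essentially identical to the paper's: you invoke Schubert and Reidemeister--Brody for $(2) \Leftrightarrow (4)$ and $(3) \Leftrightarrow (4)$, and you use Theorem~\ref{thm:d-invariants} for $(2) \Rightarrow (1)$. So on the parts the paper actually establishes, your proposal agrees with it.

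The gap is entirely in $(1) \Rightarrow (4)$, and you correctly identify this as the crux. However, your sketch there is not a proof but a strategy, and it has a real obstruction you partially acknowledge: condition $(1)$ is purely a multiset condition on the values $d(L(p,q),\mathfrak{s})$, while the argument you want to run requires an involution on $\spinc(L(p,q)) \cong \mathbb{Z}/p$ that is affine in the torsor structure. There is no mechanism offered for promoting a multiset pairing to an affine involution; distinct $\spinc$-structures can share the same $d$-invariant, so the pairing is not even canonically defined as a map. Your fallback via Dedekind reciprocity and growth estimates is not fleshed out either. In short, you have correctly located the open content of the conjecture but have not closed it, and neither has the paper.
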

We note that \eqref{item:two}, \eqref{item:three}, and \eqref{item:four} are known to be equivalent (see for example \cite[Section 4]{MR2171693} and \cite[Theorem 3]{MR710104}). Theorem \ref{thm:d-invariants} shows that \eqref{item:two} implies \eqref{item:one} since $\Sigma(S^3,K(p/q)) = L(p,q)$ and a 2-bridge knot is amphichiral if and only if it is strongly negative amphichiral. Thus Conjecture \ref{conj:2-bridge} is equivalent to showing that \eqref{item:one} implies any of the other conditions.

\subsection{Acknowledgments}
We thank Liam Watson for his encouragement, support, and interest in this project, and Adam Levine for pointing out a simple proof of Lemma \ref{lemma:tauhomology}.

\section{Lifting the action to the double branched cover}

In this section we show that you can lift the strongly negative amphichiral involution $\sigma$ on $S^3$ to the double branched cover $\Sigma(S^3,K)$. Since we are interested in equivariant slice disks for $K$, we also show that this lift $\widetilde{\sigma}$ can be extended to $\Sigma(B^4,S)$ for any equivariant surface $S \subset B^4$ with $\partial S = K$. Specifically, we have the following proposition which is similar to \cite[Proposition 12]{BoyleIssa}. However in our situation there are no fixed points disjoint from the branch set; the amphichiral involution lifts to an order 4 symmetry on the double branched cover.

\begin{proposition} \label{prop:snalift}
Let $S \subset S^4$ be a closed, connected, smoothly embedded surface and let $\sigma:(S^4,S) \to (S^4,S)$ be a smooth involution with non-empty fixed-point set contained in $S$. Let $p: \Sigma(S^4,S) \to S^4$ be the projection map from the double branched cover and let $\tau: \Sigma(S^4,S) \to \Sigma(S^4,S)$ be the non-trivial deck transformation map. Then there is a lift $\widetilde{\sigma}:\Sigma(S^4,S) \to \Sigma(S^4,S)$ such that the following diagram commutes:

\begin{center}\begin{tikzcd}
  {\Sigma(S^4,S)} & {\Sigma(S^4,S)} \\
  {S^4} & {S^4}
  \arrow["{p}"', from=1-1, to=2-1]
  \arrow["{\widetilde{\sigma}}", from=1-1, to=1-2]
  \arrow["{p}", from=1-2, to=2-2]
  \arrow["{\sigma}"', from=2-1, to=2-2]
\end{tikzcd}.
\end{center}
Furthermore, $\widetilde{\sigma}^2 = \tau$ and there are exactly two such lifts, namely $\widetilde{\sigma}$ and $\widetilde{\sigma}^3$.
\end{proposition}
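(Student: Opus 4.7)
The plan is to proceed in three steps: existence of a lift, counting the lifts, and identifying $\widetilde{\sigma}^2$ via a local analysis at a fixed point. For existence, I would work on the unbranched cover $p\colon \Sigma(S^4,S) \setminus p^{-1}(S) \to S^4 \setminus S$, which corresponds to the kernel of the mod-$2$ linking number homomorphism $\pi_1(S^4 \setminus S) \to \mathbb{Z}/2$. Since $\sigma$ preserves $S$ setwise, $\sigma_*$ sends each meridian to a meridian and hence preserves this kernel. Covering space theory then produces a lift on the unbranched cover, which extends smoothly across the branch locus via the local model $w \mapsto w^2$. Any two lifts differ by a deck transformation, and the deck group of the double branched cover is $\{\mathrm{id},\tau\}$, so there are exactly two lifts of $\sigma$.

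The main step is to show $\widetilde{\sigma}^2 = \tau$. Since $\sigma^2 = \mathrm{id}$, $\widetilde{\sigma}^2$ is a lift of $\mathrm{id}$, hence $\widetilde{\sigma}^2 \in \{\mathrm{id}, \tau\}$. To distinguish the two possibilities I would pass to a fixed point $x_0 \in \operatorname{Fix}(\sigma) \subset S$ (non-empty by hypothesis) and apply Bochner's equivariant linearization theorem to choose smooth coordinates near $x_0$ identifying $\sigma$ with a linear involution $(u,v) \mapsto (Au, Bv)$ on $T_{x_0}S \oplus N_{x_0}S$. The hypothesis $\operatorname{Fix}(\sigma) \subset S$ forces the $+1$-eigenspace of $(A,B)$ to lie in $T_{x_0}S$, so $B$ has no $+1$-eigenvector; since $B$ is an order-$2$ linear map on $\mathbb{R}^2$ with no fixed vectors, $B = -I$.

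With $B = -I$, I would identify $N_{x_0}S \cong \mathbb{C}$ so that the local branched-cover map becomes $(u,w) \mapsto (u, w^2)$ and $\tau(u,w) = (u,-w)$. Any local lift of $\sigma$ then has the form $(u,w) \mapsto (Au, g(w))$ with $g(w)^2 = -w^2$, which, by continuity of $g(w)/w$ on a punctured disk, forces $g(w) = \pm i w$. A direct computation gives $\widetilde{\sigma}^2(u,w) = (u, (\pm i)^2 w) = (u,-w) = \tau(u,w)$ near the unique preimage $\widetilde{x}_0$ of $x_0$. Since $\widetilde{\sigma}^2$ is globally either $\mathrm{id}$ or $\tau$ and equals $\tau$ locally, we conclude $\widetilde{\sigma}^2 = \tau$ on all of $\Sigma(S^4,S)$; the second lift is then $\tau \circ \widetilde{\sigma} = \widetilde{\sigma}^3$.

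The main obstacle I anticipate is not topological but rather the bookkeeping around the smooth structure at the branch locus: one has to verify that the local model $(u,w) \mapsto (u, w^2)$ really describes the smooth structure on $\Sigma(S^4,S)$ near a branch point, and that Bochner's equivariant linearization can be arranged compatibly with a choice of complex coordinate $w$ on the normal disk to $S$ at $x_0$. Once this setup is in place, the identity $(\pm i)^2 = -1$ does all the work and shows that the symmetry must lift to an order-$4$ symmetry of the branched cover rather than to an involution.
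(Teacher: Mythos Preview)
Your proposal is correct and follows the same three-step strategy as the paper: lift on the complement via covering-space theory, extend over the branch locus, and then argue locally at a fixed point that the normal action of $\sigma$ is $-\mathrm{Id}$ (since $\operatorname{Fix}(\sigma)\subset S$) and hence lifts to an order-$4$ map. The only difference is in executing that last step: where you invoke Bochner linearization and solve $g(w)^2=-w^2$ in explicit coordinates, the paper simply notes that $\sigma$ acts as the free $\pi$-rotation on an equivariant meridian circle of a fixed point, so $\widetilde{\sigma}$ acts as a $\pi/2$-rotation on its circle preimage---this sidesteps exactly the smooth-structure and coordinate-compatibility bookkeeping you identified as the main obstacle.
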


\begin{proof}
Let $N(S)$ be an equivariant tubular neighborhood of $S$ and $E = S^4 \backslash N(S)$ be the surface exterior. Denote by $\widetilde{E}$ the double cover of $E$ corresponding to the kernel $G$ of $\pi_1(E) \to H_1(E;\mathbb{Z}/2\mathbb{Z})$. We also choose a basepoint $s \in E$ and lifts $\widetilde{s}, \widetilde{t} \in \widetilde{E}$ with $p(\widetilde{s}) = s$ and $p(\widetilde{t}) = \sigma(s)$.

Since $G$ is the unique index 2 subgroup of $\pi_1(E)$, it is a characteristic subgroup. Hence $G$ is also the image of $\pi_1(\sigma \circ p): \pi_1(\widetilde{E}) \to \pi_1(E)$. Then by the covering space lifting property, since Im$(\pi_1(\sigma \circ p)) \subseteq$ Im$(\pi_1(p))$, there is a unique map $\widetilde{\sigma}: (\widetilde{E},\widetilde{s}) \to (\widetilde{E},\widetilde{t})$ such that $p \circ \widetilde{\sigma} = \sigma \circ p$. We next observe that $\widetilde{\sigma}$ preserves the set of $S^1$ fibers on the $S^1$-bundle boundary of $\widetilde{E}$, and by extending this action over each $D^2$ fiber we can (smoothly) extend $\widetilde{\sigma}$ to the tubular neighborhood $p^{-1}(N(S)) \subset \Sigma(S^4,S)$ such that $p \circ \widetilde{\sigma} = \sigma \circ p$.

Finally, $p \circ \widetilde{\sigma} = \sigma \circ p$ implies that $p \circ \widetilde{\sigma}^2 = \sigma^2 \circ p = p$, so that $\widetilde{\sigma}^2$ is either the identity map, or else the non-trivial deck transformation $\tau$ on $\Sigma(B^4,S)$. Note that in either case, $\widetilde{\sigma}^4$ is the identity map. However, $\sigma$ acts by $\pi$-rotation on an equivariant meridian $\alpha$ of a fixed point of $\sigma$. Indeed, if $\sigma$ acted by reflection or identity on $\alpha$, then there would be fixed points disjoint from $S$. In the branched cover we then have that $\widetilde{\sigma}$ acts by $\pi/2$-rotation on $p^{-1}(\alpha)$. Thus $\widetilde{\sigma}$ has order 4 and $\widetilde{\sigma}^2 = \tau$, as desired. Finally, we note that there are exactly two lifts, $\widetilde{\sigma}$ and $\tau \circ \widetilde{\sigma} = \widetilde{\sigma}^3$; one for each choice of $\widetilde{t}$.
\end{proof}

\begin{corollary} \label{cor:snalift}
Let $(K,\sigma)$ be a strongly negative amphichiral knot with double branched cover $\Sigma(S^3,K)$. Let $S \subset B^4$ be a smooth properly embedded surface with boundary $K$ which is invariant under an extension of $\sigma$ to $B^4$ (which we again call $\sigma$). Then there is a lift $\widetilde{\sigma}:\Sigma(B^4,S) \to \Sigma(B^4,S)$ such that $\widetilde{\sigma}^2 = \tau$ (and hence $\widetilde{\sigma}^4 = \mbox{Id}$) and $p \circ \widetilde{\sigma} = \sigma \circ p$. In fact, there are exactly two such lifts namely $\widetilde{\sigma}$ and $\widetilde{\sigma}^3$. %In particular, restricting to the boundary gives a lift $\widetilde{\sigma}:\Sigma(S^3,K) \to \Sigma(S^3,K)$.
\end{corollary}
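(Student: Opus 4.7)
The plan is to reduce the corollary to Proposition \ref{prop:snalift} by doubling. Form the closed 4-manifold $S^4 = B^4 \cup_{S^3} B^4$ and the closed surface $\widehat{S} = S \cup_K S \subset S^4$. Using a $\sigma$-equivariant collar of $S^3$ in $B^4$ on which $\sigma$ acts as a product, the given involution $\sigma \colon B^4 \to B^4$ doubles to a smooth involution $\widehat{\sigma} \colon S^4 \to S^4$ preserving $\widehat{S}$. The two fixed points of $\sigma$ on $K$ are fixed points of $\widehat{\sigma}$ lying on $\widehat{S}$, so the non-emptiness condition in the proposition is satisfied.

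Applying Proposition \ref{prop:snalift} to $(S^4, \widehat{S}, \widehat{\sigma})$ yields a lift $\widetilde{\widehat{\sigma}} \colon \Sigma(S^4, \widehat{S}) \to \Sigma(S^4, \widehat{S})$ with $\widetilde{\widehat{\sigma}}^2 = \widehat{\tau}$ and $p \circ \widetilde{\widehat{\sigma}} = \widehat{\sigma} \circ p$, unique up to composition with $\widehat{\tau}$. Next I would observe that $\Sigma(S^4, \widehat{S})$ is the double of $\Sigma(B^4, S)$ along $\Sigma(S^3, K)$, and that since $\widehat{\sigma}$ sends each $B^4$ half to itself, the lift $\widetilde{\widehat{\sigma}}$ must preserve each $\Sigma(B^4, S)$ half: for $x \in \Sigma(B^4, S)$, one has $p(\widetilde{\widehat{\sigma}}(x)) = \widehat{\sigma}(p(x)) \in B^4$, hence $\widetilde{\widehat{\sigma}}(x) \in p^{-1}(B^4) = \Sigma(B^4, S)$. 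Then I would define $\widetilde{\sigma} := \widetilde{\widehat{\sigma}}|_{\Sigma(B^4, S)}$; the relations $\widetilde{\sigma}^2 = \tau$, $\widetilde{\sigma}^4 = \mbox{Id}$, $p \circ \widetilde{\sigma} = \sigma \circ p$, and the existence of exactly two lifts, all descend from the corresponding facts for $\widetilde{\widehat{\sigma}}$.

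The main subtlety, and what I expect to be the principal obstacle, is that the chosen extension of $\sigma$ to $B^4$ may have fixed points in the interior lying off $S$, in which case $\widehat{\sigma}$ would have fixed points off $\widehat{S}$ and the hypothesis of the proposition as stated would fail. I would address this by noting that the proof of Proposition \ref{prop:snalift} only uses the existence of at least one fixed point on $\widehat{S}$ at which $\widehat{\sigma}$ acts by $\pi$-rotation on the meridian — to rule out $\widetilde{\widehat{\sigma}}^2 = \mbox{Id}$ and force $\widetilde{\widehat{\sigma}}^2 = \widehat{\tau}$ — and such fixed points are provided by the two fixed points on $K$. Alternatively, one can rerun the proposition's proof verbatim with $B^4$ in place of $S^4$, since the characteristic-subgroup lifting argument on the surface exterior is insensitive to whether the ambient 4-manifold is closed or compact with boundary.
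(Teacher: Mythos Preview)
Your approach is exactly the paper's: double $(B^4, S)$ to a closed pair in $S^4$, apply Proposition~\ref{prop:snalift}, and restrict back to $\Sigma(B^4,S)$. You are also right to flag the fixed-point subtlety that the paper's one-line proof elides --- the given extension of $\sigma$ to $B^4$ may have interior fixed points off $S$ --- and your resolution (the proof of the proposition only needs one fixed point on $\widehat{S}$ at which the meridian action is a $\pi$-rotation, and the two fixed points on $K$ supply this) is correct.
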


\begin{proof}
Take the double of $\Sigma(B^4,S)$ to obtain a closed connected surface in $S^4$, then apply Proposition \ref{prop:snalift} and restrict to $\Sigma(B^4,S)$.
\end{proof}

\begin{proposition}\label{prop:snalift3} Let $(K,\sigma)$ be a strongly negative amphichiral knot. Then there exist exactly two lifts of $\sigma$ to $\Sigma(S^3,K)$. Moreover, each such lift $\widetilde{\sigma}$ has $\widetilde{\sigma}^2 = \tau$, where $\tau : \Sigma(S^3,K) \rightarrow \Sigma(S^3,K)$ is the non-trivial deck transformation action, and hence $\widetilde{\sigma}$ has order 4.
  %A similar argument to that in Proposition \ref{prop:snalift} directly shows that there exist exactly two lifts of $\sigma$ to $\Sigma(S^3,K)$ and that they have order 4.
\end{proposition}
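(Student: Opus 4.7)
The plan is to transcribe the proof of Proposition \ref{prop:snalift} into the three-dimensional setting, since the covering-space argument given there does not actually use the ambient dimension being four. First I would consider the knot exterior $E = S^3 \setminus N(K)$ and its double cover $\widetilde{E}$. Since $H_1(E;\mathbb{Z}/2) \cong \mathbb{Z}/2$, the kernel $G$ of the mod-$2$ abelianization $\pi_1(E) \to \mathbb{Z}/2$ is the unique index-$2$ subgroup of $\pi_1(E)$, hence characteristic and thus $\sigma_*$-invariant. With a basepoint $s \in E$ and lifts $\widetilde{s}, \widetilde{t} \in \widetilde{E}$ of $s$ and $\sigma(s)$, the covering-space lifting property produces a unique map $\widetilde{\sigma}:(\widetilde{E},\widetilde{s}) \to (\widetilde{E},\widetilde{t})$ with $p \circ \widetilde{\sigma} = \sigma \circ p$, and the two choices of $\widetilde{t}$ yield exactly two such lifts.

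I would then extend each lift over the branch locus in the usual way: $p^{-1}(N(K))$ is a solid torus fibred by disks over the core $p^{-1}(K) \cong S^1$, and the action of $\widetilde{\sigma}$ on its boundary torus permutes the circle fibers; extending radially over each $D^2$-fiber gives a smooth extension to all of $\Sigma(S^3,K)$ still satisfying $p \circ \widetilde{\sigma} = \sigma \circ p$.

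To identify $\widetilde{\sigma}^2$, the equation $p \circ \widetilde{\sigma}^2 = p$ forces $\widetilde{\sigma}^2 \in \{\mathrm{Id}, \tau\}$. To rule out the identity, I would look at a fixed point $q \in K$: the differential $d\sigma_q$ is a linear involution on $T_q S^3 \cong \mathbb{R}^3$ preserving $T_q K$, and hence acts on the normal plane to $K$ at $q$ as a linear involution. Identity or a reflection there would both force additional fixed points of $\sigma$ off $K$, contradicting strong negative amphichirality; hence this linear involution must be $\pi$-rotation. Consequently a small meridian $\alpha$ of $K$ at $q$ is $\sigma$-invariant with $\sigma|_\alpha$ a rotation by $\pi$, and since $[\alpha]$ generates $H_1(E;\mathbb{Z}/2)$ its preimage $p^{-1}(\alpha)$ is a connected double cover on which $\widetilde{\sigma}$ acts by $\pi/2$-rotation. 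This forces $\widetilde{\sigma}$ to have order $4$, so $\widetilde{\sigma}^2 = \tau$, as claimed.

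The only real subtlety is this local linearization at a fixed point, which is precisely the same step that appears in the proof of Proposition \ref{prop:snalift}; everything else is a routine application of covering-space theory together with the structure of the unique index-$2$ subgroup of $\pi_1(E)$.
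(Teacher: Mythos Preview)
Your proposal is correct and takes essentially the same approach as the paper, which simply states that the argument is the same as that of Proposition~\ref{prop:snalift} (and notes alternatively that one may restrict the lifts of Corollary~\ref{cor:snalift} to the boundary). You have faithfully spelled out that transcription, including the local linearization at a fixed point that identifies the action on a meridian.
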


\begin{proof} The proof is essentially the same as that of Proposition \ref{prop:snalift}. It can also be obtained by restricting the lifts in Corollary \ref{cor:snalift} to the boundary $\Sigma(S^3, K)$.
\end{proof}

\section{A condition on the determinant}

It is implicit in the work of Goeritz \cite{MR1545364} that the determinant of an amphichiral knot can be written as the sum of two squares (see also \cite{MR2171693} for the converse and \cite{FriedlMillerPowell} for a partial generalization). In this section we reprove this theorem for strongly negative amphichiral knots, and show that the same condition must hold on the square root of the determinant if $K$ is equivariantly slice.

\begin{thm:det}
Let $(K,\sigma)$ be a strongly negative amphichiral knot. Then det$(K)$ is a sum of two squares. Furthermore, if $(K,\sigma)$ is equivariantly slice, then det$(K)$ is the square of a sum of two squares.
\end{thm:det}
Before we give a proof of the theorem, we need a few lemmas.

\begin{lemma} \label{lemma:tauhomology}
Let $A$ be an abelian group, and let $\Sigma(X,Y)$ be the double cover of a manifold $X$ (possibly with boundary) branched over a properly embedded submanifold $Y \subset X$ with non-trivial deck transformation involution $\tau:\Sigma(X,Y) \to \Sigma(X,Y)$. Suppose that $H_n(X;A) = 0$. Then $\tau_*(x) = -x$ for all $x \in H_n(\Sigma(X,Y);A)$.
\end{lemma}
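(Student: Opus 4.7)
The plan is to deduce the lemma from the classical \emph{transfer} formula for the double cover $p\colon\Sigma(X,Y)\to X$, suitably adapted to the branched setting. The central algebraic identity I would aim for is
\[
\mathrm{tr}\circ p_* \;=\; 1 + \tau_*\colon H_n(\Sigma(X,Y);A)\to H_n(\Sigma(X,Y);A),
\]
where $\mathrm{tr}\colon H_*(X;A)\to H_*(\Sigma(X,Y);A)$ is the transfer associated to $p$. Granting this identity, the conclusion is immediate: since $H_n(X;A)=0$, for any $x\in H_n(\Sigma(X,Y);A)$ we have $p_*(x)=0$, hence $(1+\tau_*)(x)=\mathrm{tr}(p_*(x))=0$, i.e.\ $\tau_*(x)=-x$.

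To build $\mathrm{tr}$, I would fix a $\tau$-equivariant CW (or smooth triangulation) structure on $\Sigma(X,Y)$ in which the branch set is a subcomplex and $\tau$ is cellular; this descends to a CW structure on $X$ with $Y$ as a subcomplex. For a cell $c$ of $X$, define
\[
\mathrm{tr}(c)\;=\;(1+\tau)(\widetilde c)\;\in\;C_*(\Sigma(X,Y)),
\]
where $\widetilde c$ is any cellular lift of $c$. The formula is symmetric in the choice of lift (when $c\not\subset Y$ the two lifts are exchanged by $\tau$; when $c\subset Y$ the unique lift is fixed by $\tau$), so $\mathrm{tr}$ is a well-defined chain map. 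By construction, $\mathrm{tr}\bigl(p_\#(\widetilde c)\bigr)=(1+\tau)(\widetilde c)$ at the chain level, and tensoring with $A$ and passing to homology yields the identity above.

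The only subtlety is the bookkeeping on cells contained in the branch set: there $\mathrm{tr}$ produces $2\widetilde c$ rather than a sum of two distinct lifts, but this is exactly what is required for $\mathrm{tr}\circ p_\# = 1+\tau$ to hold on the nose, since $(1+\tau)(\widetilde c)=2\widetilde c$ whenever $\tau$ fixes $\widetilde c$. I expect this to be the main (mild) obstacle; it can be sidestepped entirely by first applying the classical unbranched transfer to $p\colon\Sigma(X,Y)\setminus p^{-1}(N(Y))\to X\setminus N(Y)$ and then checking that the excised neighborhood $N(Y)$ and its preimage contribute compatibly to the long exact sequences of the pairs, using that $N(Y)$ deformation retracts onto $Y$ and $p^{-1}(N(Y))$ onto $p^{-1}(Y)$. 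Either approach delivers the key identity, from which the lemma follows in one line.
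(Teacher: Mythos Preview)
Your argument is correct and is essentially the same as the paper's: both use the transfer $T\colon H_n(X;A)\to H_n(\Sigma(X,Y);A)$ and the identity $T\circ p_*=1+\tau_*$, so that $H_n(X;A)=0$ forces $x+\tau_*(x)=0$. The paper simply quotes the transfer and this identity without further comment, whereas you spell out the chain-level construction and the branch-set bookkeeping; your extra care is not needed for the proof but does no harm.
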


\begin{proof}
The image of the transfer homomorphism $T: H_n(X;A) \to H_n(\Sigma(X,Y);A)$ is 0 since $H_n(X;A) = 0$. For any $x \in H_n(\Sigma(X,Y);A)$, we have that $x + \tau_*(x)$ is in the image of $T$ and hence is $0$. Thus $\tau_*(x) = -x$.

\end{proof}
%\begin{lemma} 
%Let $A$ be an abelian group and let $\tau: \Sigma(S^3,K) \to \Sigma(S^3,K)$ be the deck transformation involution. Then $\tau_*(x) = -x$ for all $x \in H_1(\Sigma(S^3,K);A)$.
%\end{lemma}

%\begin{proof}
%The image of the transfer homomorphism $T:H_1(S^3) \to H_1(\Sigma(S^3,K))$ is 0 since $H_1(S^3) = 0$. For any $x \in H_1(\Sigma(S^3,K))$, we have that $x + \tau_*(x)$ is in the image of $T$ and hence is $0$. Thus $x = -\tau_*(x)$.
%\end{proof}

When $(X,Y) = (S^3,K)$ in Lemma \ref{lemma:tauhomology}, observe that $\tau_*$ fixes only the identity element since $H_1(\Sigma(S^3,K);A)$ has no elements of order 2. 

\begin{lemma}(see \cite[Lemma 3]{MR900252}) \label{lemma:slicedisk}
Let $K$ be slice with slice disk $D \subset B^4$ and let $A$ be a torsion-free abelian group. If the image of $H_1(\Sigma(S^3,K);A)$ in $H_1(\Sigma(B^4,D);A)$ has order $m$, then $|H_1(\Sigma(S^3,K);A)| = m^2$.
\end{lemma}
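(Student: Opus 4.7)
The plan is to use Poincaré--Lefschetz duality on $W := \Sigma(B^4, D)$, whose boundary is $Y := \Sigma(S^3, K)$, to show that the inclusion-induced map $i_* : H_1(Y;A) \to H_1(W;A)$ has kernel and image of equal order. Since $|H_1(Y;A)| = |\ker i_*| \cdot |\operatorname{im} i_*|$, this immediately yields $|H_1(Y;A)| = m^2$.

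First I would establish that $W$ is a rational homology ball. Decomposing $B^4 = \nu(D) \cup (B^4 \setminus \nu(D))$ and applying Mayer--Vietoris with $\mathbb{Q}$-coefficients shows that $B^4 \setminus \nu(D)$ has the rational homology of a circle; passing to the double branched cover kills the generator of this $H_1$, so $H_k(W;\mathbb{Q}) = 0$ for $k > 0$. Consequently $H_1(W;\mathbb{Z})$ and $H_2(W;\mathbb{Z})$ are finite torsion groups, and $H_k(W;\mathbb{Z}) = 0$ for $k \geq 3$.

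Next I would prove the result for $A = \mathbb{Z}$ via the standard metabolizer argument. On the rational homology 3-sphere $Y$ there is a non-degenerate linking form $\lambda : H_1(Y;\mathbb{Z}) \times H_1(Y;\mathbb{Z}) \to \mathbb{Q}/\mathbb{Z}$. Setting $G := \ker i_*$, one combines the long exact sequence of $(W, Y)$ with Poincaré--Lefschetz duality $H_2(W, Y;\mathbb{Z}) \cong H^2(W;\mathbb{Z})$ and the universal coefficient theorem to identify $G$ with its orthogonal complement under $\lambda$, so that $G$ is a metabolizer. Non-degeneracy of $\lambda$ then forces $|G|^2 = |H_1(Y;\mathbb{Z})|$, equivalently $|\ker i_*| = |\operatorname{im} i_*|$, which settles the integer case.

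Finally, to extend to arbitrary torsion-free $A$, I would use that $H_1(Y;A) \cong H_1(Y;\mathbb{Z}) \otimes A$ (by universal coefficients, since $H_0(Y;\mathbb{Z})$ is free) and that $\operatorname{im}(i_*^A) = \operatorname{im}(i_*^{\mathbb{Z}}) \otimes A$ (by right-exactness of tensor product). The claim $|H_1(Y;A)| = |\operatorname{im} i_*^A|^2$ then reduces, via the classification of finite abelian groups together with the identity $\mathbb{Z}/n \otimes A \cong A/nA$, to the integer case already established. The main obstacle is verifying that $\ker i_*$ is a metabolizer for $\lambda$; this is a classical consequence of Poincaré--Lefschetz duality for a rational homology ball bounding a rational homology sphere, but the bookkeeping around universal coefficients needed to convert duality statements into equalities of orders requires care.
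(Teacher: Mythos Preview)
Your proposal is correct and follows the same Poincar\'e--Lefschetz duality/metabolizer argument that the paper invokes by citing \cite[Lemma~3]{MR900252}. The only difference is in the passage to general torsion-free $A$: the paper simply runs the cited argument with $A$-coefficients throughout, observing that no $\mathrm{Tor}$ terms appear in the universal coefficient theorem; you instead establish the integer case and then tensor up using flatness of $A$. Both work, though your final reduction (from $|H_1(Y;\mathbb{Z})|=m^2$ to $|H_1(Y;A)|=|\operatorname{im}i_*^A|^2$) is slightly more delicate than the phrase ``reduces \ldots\ to the integer case'' suggests. The cleanest way to close it is to note that the metabolizer argument in fact gives an \emph{isomorphism} $\ker i_*^{\mathbb{Z}}\cong\operatorname{im}i_*^{\mathbb{Z}}$ of abstract groups (since $H_1(Y;\mathbb{Z})/G\cong\operatorname{Hom}(G,\mathbb{Q}/\mathbb{Z})\cong G$), after which tensoring the short exact sequence $0\to\ker i_*^{\mathbb{Z}}\to H_1(Y;\mathbb{Z})\to\operatorname{im}i_*^{\mathbb{Z}}\to 0$ with the flat module $A$ immediately gives $|H_1(Y;A)|=|\operatorname{im}i_*^{\mathbb{Z}}\otimes A|^2=|\operatorname{im}i_*^A|^2$.
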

\begin{proof}
The proof is as in \cite[Lemma 3]{MR900252}, noting that since $A$ is torsion free the universal coefficient theorem does not introduce any unwanted Tor terms.
\end{proof}

\begin{lemma} \label{lemma:equikernel}
Suppose $(K,\sigma)$ has an equivariant slice disk $D$. Then the kernel of the map
\[
i_*: H_1(\Sigma(S^3,K);A) \to H_1(\Sigma(B^4,D);A),
\]
induced by inclusion, is invariant under the induced action of any lift $\widetilde{\sigma} : \Sigma(S^3, K) \rightarrow \Sigma(S^3, K)$ of $\sigma$ on homology.
% $\widetilde{\sigma} : \Sigma(B^4,D) \rightarrow \Sigma(B^4,D)$
%$(\widetilde{\sigma}|_{\Sigma(S^3,K)})_* : H_1(\Sigma(S^3,K);A) \rightarrow H_1(\Sigma(S^3,K);A)$.
\end{lemma}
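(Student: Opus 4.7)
The plan is to use functoriality of homology together with Corollary \ref{cor:snalift}, which lifts the involution of $B^4$ preserving the slice disk $D$ to the double branched cover $\Sigma(B^4,D)$. Since the inclusion $\Sigma(S^3,K) \hookrightarrow \Sigma(B^4,D)$ will intertwine these lifts, the induced map on $H_1$ will commute with the action, so its kernel will be invariant.

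In more detail, I would first observe that since $(K,\sigma)$ has an equivariant slice disk $D$, by Corollary \ref{cor:snalift} there exists a lift $\widetilde{\sigma}_B \colon \Sigma(B^4,D) \to \Sigma(B^4,D)$ of the involution on $B^4$, and the restriction of $\widetilde{\sigma}_B$ to the boundary $\partial \Sigma(B^4,D) = \Sigma(S^3,K)$ is a lift of $\sigma$. By Proposition \ref{prop:snalift3}, this restriction is one of the two lifts of $\sigma$ on $\Sigma(S^3,K)$, and the other restriction comes from the alternate lift $\widetilde{\sigma}_B^3 = \tau_B \circ \widetilde{\sigma}_B$ on $\Sigma(B^4,D)$. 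In particular, given any prescribed lift $\widetilde{\sigma}$ on $\Sigma(S^3,K)$, one of the two lifts on $\Sigma(B^4,D)$ restricts to it. Call this extension $\widetilde{\sigma}_B$.

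Then from the equivariant inclusion $i \colon \Sigma(S^3,K) \to \Sigma(B^4,D)$ (equivariant with respect to $\widetilde{\sigma}$ and $\widetilde{\sigma}_B$), functoriality of the homology functor $H_1(-;A)$ gives the commutative square $i_* \circ \widetilde{\sigma}_* = (\widetilde{\sigma}_B)_* \circ i_*$. Therefore, for any $x \in \ker(i_*)$,
\[
i_*(\widetilde{\sigma}_*(x)) = (\widetilde{\sigma}_B)_*(i_*(x)) = (\widetilde{\sigma}_B)_*(0) = 0,
\]
so $\widetilde{\sigma}_*(x) \in \ker(i_*)$, as required.

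There is no real obstacle here; the only thing to watch is the slight ambiguity in which lift one obtains, which is already resolved by the counting argument of Propositions \ref{prop:snalift} and \ref{prop:snalift3}. The statement is a standard consequence of equivariant functoriality once the lift of $\sigma'$ on $B^4$ is promoted to a lift on $\Sigma(B^4,D)$ extending a chosen lift on the boundary.
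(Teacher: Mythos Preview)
Your proof is correct and follows essentially the same approach as the paper's: both invoke Corollary \ref{cor:snalift} to extend the given lift $\widetilde{\sigma}$ over $\Sigma(B^4,D)$ and then use functoriality of homology to conclude that $\ker(i_*)$ is preserved. Your version is simply more explicit about why \emph{any} chosen lift on $\Sigma(S^3,K)$ arises as the restriction of one of the two lifts on $\Sigma(B^4,D)$, a point the paper leaves implicit.
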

\begin{proof}
Let $x \in $ ker$(i_*)$ so that $x$ is a boundary in $\Sigma(B^4,D)$. By Corollary \ref{cor:snalift}, there is an extension of the lift $\widetilde{\sigma}$ to $\Sigma(B^4, D)$. Hence $\widetilde{\sigma}_*(x)$ is also a boundary and hence contained in ker$(i_*)$.  
%Let $x \in $ ker$(i_*)$ so that $x$ is a boundary in $\Sigma(B^4,D)$. By Corollary \ref{cor:snalift}, there is a lift $\widetilde{\sigma} : \Sigma(B^4,D) \rightarrow \Sigma(B^4, D)$ extending the lift of $\sigma$ to $\Sigma(S^3, K)$. Then $\widetilde{\sigma}_*(x)$ is also a boundary and hence contained in ker$(i_*)$. 
\end{proof}

\begin{proof}[Proof of Theorem \ref{thm:det}]
By Proposition \ref{prop:snalift3}, $\sigma$ lifts to an order 4 action $\widetilde{\sigma}$ on $\Sigma(S^3,K)$ with $\widetilde{\sigma}^2 = \tau$. In particular, Lemma \ref{lemma:tauhomology} implies that all orbits of $\widetilde{\sigma}_*:H_1(\Sigma(S^3,K); A) \to H_1(\Sigma(S^3,K); A)$ have order 4, except the orbit consisting of the identity element. Taking coefficients $A$ as the $p$-adic integers $\mathbb{Z}_p$ for some prime $p$, we have 
\[
|H_1(\Sigma(S^3,K);\mathbb{Z}_p)| \equiv 1 \ (\mbox{mod } 4).
\]
% If $p \equiv 1$ or 2 (mod 4) then this condition is trivial, but
For $p \equiv 3$ (mod 4) this implies that $|H_1(\Sigma(S^3,K);\mathbb{Z}_p)|$ is an even power of $p$. However by the universal coefficient theorem, $H_1(\Sigma(S^3,K);\mathbb{Z}_p) \cong H_1(\Sigma(S^3,K);\mathbb{Z}) \otimes \mathbb{Z}_p$ and hence the prime decomposition of $|H_1(\Sigma(S^3,K);\mathbb{Z})| = \mbox{det}(K)$ contains an even power of $p$. By the sum of two squares theorem, we then have that det$(K)$ is the sum of two squares.

Now suppose that $(K,\sigma)$ has an equivariant slice disk $D \subset B^4$. By Lemma \ref{lemma:slicedisk} with $p$-adic coefficients, the kernel of $H_1(\Sigma(S^3,K);\mathbb{Z}_p) \to H_1(\Sigma(B^4,D);\mathbb{Z}_p)$ is a square-root order subgroup of $H_1(\Sigma(S^3,K);\mathbb{Z}_p)$, and by Lemma \ref{lemma:equikernel}, this subgroup is invariant under the action of $\widetilde{\sigma}_*$. In particular this subgroup must consist of the identity plus a (finite) collection of order 4 orbits so that 
\[
\sqrt{|H_1(\Sigma(S^3,K);\mathbb{Z}_p)|} \equiv 1 \ (\mbox{mod } 4).
\]
As above, we then have that $\sqrt{\mbox{det}(K)}$ can be written as the sum of two squares.
\end{proof}

\section{An obstruction on spinc structures}
\label{sec:spinc}
In this section we prove Theorem \ref{thm:combinatorial_thing} giving an obstruction to an alternating strongly negative amphichiral knot bounding an equivariant slice disk $D$ in $B^4$. We do so by considering $\spinc$-structures on the double branched cover and applying Donaldson's theorem. This obstruction is based on the following observation.

\begin{proposition} \label{prop:spincinvariance}
Let $\rho:Y \to Y$ be a diffeomorphism of a closed $3$-manifold $Y$. If $\rho$ extends to a diffeomorphism $\rho':X \to X$ of a 4-manifold $X$ with $\partial X = Y$, then
\[
\rho^*(\spinc(X)|_Y) = \spinc(X)|_Y,
\]
where $\rho^*: \spinc(Y) \to \spinc(Y)$ is the induced map on the $\spinc$-structures on the boundary.
\end{proposition}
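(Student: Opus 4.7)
The plan is to exploit the naturality of $\spinc$-structures under pullback by diffeomorphisms that preserve the boundary. The proposition is really a tautology once one has set up restriction as a natural operation, so the main task is to make this naturality precise and then chase an element through it in both directions.

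First I would recall the standard fact that for any smooth manifold $M$, a diffeomorphism $f\colon M \to M$ induces a pullback bijection $f^*\colon \spinc(M) \to \spinc(M)$ (via pullback of the underlying principal $\spinc$-bundle and Clifford structure). If in addition $f$ preserves a boundary component, restriction commutes with pullback, i.e.\
\[
(f^*\mathfrak{t})|_{\partial M} \;=\; (f|_{\partial M})^*\bigl(\mathfrak{t}|_{\partial M}\bigr)
\]
for every $\mathfrak{t} \in \spinc(M)$. I would state this as a small lemma (or cite it from a standard reference on $\spinc$-structures) since it is the only nontrivial ingredient.

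Next I would apply this to the given diffeomorphism $\rho'\colon X \to X$, which satisfies $\rho'|_Y = \rho$. For any $\mathfrak{t} \in \spinc(X)$, naturality gives
\[
\rho^*(\mathfrak{t}|_Y) \;=\; \bigl((\rho')^*\mathfrak{t}\bigr)\big|_Y,
\]
and the right-hand side manifestly lies in $\spinc(X)|_Y$ since $(\rho')^*\mathfrak{t} \in \spinc(X)$. This proves the inclusion $\rho^*\bigl(\spinc(X)|_Y\bigr) \subseteq \spinc(X)|_Y$. For the reverse inclusion I would run the identical argument with $(\rho')^{-1}$ in place of $\rho'$; its restriction to $Y$ is $\rho^{-1}$, so one obtains $(\rho^{-1})^*\bigl(\spinc(X)|_Y\bigr) \subseteq \spinc(X)|_Y$, which rearranges to $\spinc(X)|_Y \subseteq \rho^*\bigl(\spinc(X)|_Y\bigr)$.

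There is no real obstacle here; the only point that needs care is ensuring that the restriction map $\spinc(X) \to \spinc(Y)$ is defined functorially enough for the naturality square above to commute on the nose, but this is guaranteed by the construction of $\spinc$-structures as lifts of the frame bundle. No further topological machinery (and in particular no use of the specific structure of double branched covers or of $\rho$ being an involution) is needed for this proposition.
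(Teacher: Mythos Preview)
Your proof is correct and follows essentially the same approach as the paper: both use the naturality identity $\rho^*(\mathfrak{t}|_Y) = ((\rho')^*\mathfrak{t})|_Y$ together with the fact that $(\rho')^*$ is a bijection on $\spinc(X)$. The paper compresses your two inclusions into a single line by writing $\rho^*(\spinc(X)|_Y) = (\rho')^*(\spinc(X))|_Y = \spinc(X)|_Y$, but the content is identical.
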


\begin{proof} Since $\rho'$ is a diffeomorphism, we have that $\rho^*(\spinc(X)|_Y) = (\rho')^*(\spinc(X))|_Y = \spinc(X)|_Y$.
\end{proof}

 In order to use this proposition, take $Y = \Sigma(S^3,K)$, $X = \Sigma(B^4,D)$ and $\rho = \widetilde{\sigma}: \Sigma(B^4,D) \to \Sigma(B^4,D)$ a lift of the strongly negative amphichiral symmetry from Corollary \ref{cor:snalift}. In order to rule out that $\widetilde{\sigma}_*(\spinc(X)|_Y) = \spinc(X)|_Y$ we will need to compute $\widetilde{\sigma}^*:\spinc(Y) \to \spinc(Y)$ and also restrict the possible subsets $\spinc(X)|_Y \subset \spinc(Y)$ using Donaldson's theorem. Proposition \ref{prop:spinc_compute} and Proposition \ref{prop:dual_embeddings} combined allow us to compute $\widetilde{\sigma}^*:\spinc(Y) \to \spinc(Y)$, and Proposition \ref{prop:spinc_metabolizers} gives restrictions on $\spinc(X)|_Y \subset \spinc(Y)$. See Section \ref{sec:example} for an example.

 We recall the following characterization of $\spinc$-structures in terms of characteristic co-vectors which we will use throughout this section. Let $X$ be a simply connected smooth 4-manifold with $\partial X$ either empty or a rational homology sphere. Let $Q$ be the intersection form on $X$ and $\spinc(X)$ be the set of $\spinc$-structures of $X$. Then the first Chern class gives a bijection between the $\spinc$-structures on $X$ and the characteristic co-vectors of $H_2(X)$ (see \cite[Proposition 2.4.16]{MR1707327}). More precisely, we have 
\[
\spinc(X) \cong \mbox{Char}(H_2(X)) := \{u \in H_2(X)^*\ |\ u(x) \equiv Q(x,x)\ (\mbox{mod } 2)\ \forall x \in H_2(X)\}.
\]
If $\partial X \neq \emptyset$ this identification induces a bijection
\[
\spinc(\partial X) \cong \mbox{Char}(H_2(X))/2i(H_2(X)).
\]
where $i:H_2(X) \to H_2(X)^*$ is given by $x \mapsto Q(x,-)$.

The following proposition gives restrictions on the set of $\spinc$-structures on a $3$-manifold which extend over a rational homology $4$-ball which it bounds.

\begin{proposition}
\label{prop:spinc_metabolizers}
Let $X$ be a simply connected positive-definite smooth 4-manifold with boundary a rational homology sphere $Y$. Suppose that $Y$ bounds a rational homology 4-ball $W$. The inclusion map $X \to X \cup_Y -W$ induces an embedding $\iota_*: (H_2(X),Q) \to (\mathbb{Z}^n,\mbox{Id})$ where $Q$ is the intersection form of $X$. Choosing a basis for $H_2(X)$, $\iota_*$ is given by an $n\times n$ matrix $A$ and the $\spinc$-structures on $Y$ which extend over $W$ are those of the form
\[
A^{\mathsf{T}}(v) \pmod{2Q} \in \spinc(Y) = \textup{Char}(H_2(X))/\textup{im}(2Q)
\]
where $v \in \mathbb{Z}^n$ is any vector with all odd entries, and where elements of $\textup{Char}(H_2(X)) \subset \mbox{Hom}(H_2(X), \mathbb{Z})$ are written in the dual basis.
\end{proposition}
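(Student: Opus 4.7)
The plan is to form the closed $4$-manifold $Z = X \cup_Y (-W)$, apply Donaldson's theorem to diagonalize its intersection form, and then translate between $\spinc$-structures and characteristic covectors. A Mayer--Vietoris computation with rational coefficients (using that $Y$ is a rational homology sphere and $W$ is a rational homology $4$-ball) gives $b_1(Z)=0$, $b_2(Z) = n$, and that the intersection form on $Z$ is positive-definite (since $H_2(Z;\mathbb{Q})$ is carried by the image of $H_2(X;\mathbb{Q})$, where the form equals $Q$). After possibly attaching $2$-handles to $W$ along generators of $\pi_1(W)$ to arrange that $Z$ is simply connected, Donaldson's theorem yields $(H_2(Z;\mathbb{Z})/\textup{tors},\ Q_Z) \cong (\mathbb{Z}^n, \textup{Id})$. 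The inclusion $X \hookrightarrow Z$ then induces an isometric lattice embedding $\iota_{*}\colon (H_2(X),Q) \hookrightarrow (\mathbb{Z}^n, \textup{Id})$, represented in the chosen basis by an $n \times n$ matrix $A$ with $A^{\mathsf T} A = Q$.

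Next, I would show that a $\spinc$-structure $\mathfrak{s}_Y$ on $Y$ extends over $W$ if and only if it is the restriction of some $\spinc$-structure on $Z$ through the $X$-side. One direction is automatic. For the other, the key observation is that every $\mathfrak{s}_Y$ extends over $X$: since $X$ is simply connected, Poincar\'e--Lefschetz duality gives $H^3(X,Y) \cong H_1(X) = 0$, so the long exact sequence of the pair $(X,Y)$ shows that $H^2(X) \to H^2(Y)$ is surjective, and consequently $\spinc(X) \to \spinc(Y)$ is surjective as well (as $\spinc(X)$ is a nonempty $H^2(X)$-torsor). Given $\mathfrak{s}_Y$ extending over $W$ as $\mathfrak{s}_W$, choose any extension $\mathfrak{s}_X$ to $X$ and glue $\mathfrak{s}_X$ and $\mathfrak{s}_W$ along $Y$ to obtain the desired $\mathfrak{s}_Z$.

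For the final step I would compute directly. Via the first Chern class, $\spinc(Z)$ corresponds to characteristic covectors for $(\mathbb{Z}^n, \textup{Id})$, i.e.\ vectors $v \in \mathbb{Z}^n$ all of whose entries are odd. Restricting such a $\mathfrak{s}_Z \leftrightarrow v$ to $X$ corresponds to pulling $v$ back along $\iota_{*}$, giving the covector $A^{\mathsf T} v \in H_2(X)^{*}$, which is automatically characteristic because $\iota_{*}$ is isometric; restricting further to $Y$ then amounts to passing to the class modulo $\textup{im}(2Q)$ in $\textup{Char}(H_2(X))/\textup{im}(2Q) = \spinc(Y)$. Combined with the previous step, this produces exactly the formula in the statement. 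The main obstacle is the technical one of applying Donaldson's theorem when $W$ is not simply connected: this is handled either by the $2$-handle surgery indicated above (noting that those handles contribute orthogonally to $H_2(X)$ and can be split off), or by invoking a standard extension of Donaldson's theorem to closed smooth $4$-manifolds with $b_1 = 0$.
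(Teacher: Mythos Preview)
Your proposal is correct and follows essentially the same route as the paper: form $Z = X \cup_Y -W$, apply Donaldson's theorem to diagonalize $H_2(Z)/\textup{Tor}$, argue that a $\spinc$-structure on $Y$ extends over $W$ if and only if it extends over $Z$ (using surjectivity of $\spinc(X)\to\spinc(Y)$), and then compute the restriction combinatorially via $A^{\mathsf T}$. The paper simply applies Donaldson's theorem to $H_2(Z)/\textup{Tor}$ without addressing simple connectivity, implicitly using the $b_1=0$ version you mention at the end; your $2$-handle detour is unnecessary and, as phrased, slightly off since handles attached along $\partial W = Y$ would change the boundary, but this is moot given your stated alternative.
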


\begin{proof}
Let $Z = X \cup_Y -W$, and note that $Z$ is positive definite (see e.g. \cite[Proposition 7]{MR4062777}). Hence by Donaldson's theorem there is an isomorphism of intersection forms $(H_2(Z)/\mbox{Tor},Q_Z) \cong (\mathbb{Z}^n,\mbox{Id})$ where $n = b_2(X)$. We then have a map $\iota_*:(H_2(X),Q) \to (\mathbb{Z}^n,\mbox{Id})$ induced by the inclusion $\iota: X \hookrightarrow Z$. Applying Hom$(-,\mathbb{Z})$ gives the map $\iota^*: H^2(Z)/\mbox{Tor} \to H^2(X)$ which induces a map $\iota^*: \mbox{Char}(H_2(Z)) \to \mbox{Char}(H_2(X))$ on $\spinc$-structures. Recall as well that the restriction $r:\spinc(X) \to \spinc(Y)$ is given by the quotient map
\[
r: \mbox{Char}(H_2(X)) \to \mbox{Char}(H_2(X))/2i(H_2(X))
\]
where $i: H_2(X) \to \textup{Hom}(H_2(X), \mathbb{Z})$ is given by $x \mapsto Q(x,-)$. Hence the restriction map from $\spinc(Z) \to \spinc(Y)$ is given by $r \circ \iota^*$. We then claim that the image of $r \circ \iota^*$ is precisely the $\spinc$-structures on $Y$ which extend over $W$. Indeed $r$ is surjective, so all $\spinc$-structures on $Y$ extend over $X$, and hence a $\spinc$-structure on $Y$ extends over $W$ if and only if it extends over all of $Z$.

Combinatorially, we can compute this restriction as follows. Choose a basis for $H_2(X)$, and the dual basis for $\textup{Hom}(H_2(X), \mathbb{Z})$. Then $\iota_*$ is given by a matrix $A$, and $\iota^*$ is given by $A^{\mathsf{T}}$. The characteristic covectors of $H_2(Z)$ are given by vectors $v$ in $\mathbb{Z}^n$ with all odd entries. Then the image of $\iota^*$ consists of elements of all vectors of the form
\[
  % A^{\mathsf{T}}\begin{bmatrix} 1 \\ \vdots \\ 1 \end{bmatrix} + \mbox{im}(2A^{\mathsf{T}}) \subset \textup{Char}(H_2(X)) = \spinc(X),
  A^{\mathsf{T}} v \in \textup{Char}(H_2(X)) = \spinc(X),
\]
written in the dual basis for $\textup{Hom}(H_2(X), \mathbb{Z}) \supset \textup{Char}(H_2(X))$. The image of $r \circ \iota^*$ then consists of these vectors modulo the column space of $2Q$.
\end{proof} 

We now turn to computing $\widetilde{\sigma}^*:\spinc(\Sigma(S^3,K)) \to \spinc(\Sigma(S^3,K))$. To do so, begin with a strongly negative amphichiral alternating diagram for $K$, and let $F_+$ and $F_-$ be the pair of checkerboard surfaces with $F_+$ and $F_-$ positive and negative definite respectively. Note that $F_+$ and $F_-$ are exchanged by the strongly negative amphichiral symmetry. 

\begin{definition} \label{def:sigmaswap}
Take $S^4$ as the unit sphere in $\mathbb{R}^5$. Define $\sigma_{swap}: S^4 \to S^4$ as the involution
\[
(x_1,x_2,x_3,x_4,x_5) \mapsto (x_1,-x_2,-x_3,-x_4,-x_5).
\]
\end{definition}
On the equatorial $S^3 = \{(x_1,x_2,x_3,x_4,0): x_1^2 + x_2^2 + x_3^2 + x_4^2 = 1\}$, $\sigma_{swap}$ restricts to the (unique\footnote{Livesay \cite{MR155323} proved that up to conjugation there is a unique involution on $S^3$ with exactly two fixed points.}) amphichiral symmetry $\sigma$ with two fixed points $(\pm1,0,0,0,0)$. Finally, note that $\sigma_{swap}$ is orientation-preserving and exchanges the two hemispheres of $S^4$.

%\begin{lemma} \label{lemma:sigmaswap}
%Let $\sigma:S^3 \to S^3$ be the amphichiral symmetry as above. Then there is an orientation-preserving involution $\sigma_{swap}:S^4 \to S^4$ which restricts to $\sigma$ on the equatorial $S^3$ and exchanges the two hemispheres of $S^4$. Furthermore the fixed-point set of $\sigma_{swap}$ is two points, both of which lie in $S^3$.
%\end{lemma}
%\begin{proof}
%Take $S^4$ as the unit sphere in $\mathbb{R}^5$. Then define $\sigma_{swap}$ as 
%\[
%(x_1,x_2,x_3,x_4,x_5) \mapsto (x_1,-x_2,-x_3,-x_4,-x_5)
%\]
%where the equatorial $S^3 = \{(x_1,x_2,x_3,x_4,0): x_1^2 + x_2^2 + x_3^2 + x_4^2 = 1\}$. The fixed points are $(\pm1,0,0,0,0) \subset S^3$.
%\end{proof}

With respect to this involution $\sigma_{swap}$, we can push $F_+$ and $F_-$ equivariantly into distinct hemispheres of $S^4$. By Proposition \ref{prop:snalift} there are two lifts $\widetilde{\sigma}_{swap}$ and $\widetilde{\sigma}_{swap}'$ of $\sigma_{swap}$ to an order 4 symmetry of $\Sigma(S^4,F_+ \cup F_-)$. We have that $\widetilde{\sigma}_{swap} = \widetilde{\sigma}_{swap}' \circ \tau$ where $\tau$ is the non-trivial deck transformation involution $\tau: \Sigma(S^4,F_+ \cup F_-) \to \Sigma(S^4,F_+ \cup F_-)$. Using Lemma \ref{lemma:tauhomology} this implies that
\[
-(\widetilde{\sigma}_{swap})_* = (\widetilde{\sigma}_{swap}')_*: H_2(\Sigma(S^4,F_+ \cup F_-)) \to H_2(\Sigma(S^4,F_+ \cup F_-)).
\]
This immediately implies the following proposition. 

\begin{proposition} \label{prop:GLhomologymap}
Let $\widetilde{\sigma}_{swap}, \widetilde{\sigma}_{swap}'$ be the two lifts of $\sigma_{swap}$ to $\Sigma(S^4,F_+ \cup F_-)$. These lifts induce maps $H_2(\Sigma(B^4,F_+)) \to H_2(-\Sigma(B^4,F_-))$ which are equal to $\pm\sigma_*:H_1(F_+) \to H_1(F_-)$ under the identification of $H_2(\Sigma(B^4,F_{\pm}))$ with $H_1(F_{\pm})$ from \cite[Theorem 3]{MR500905}.
\end{proposition}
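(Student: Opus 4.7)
The plan is to identify the $H_2$-map induced by the lift $\widetilde{\sigma}_{swap}$ via the naturality of the Gordon--Litherland isomorphism from \cite[Theorem 3]{MR500905}, and then use the already-established fact (in the paragraph preceding the proposition) that the two lifts differ by a sign on $H_2$ in order to produce the $\pm$.

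First, write $B^4_{\pm}$ for the hemispheres of $S^4$ into which $F_{\pm}$ have been equivariantly pushed, and note that $\sigma_{swap}$ carries $(B^4_+, F_+)$ diffeomorphically to $(B^4_-, F_-)$. Consequently any lift $\widetilde{\sigma}_{swap}$ restricts to a diffeomorphism $\Sigma(B^4_+, F_+) \to \Sigma(B^4_-, F_-)$. Since $\sigma_{swap}$ preserves the orientation of $S^4$ but reverses the orientation of the equatorial $S^3$, the map $\sigma_{swap}: B^4_+ \to B^4_-$ is orientation-reversing when both hemispheres are oriented so as to induce the same orientation on $S^3$. Hence the restricted lift is an orientation-preserving diffeomorphism $\Sigma(B^4_+, F_+) \to -\Sigma(B^4_-, F_-)$, which explains the codomain in the statement.

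Second, I would invoke the naturality of the Gordon--Litherland isomorphism $H_1(F) \cong H_2(\Sigma(B^4,F))$: for a diffeomorphism of pairs $\phi:(B^4, F) \to (B^4, F')$ with lift $\widetilde{\phi}$ to the branched covers, the square
\[
\begin{tikzcd}
H_1(F) \arrow[r, "\cong"] \arrow[d, "\phi_*"'] & H_2(\Sigma(B^4, F)) \arrow[d, "\widetilde{\phi}_*"] \\
H_1(F') \arrow[r, "\cong"] & H_2(\Sigma(B^4, F'))
\end{tikzcd}
\]
commutes. This is an immediate consequence of the geometric construction of the isomorphism, which respects any diffeomorphism of pairs. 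Applied to $\sigma_{swap}:(B^4_+, F_+) \to (B^4_-, F_-)$, whose restriction to $F_+$ is precisely $\sigma|_{F_+}$, this identifies $(\widetilde{\sigma}_{swap})_*$ on $H_2$ with $\sigma_*: H_1(F_+) \to H_1(F_-)$.

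Finally, since the other lift satisfies $\widetilde{\sigma}_{swap}' = \widetilde{\sigma}_{swap} \circ \tau$ and $\tau_* = -\mbox{Id}$ on $H_2(\Sigma(B^4_+, F_+))$ by Lemma \ref{lemma:tauhomology} applied to $X = B^4$ (noting $H_2(B^4) = 0$), the other lift induces $-\sigma_*$, giving the $\pm\sigma_*$ claim. The main obstacle I anticipate is the orientation bookkeeping producing the sign change to $-\Sigma(B^4, F_-)$; the naturality of the Gordon--Litherland isomorphism itself is essentially automatic once its geometric construction is unwound.
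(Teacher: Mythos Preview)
Your proposal is correct and follows essentially the same approach as the paper, which simply records that the proposition ``immediately'' follows from the preceding paragraph (the two lifts differ by $\tau$, and $\tau_*=-\mathrm{Id}$ by Lemma~\ref{lemma:tauhomology}); you have just spelled out explicitly the naturality of the Gordon--Litherland isomorphism and the orientation bookkeeping that the paper leaves implicit. The only minor difference is that you apply Lemma~\ref{lemma:tauhomology} to $\Sigma(B^4,F_+)$ (with $X=B^4$) rather than to $\Sigma(S^4,F_+\cup F_-)$ (with $X=S^4$) as the paper does, but both are valid.
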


%The idea is then to take a spin$^c$-structure $\overline{\mathfrak{s}}$ on $\Sigma(S^4,F_+ \cup F_-)$, and restrict it to a spin$^c$-structure on $\Sigma(S^3,K)$ in two different ways. On one hand, we can restrict directly to a spin$^c$-structure $\mathfrak{s}$. On the other hand, we can first restrict to $\spinc(\Sigma(B^4,F_+))$, apply $(\widetilde{\sigma}_{swap})^*$, then restrict to $\mathfrak{s}' \in \spinc(\Sigma(S^3,K))$.We then have that $\widetilde{\sigma}^*(\mathfrak{s}) = \mathfrak{s}'$.
We now use $\widetilde{\sigma}_{swap}$ to help us understand the action of $\widetilde{\sigma}$ on $\spinc$-structures.
\begin{proposition}
  \label{prop:spinc_compute}
Let $(K,\sigma)$ be a strongly negative amphichiral knot and fix a lift $\widetilde{\sigma}:\Sigma(S^3,K) \to \Sigma(S^3,K)$ (see Proposition \ref{prop:snalift3}). The induced action $\widetilde{\sigma}^*: \spinc(\Sigma(S^3,K)) \to \spinc(\Sigma(S^3,K))$ can be computed as follows. Let $\mathfrak{s} \in \spinc(\Sigma(S^3,K))$, let $r,r_-,$ and $r_+$ be the obvious restriction maps in the following non-commutative diagram, and let $\overline{\mathfrak{s}} \in \spinc(\Sigma(S^4, F_+ \cup F_-))$ such that $r \circ r_+(\overline{\mathfrak{s}}) = \mathfrak{s}$. Then $\widetilde{\sigma}^*(\mathfrak{s}) = r \circ (\widetilde{\sigma}_{swap})^* \circ r_-(\overline{\mathfrak{s}})$, where the lift $\widetilde{\sigma}_{swap}$ is chosen to agree with $\widetilde{\sigma}$ on $\Sigma(S^3,K)$.

\begin{center}\begin{tikzcd}
{\spinc(\Sigma(S^3,K))} & {\spinc(\Sigma(B^4,F_+))} &{\spinc(\Sigma(S^4,F_+ \cup F_-))} \\
&{\spinc(-\Sigma(B^4,F_-))}&
\arrow["r"', from=1-2, to=1-1]
\arrow["r_+"', from=1-3, to=1-2]
\arrow["r_-", from=1-3, to=2-2]
\arrow["({\widetilde{\sigma}_{swap})^*}", from=2-2, to=1-2]
\end{tikzcd}
\end{center}

\end{proposition}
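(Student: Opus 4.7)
The plan is to identify $\widetilde{\sigma}_{swap}$ as a diffeomorphism that exchanges the two ``hemisphere'' pieces of $\Sigma(S^4, F_+ \cup F_-)$ and restricts on their shared boundary $Y := \Sigma(S^3, K)$ to the chosen lift $\widetilde{\sigma}$, and then to read off the formula by naturality of restriction of $\spinc$-structures. Concretely, I would write $\Sigma(S^4, F_+ \cup F_-) = \Sigma(B^4, F_+) \cup_Y (-\Sigma(B^4, F_-))$, matching the splitting $S^4 = B^4_+ \cup_{S^3} (-B^4_-)$ into the two hemispheres swapped by $\sigma_{swap}$. Since $\sigma_{swap}$ is orientation-preserving on $S^4$ and interchanges the hemispheres, its lift $\widetilde{\sigma}_{swap}$ is orientation-preserving and yields a diffeomorphism $\Sigma(B^4, F_+) \to -\Sigma(B^4, F_-)$. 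Its restriction to $Y$ is then a self-diffeomorphism of $Y$ covering $\sigma$, which by Proposition \ref{prop:snalift3} must be one of the two lifts of $\sigma$; replacing $\widetilde{\sigma}_{swap}$ by $\tau \circ \widetilde{\sigma}_{swap}$ if necessary, I arrange $\widetilde{\sigma}_{swap}|_Y = \widetilde{\sigma}$.

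Next I would verify existence of the auxiliary $\overline{\mathfrak{s}}$. Both $\Sigma(B^4, F_+)$ and $-\Sigma(B^4, F_-)$ are simply connected 4-manifolds with rational homology sphere boundary $Y$, so the restrictions $\spinc(\Sigma(B^4, F_\pm)) \to \spinc(Y)$ are surjective (for instance because $H^3(\Sigma(B^4, F_\pm), Y) \cong H_1(\Sigma(B^4, F_\pm)) = 0$). Hence $\mathfrak{s}$ lifts to $\spinc$-structures $\mathfrak{t}$ on $\Sigma(B^4, F_+)$ and $\mathfrak{t}'$ on $-\Sigma(B^4, F_-)$ with common boundary value $\mathfrak{s}$, and these glue to an $\overline{\mathfrak{s}} \in \spinc(\Sigma(S^4, F_+ \cup F_-))$ satisfying $r_+(\overline{\mathfrak{s}}) = \mathfrak{t}$, $r_-(\overline{\mathfrak{s}}) = \mathfrak{t}'$, and so $r \circ r_+(\overline{\mathfrak{s}}) = \mathfrak{s}$.

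With such an $\overline{\mathfrak{s}}$ in hand, the formula follows from a two-step diagram chase. Consider $(\widetilde{\sigma}_{swap})^*(\overline{\mathfrak{s}}) \in \spinc(\Sigma(S^4, F_+ \cup F_-))$ and compute its restriction to $Y$ in two ways. First, since pullback commutes with restriction to the boundary and $\widetilde{\sigma}_{swap}|_Y = \widetilde{\sigma}$, this restriction equals $\widetilde{\sigma}^*(\mathfrak{s})$. Second, restricting to $\Sigma(B^4, F_+)$ first and then to $Y$, naturality of pullback with respect to the diffeomorphism $\widetilde{\sigma}_{swap}: \Sigma(B^4, F_+) \to -\Sigma(B^4, F_-)$ gives
\[
r_+ \circ (\widetilde{\sigma}_{swap})^*(\overline{\mathfrak{s}}) = (\widetilde{\sigma}_{swap})^* \circ r_-(\overline{\mathfrak{s}}),
\]
after which applying $r$ yields $r \circ (\widetilde{\sigma}_{swap})^* \circ r_-(\overline{\mathfrak{s}})$. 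Equating the two computations of the same boundary $\spinc$-structure on $Y$ gives the claimed identity.

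The main obstacle I anticipate is the bookkeeping in the setup: verifying that $\widetilde{\sigma}_{swap}$ really does carry $\Sigma(B^4, F_+)$ to $-\Sigma(B^4, F_-)$ with the correct orientation convention, and confirming that the two lifts of $\sigma_{swap}$ to $\Sigma(S^4, F_+ \cup F_-)$ restrict on $Y$ to the two distinct lifts of $\sigma$, so that the normalization $\widetilde{\sigma}_{swap}|_Y = \widetilde{\sigma}$ can always be achieved. Once these orientation and lift-matching issues are pinned down, the rest is formal naturality of the restriction of $\spinc$-structures across the glued boundary.
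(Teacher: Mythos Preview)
Your proposal is correct and follows essentially the same approach as the paper: both arguments use that $\widetilde{\sigma}_{swap}$ restricts to $\widetilde{\sigma}$ on $Y$ and exchanges the two hemisphere pieces, then chase the naturality relation $r_+ \circ (\widetilde{\sigma}_{swap})^* = (\widetilde{\sigma}_{swap})^* \circ r_-$ to obtain the formula. Your treatment is in fact somewhat more thorough than the paper's, since you explicitly justify the existence of $\overline{\mathfrak{s}}$ and the normalization $\widetilde{\sigma}_{swap}|_Y = \widetilde{\sigma}$, points the paper handles with ``by construction.''
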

\begin{proof}
By construction, $\widetilde{\sigma}_{swap}|_{\Sigma(S^3,K)} = \widetilde{\sigma}^*$. Hence the map 
\[
(\widetilde{\sigma}_{swap})^*: \spinc(\Sigma(S^4,F_+ \cup F_-)) \to \spinc(\Sigma(S^4,F_+ \cup F_-))
\] 
restricts to $\widetilde{\sigma}^*: \spinc(\Sigma(S^3,K)) \to \spinc(\Sigma(S^3,K))$. We then compute
\begin{align*}
\widetilde{\sigma}^*(\mathfrak{s}) &= (\widetilde{\sigma}_{swap})^*(\mathfrak{s})\\
&= (\widetilde{\sigma}_{swap})^* \circ r \circ r_+(\overline{\mathfrak{s}}) \\
&= r \circ r_+ \circ (\widetilde{\sigma}_{swap})^*(\overline{\mathfrak{s}}) \\
&= r \circ (\widetilde{\sigma}_{swap})^* \circ r_-(\overline{\mathfrak{s}}),
\end{align*}
where $r_+ \circ (\widetilde{\sigma}_{swap})^* = (\widetilde{\sigma}_{swap})^* \circ r_-$ since $\widetilde{\sigma}_{swap}$ exchanges $\Sigma(B^4,F_+)$ and $\Sigma(B^4,F_-)$ in $\Sigma(S^4,F_+ \cup F_-)$.
\end{proof}

Following \cite{MR500905}, we associate to each vertex $v_i$ of the checkerboard graph $\mathcal{G}(F_+)$ an element of $H_2(\Sigma(B^4,F_+))$, which we again refer to as $v_i$ as follows. The vertex $v_i$ corresponds to a planar region of the knot diagram. Let $\gamma_i$ be the simple loop in $F_+$ running once counterclockwise around this region. Applying the isomorphism $H_1(F_+) \cong H_2(\Sigma(B^4,F_+))$ from \cite[Theorem 3]{MR500905} we get an element $v_i \in H_2(\Sigma(B^4,F_+))$. We call the $\{v_i\}$ the \emph{vertex generating set} of $H_2(\Sigma(B^4,F_+))$.

% To Do: add definition of an oriented incidence matrix?
\begin{definition}\label{def:incidence}
  Let $D_K$ be a strongly negative amphichiral alternating knot diagram, let $F_{\pm}$ be the positive and negative definite checkerboard surfaces and let $\mathcal{G}(F_{\pm})$ be the corresponding checkerboard graphs, embedded as dual planar graphs. The graphs $\mathcal{G}(F_{\pm})$ are \emph{compatibly oriented} if their edges are oriented such that intersecting dual edges satisfy the right hand rule as in the left of Figure \ref{fig:arcorientation}.

  Suppose $\mathcal{G}(F_{\pm})$ are compatibly oriented and order the vertices of each of $\mathcal{G}(F_{\pm})$ so that the strongly negative amphichiral symmetry respects the orderings, and enumerate the edges of each graph so that intersecting edges have the same index; see Figure \ref{fig:12a1105graph} for an example. We call the oriented incidence matrices $J_{\pm}$ for $\mathcal{G}(F_{\pm})$ \emph{compatible}. We use the notation $J_+^*$ (resp. $J_-^*$) to denote the matrix $J_+$ (resp. $J_-$) with the last row removed. Recall that in an oriented incidence matrix $A$, 
\[A_{i,j} = \begin{cases} 
      1 & \mbox{if the } j \mbox{th edge begins at the } i \mbox{th vertex,} \\
      -1 & \mbox{if the } j \mbox{th edge terminates at the } i \mbox{th vertex, and} \\
      0 & \mbox{otherwise.} 
   \end{cases}
\]
\end{definition}

The following proposition can be used to combinatorially compute the maps $r_+$ and $r_-$ from Proposition \ref{prop:spinc_compute} in terms of oriented incidence matrices; see Remark \ref{rmk:r+-}.

\begin{proposition}
\label{prop:dual_embeddings}
Let $D$ be an alternating knot diagram with positive and negative definite checkerboard surfaces $F_+$ and $F_-$ respectively, and let $\mathcal{G}(F_{\pm})$ be compatibly oriented checkerboard graphs, see Definition \ref{def:incidence}. Then there is an orthonormal basis $\{e_i\}$ of $H_2(\Sigma(S^4,F_+ \cup F_-))$ in bijection with the crossings of $D$ for which the maps $H_2(\pm\Sigma(B^4,F_{\pm})) \to H_2(\Sigma(S^4,F_+ \cup F_-))$ induced by inclusion are given by the transposes $(J_{\pm})^{\mathsf{T}}$ of the oriented incidence matrices of $\mathcal{G}(F_{\pm})$ with respect to the vertex generating sets for $H_2(\pm\Sigma(B^4,F_{\pm}))$.
\end{proposition}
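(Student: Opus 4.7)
The strategy is to compute $H_2(\Sigma(S^4, F_+ \cup F_-))$ using Mayer--Vietoris on the decomposition $\Sigma(S^4, F_+ \cup F_-) \cong \Sigma(B^4, F_+) \cup_{\Sigma(S^3,K)} -\Sigma(B^4, F_-)$, to construct a geometric orthonormal basis $\{e_k\}$ indexed by the crossings of $D$, and finally to verify the claimed incidence-matrix formulas by intersection-pairing calculations.

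First I would establish the structure of $H_2(\Sigma(S^4, F_+\cup F_-))$. Since $\Sigma(S^3,K)$ is a rational homology sphere, Mayer--Vietoris together with Gordon--Litherland's computation \cite{MR500905} shows that $H_2(\Sigma(S^4, F_+\cup F_-))$ modulo torsion has rank $(v_+-1) + (v_--1) = c$, where $v_\pm$ is the number of vertices of $\mathcal{G}(F_\pm)$ and I have used the standard alternating identity $v_+ + v_- = c + 2$. By Novikov additivity applied to the decomposition above, and using that the intersection forms of $\Sigma(B^4, F_+)$ and $-\Sigma(B^4, F_-)$ are both positive definite Goeritz forms, the signature of $\Sigma(S^4, F_+\cup F_-)$ equals $c$. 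Hence the intersection form on $H_2/\mathrm{Tor}$ is positive definite of rank $c$, so Donaldson's theorem forces it to be isometric to $(\mathbb{Z}^c,\mathrm{Id})$.

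Next I would construct the basis $\{e_k\}$ geometrically. At each crossing $c_k$ of $D$, choose a short arc $\alpha_k$ in $S^4$ joining a point of $F_+$ near $c_k$ to a nearby point of $F_-$, and let $D_k \subset S^4$ be a small disk containing $\alpha_k$ that meets $F_+ \cup F_-$ transversely exactly at the two endpoints of $\alpha_k$. Since $\partial D_k$ lies in $S^4 \setminus (F_+ \cup F_-)$ but $D_k$ meets the branch locus in two points, the preimage of $D_k$ in $\Sigma(S^4, F_+\cup F_-)$ is a 2-sphere $e_k$, obtained by gluing two disks along a common lifted boundary circle. A local computation in the branched double cover near $c_k$ gives $e_k\cdot e_k = \pm 1$ and $e_j \cdot e_k = 0$ for $j \neq k$; since the ambient form has rank and signature both equal to $c$, after possibly reversing orientations of individual spheres the $\{e_k\}$ form an orthonormal basis of $H_2/\mathrm{Tor}$.

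Finally, I would compute the image of each vertex generator. For a vertex $v_i$ of $\mathcal{G}(F_+)$, the associated loop $\gamma_i \subset F_+$ lifts to a class in $H_2(\Sigma(B^4, F_+))$ whose image in $H_2(\Sigma(S^4, F_+ \cup F_-))$ is computed by its intersections with the $e_k$. After a small isotopy, the lifted $\gamma_i$ meets $D_k$ transversely in a single point exactly when the edge $k$ of $\mathcal{G}(F_+)$ is incident to $v_i$, and not at all otherwise. The sign of this intersection, computed in the double cover, is precisely the entry $(J_+)_{i,k}$ dictated by the compatible orientation convention, so the inclusion map is $J_+^{\mathsf{T}}$ in the stated bases. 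The analogous argument, swapping the roles of $F_+$ and $F_-$, yields $J_-^{\mathsf{T}}$ for the other inclusion, and the key point is that the right-hand-rule compatibility in Definition \ref{def:incidence} guarantees that both computations refer consistently to the \emph{same} basis $\{e_k\}$ of $H_2(\Sigma(S^4, F_+ \cup F_-))$. The main obstacle is exactly this sign-matching: a careful local model at a single crossing is needed to check that the orientation of the lifted 2-sphere $e_k$, read off from a pair of dual oriented edges via the right-hand rule, yields the same intersection sign on both sides. Once the sign calculation is done at one crossing, the alternating structure propagates the answer globally.
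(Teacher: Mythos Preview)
Your overall plan matches the paper's: produce an explicit orthonormal family of $2$-spheres indexed by crossings and then compute their pairings with the vertex generators. But your construction of the $e_k$ contains a genuine error.

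You take a disk $D_k \subset S^4$ meeting the branch locus $F_+ \cup F_-$ transversely in exactly two points and claim its preimage is a sphere ``obtained by gluing two disks along a common lifted boundary circle.'' In fact $\partial D_k$ has mod-$2$ linking number $0$ with the branch locus (it bounds a surface meeting the locus in an even number of points), so $\partial D_k$ lifts to \emph{two} circles, not one. The branched double cover of a disk over two interior points is an annulus, not a sphere. Thus your $e_k$, as described, is not a sphere, and the orthonormality and intersection computations built on it do not go through. Note also that Donaldson's theorem alone cannot rescue this: knowing abstractly that the form is standard does not single out a basis in which the inclusion maps are $J_\pm^{\mathsf{T}}$; you still need correctly constructed geometric representatives.

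The paper builds the spheres differently. It uses the arc $\alpha_i \subset S^3$ along which $F_+$ and $F_-$ meet at the $i$th crossing (before either surface is pushed into $B^4$); this arc has both endpoints on $K$, so its preimage $\widetilde{\alpha}_i$ in $\Sigma(S^3,K)$ is a circle. The sphere $e_i$ is then formed by capping $\widetilde{\alpha}_i$ on one side with an explicit disk $D^2_+(\alpha_i) \subset \Sigma(B^4,F_+)$ (the $I$-subbundle of the tubular neighborhood $N_+$ over $\alpha_i$, in the Gordon--Litherland cut-open model) and on the other side with an analogous disk in $-\Sigma(B^4,F_-)$. Orthonormality is established directly: $e_i \cdot e_j = 0$ by disjointness for $i \neq j$, and $e_i \cdot e_i = 1$ via an explicit push-off. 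This avoids Donaldson's theorem entirely, which is both lighter and sidesteps having to verify its hypotheses for $\Sigma(S^4, F_+ \cup F_-)$. The incidence-matrix identification is then carried out by a concrete local intersection computation in $N_+$, rather than being left as an acknowledged obstacle.
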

\begin{proof}
Following \cite[proof of Theorem 3]{MR500905}, $\Sigma(B^4, F_+)$ (and similarly $\Sigma(B^4,F_-)$) can be constructed as follows. Let $D_1$ denote the manifold obtained by cutting open $B^4$ along the trace of an isotopy which pushes $\mbox{int} (F_+)$ into $\mbox{int} (B^4)$. The manifold $D_1$ is homeomorphic to $B^4$ and the part exposed by the cut is given by a tubular neighborhood $N_+$ of $F_+$ in $S^3 \cong \partial D_1$. Let $D_2$ be another copy of $D_1$, and let $\iota \colon N_+ \rightarrow N_+$ be the involution given by reflecting each fiber. Then
\[
\Sigma(B^4, F_+) = (D_1 \cup -D_2) / (x \in N_+ \subset D_1 \sim \iota(x) \in N_+ \subset D_2).
\]
There is an isomorphism $\phi \colon (H_1(F_+), Q_{F_+}) \rightarrow (H_2(\Sigma(B^4,F_+)), Q_+)$, where $Q_{F_+}$ is the Gordon-Litherland form and $Q_+$ is the intersection form, which is given as follows. Let $a$ be a 1-cycle in $F_+$, then
\[
\phi([a]) = [\mbox{(cone on }a\mbox{ in }D_1\mbox{)} - \mbox{(cone on }a\mbox{ in }D_2\mbox{)}].
\]
%Taking $a$ as a counterclockwise simple loop around a region of $F_-$, $\phi[a]$ is an element of the vertex basis for $F_+$. 
The surfaces $F_+$ and $F_-$ in $S^3$ intersect in a collection of $k$ arcs $\alpha_1, \dots, \alpha_k$, one for each crossing of $D$. The $I$-subbundle of $N_+$ over $\alpha_i$ is a disk $D^2_+(\alpha_i) \subset D_1$ with boundary $\widetilde{\alpha}_i$, the preimage of $\alpha_i$ in $\Sigma(S^3,K)$. (The disk $D^2_+(\alpha_i)$ is also the trace of $\alpha_i$ under the isotopy pushing int$(F_+)$ into int$(B^4)$.) Note that $D^2_+(\alpha_i)$ is properly embedded in $\Sigma(B^4,F_+)$. Similarly, there is a disk $D^2_-(\alpha_i)$ properly embedded in $\Sigma(B^4,F_-)$, and gluing these disks along $\widetilde{\alpha}_i$ gives a sphere $e_i$ in $\Sigma(S^4,F_+ \cup F_-)$. 

Note that $e_1, \dots, e_k$ are in correspondence with the edges of $\mathcal{G}(F_+)$ (and $\mathcal{G}(F_-)$). Furthermore, the orientation on an edge $E_i$ in $\mathcal{G}(F_+)$ induces an orientation on the corresponding $e_i$ as follows. First, orient the arc $\alpha_i$ going into the page of the knot diagram (away from the reader). Next, push the interior of $\alpha_i$ into the region corresponding to the terminal vertex of $E_i$ and then out of the page of the diagram (toward the reader) so that it is disjoint from $F_+ \cup F_-$. Call the resulting arc $\alpha_i'$; see Figure \ref{fig:arcorientation}. Recall that $\Sigma(B^4,F_+) = D_1 \cup - D_2$ as an oriented manifold. Then the orientation of $\alpha_i' \subset D_1$ determines an orientation on the union of $\alpha_i' \subset D_1$ with $-\alpha_i' \subset -D_2$, which is locally isotopic within $\Sigma(S^3,K)$ to $\widetilde{\alpha}_i$. This orientation on $\widetilde{\alpha}_i$ then determines an orientation on $D^2_+(\alpha_i)$ as its oriented boundary, and this orientation on $D^2_+(\alpha_i)$ extends to an orientation on $e_i = D^2_+(\alpha_i) \cup D^2_-(\alpha_i)$.

\begin{figure}
\begin{overpic}[width=250pt, grid = false]{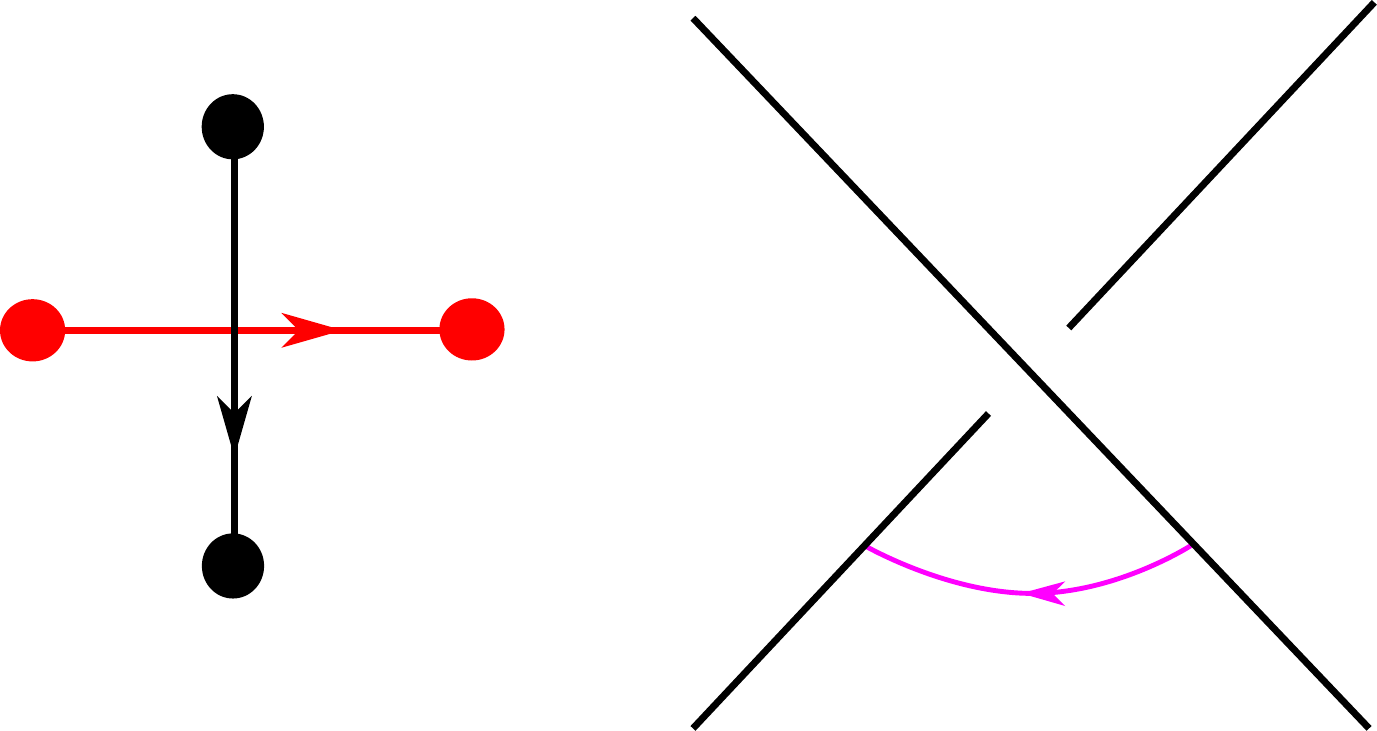}
\put (70, 4) {$\alpha_i'$}
\end{overpic}
\caption{An oriented edge of $\mathcal{G}(F_+)$ in black intersecting an edge of $\mathcal{G}(F_-)$ in red (left). The orientation on the red edge is induced by the right hand rule. On the right is the oriented arc $\alpha_i'$ induced from the oriented edge of $\mathcal{G}(F_+)$ in black.}
\label{fig:arcorientation}
\end{figure}

We now show that $\{e_1,\dots,e_k\}$ is an orthonormal basis for $H_2(\Sigma(S^4,F_+ \cup F_-))$. Note that $b_2(\Sigma(S^4,F_+ \cup F_-)) = b_2(\Sigma(B^4,F_+)) + b_2(\Sigma(B^4,F_-))$ since $\Sigma(S^3,K)$ is a rational homology sphere. However, $b_2(\Sigma(B^4,F_{\pm})) = n_{\pm} - 1$, where $n_{\pm}$ is the number of vertices of $\mathcal{G}(F_{\pm})$. From the Euler characteristic of the sphere of the knot diagram, we get $2 = n_+ - k + n_-$ since $\mathcal{G}(F_+)$ and $\mathcal{G}(F_-)$ are dual graphs. Hence $b_2(\Sigma(S^4,F_+ \cup F_-)) = k$. Thus it suffices to show that $e_1,\dots,e_k$ are orthonormal. Observe that $e_i$ and $e_j$ are disjoint for $i \neq j$ so it is enough to show that $e_i \cdot e_i = 1$. Consider the arcs $(\alpha_i)_{\pm}$ shown in Figure \ref{fig:e_i} where $(\alpha_i)_{\pm} \subset F_{\pm}$, and $(\alpha_i)_+$ intersects $(\alpha_i)_-$ at a single point. Observe that the preimages $(\widetilde{\alpha_i})_{\pm} \subset \Sigma(B^4,F_{\pm})$ of $(\alpha_i)_{\pm}$ bound disks $D^2_{\pm}(\alpha_i)'$ parallel to $D^2_{\pm}(\alpha_i)$ in $\Sigma(B^4,F_{\pm})$. There is an isotopy in $S^3$ between $(\alpha_i)_+$ and $(\alpha_i)_-$ intersecting $\alpha_i$ in a single point which induces an isotopy between $(\widetilde{\alpha_i})_+$ and $(\widetilde{\alpha_i})_-$. Gluing $D^2_+(\alpha_i)'$ to $D^2_-(\alpha_i)'$ along the (image of the) isotopy in $\Sigma(S^3,K)$ defines a push-off of $e_i$ which has a single positive transverse intersection with $e_i$. 

Recall that an element $v_i \in H_2(\Sigma(B^4,F_+))$ of the vertex generating set is represented by a sphere which intersects $N_+ \subset \Sigma(B^4,F_+)$ in a loop $\gamma_i \subset F_+$. By construction, $e_j \cap \Sigma(B^4,F_+)$ is the disk $D^2_+(\alpha_j)$ contained in $N_+ \subset \Sigma(B^4,F_+)$. Hence $v_i \cdot e_j$ can be computed locally in $N_+$. Diagrammatically (see Figure \ref{fig:vertexintersection}), we draw $\partial D_1 = S^3$ and think of $N_+$ as a neighborhood of $F_+ \subset S^3$. Specifically, $v_i \cdot e_j = 0$ if the edge corresponding to $e_j$ and $v_i$ are not incident, $v_i \cdot e_j = 1$ if the edge corresponding to $e_j$ begins at $v_i$, and $v_i \cdot e_j = -1$ if the edge corresponding to $e_j$ terminates at $v_i$. A similar argument applies to the vertex generating set of $H_2(-\Sigma(B^4,F_-))$.

\end{proof}

\begin{figure}
\begin{overpic}[width=150pt, grid = false]{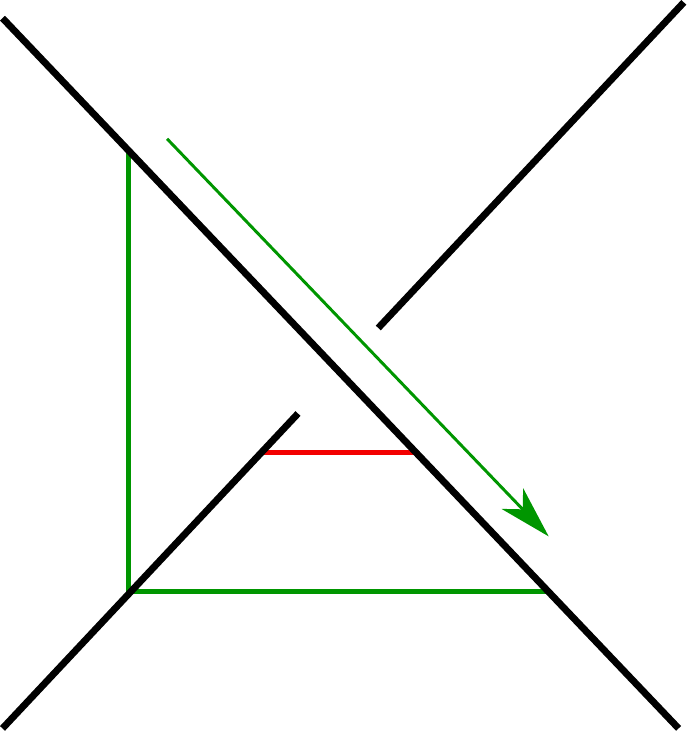}
\put (-8,40) {$(\alpha_i)_+$}
\put (40,6) {$(\alpha_i)_-$}
\put (40,30) {$\alpha_i$}
\end{overpic}
\caption{The arcs $(\alpha_i)_+$ and $(\alpha_i)_-$ are contained in the horizontal and vertical checkerboard surfaces respectively. The green arrow indicates an isotopy between them in $S^3$. Lifting this to $\Sigma(S^3,K)$, we see that the self pairing of the sphere $e_i$ is 1.}
\label{fig:e_i}
\end{figure}

\begin{figure}
\begin{overpic}[width=250pt, grid = false]{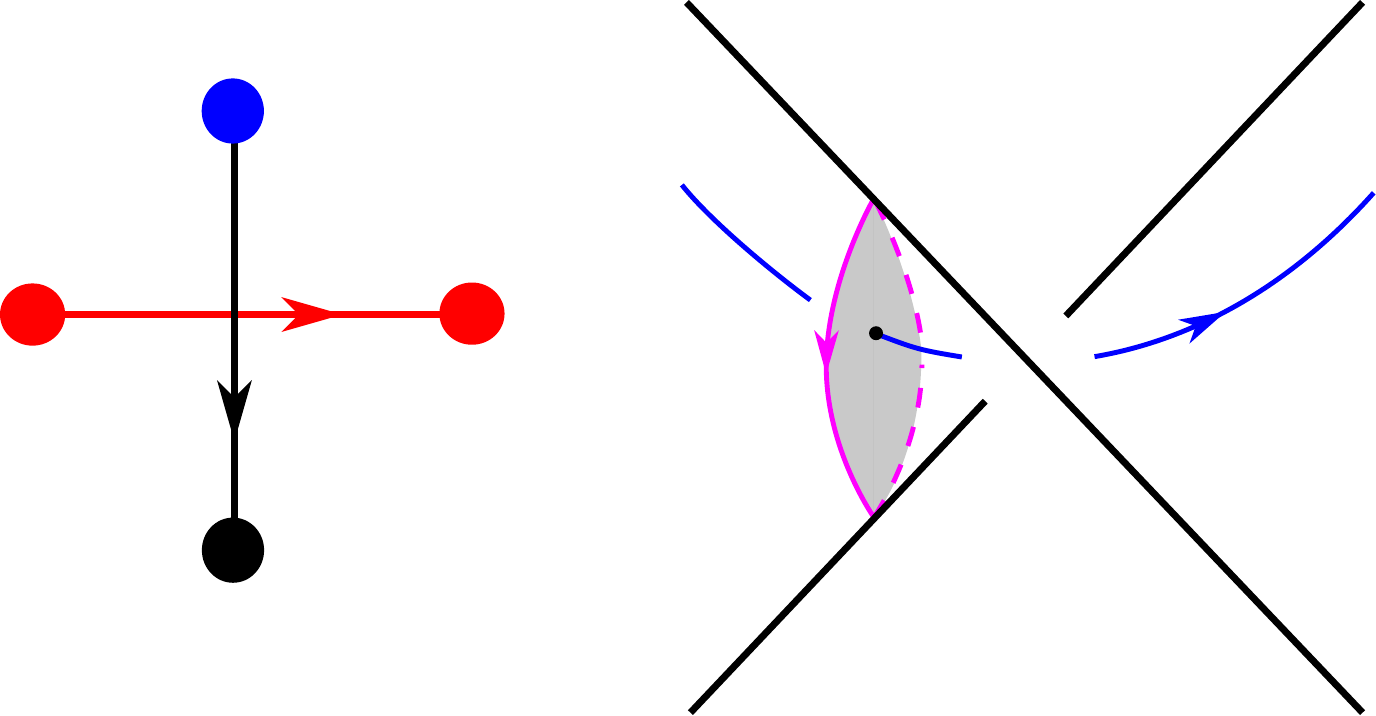}
\put (20,45) {$v_i$}
\put (20,23) {$e_j$}
\put (94, 30) {$v_j$}
\put (55,20) {$e_j$}
\end{overpic}
\caption{If $v_i \in \mathcal{G}(F_+)$ is the starting endpoint of an edge corresponding to $e_j$, then $e_j \cdot v_i = 1$. The magenta loop is the boundary of the gray disk $e_j \cap N_+$, and is oriented so that the arc coming out of the page is isotopic (keeping the endpoints on $K$) to $\alpha_j'$ (see Figure \ref{fig:arcorientation}) in the complement of $F_+ \cup F_-$.}
\label{fig:vertexintersection}
\end{figure}

\begin{remark} \label{rmk:r+-}
Note that Proposition \ref{prop:dual_embeddings} combinatorially determines the maps 
\[
r_{\pm}: \spinc(\Sigma(S^4,F_+ \cup F_-)) \to \spinc(\pm \Sigma(B^4,F_{\pm}))
\]
from Proposition \ref{prop:spinc_compute}. Specifically the maps $r_{\pm}$ are given by taking the duals of 
\[
H_2(\pm\Sigma(B^4,F_{\pm})) \to H_2(\Sigma(S^4,F_+ \cup F_-)),
\]
then restricting to characteristic vectors.
\end{remark}

We conclude the section with a proof of Theorem \ref{thm:combinatorial_thing} from the introduction.
\begin{proof}[Proof of Theorem \ref{thm:combinatorial_thing}]
  Let $Y = \Sigma(S^3, K)$ and $X_{\pm} = \Sigma(B^4, F_{\pm})$. We identify each of $H_2(X_{\pm})$ with the $\mathbb{Z}$-span of $\mbox{Vert}(\mathcal{G}(F_{\pm})) \backslash \{v_{\pm}\}$, where $\{v_+, v_-\}$ is the pair of $\sigma$-invariant vertices removed when defining $J_{\pm}^*$. Using the dual basis for $H_2(X_{\pm})^*$ we identify
  \[
  \spinc(X_{\pm}) \cong \mbox{Char}(\mathbb{Z}^n, A_{\pm})\mbox{ and } \spinc(Y) \cong \mbox{Char}(\mathbb{Z}^n, A_{+})/\textup{im}(2A_{+}).
  \]
  With respect to these choices of dual bases, we may choose a lift $\widetilde{\sigma}$ of $\sigma$ to $Y$ so that $\widetilde{\sigma}_{swap}^* : H_2(-X_{-})^* \rightarrow H_2(X_{+})^*$ is the identity matrix by Proposition \ref{prop:GLhomologymap}; this determines the map on $\spinc$-structures. Since $Y$ is a rational homology sphere, $b_2(\Sigma(B^4, F_+ \cup F_-)) = b_2(\Sigma(B^4, F_+)) + b_2(\Sigma(B^4,F_-)) = n + n$. Using the orthonormal basis for $H_2(\Sigma(B^4, F_+ \cup F_-)) \cong \mathbb{Z}^{2n}$ from Proposition \ref{prop:dual_embeddings}, we may identify
  \[
    \spinc(\Sigma(B^4, F_+ \cup F_-)) \cong \{v \in \mathbb{Z}^{2n} : v \equiv (1, 1, \ldots, 1)^{\mathsf{T}} (\mbox{mod } 2)\}.
  \]
  By Proposition \ref{prop:dual_embeddings} (see also Remark \ref{rmk:r+-}), the maps $r_{\pm}$ in Proposition \ref{prop:spinc_compute} are given by $J_{\pm}^*$. Proposition \ref{prop:spinc_compute} then shows that the map $\widetilde{\sigma}^*:\spinc(Y) \to \spinc(Y)$ is determined by 
\[
  \widetilde{\sigma}^*[J_+^* v] = [J_-^* v]\ \mbox{ for all } v \in \mathbb{Z}^{2n} \mbox{ with } v \equiv (1,1,\dots, 1)^{\mathsf{T}} \ (\textup{mod 2}).
\]
Finally, let $D$ be an equivariant slice disk for $K$. By Proposition \ref{prop:spinc_metabolizers}, the set of $\spinc$-structures of $Y$ which extend over $\Sigma(B^4, D)$ is given by
\[
S = \{[u]\in\spinc(Y) : u = A^{\mathsf{T}}v\mbox{ for some } v \in \mathbb{Z}^n \mbox{ with } v \equiv (1,1,\dots, 1)^{\mathsf{T}} \ (\textup{mod 2}) \},
\]
and by Corollary \ref{cor:snalift} there is a lift $\Sigma(B^4, D) \rightarrow \Sigma(B^4, D)$ which restricts to the lift $\widetilde{\sigma}$ on $Y$. Hence by Proposition \ref{prop:spincinvariance}, $S$ is $\widetilde{\sigma}^*$-invariant.
\end{proof}

\section{An alternating slice strongly negative amphichiral example} \label{sec:example}
In this section we give an example of a strongly negative amphichiral knot which Theorem \ref{thm:combinatorial_thing} shows is not equivariantly slice.

\begin{figure}
\begin{overpic}[width=350pt, grid=false]{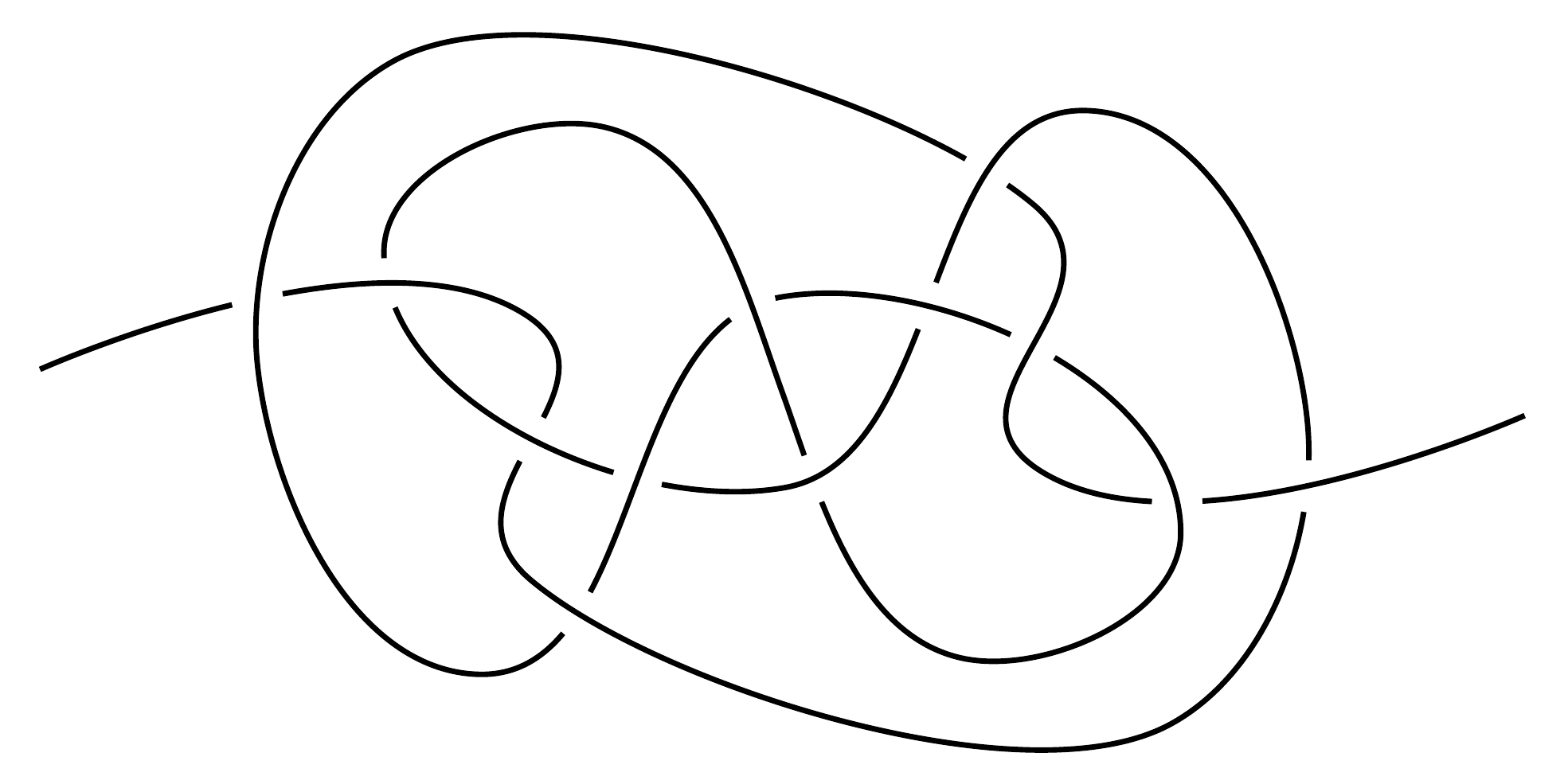}
  \end{overpic}
  \caption{A strongly negative amphichiral symmetry on $12a_{1105}$. The symmetry is $\pi$-rotation within the plane of the diagram followed by a reflection across the plane of the diagram.}
  \label{fig:12a1105}
\end{figure}

\begin{example}
Consider the slice knot $K = 12a_{1105}$ along with the strongly negative amphichiral alternating diagram shown in Figure \ref{fig:12a1105}. Theorem \ref{thm:combinatorial_thing} obstructs $K$ from being equivariantly slice. Note that Theorem \ref{thm:det} does not provide an obstruction since det$(K) = 17^2$. Let $F_+$ (resp. $F_-$) be the positive (resp. negative) definite checkerboard surface for the knot diagram in Figure \ref{fig:12a1105}. In Figure \ref{fig:12a1105graph} we draw corresponding compatibly oriented checkerboard graphs $\mathcal{G}(F_{\pm})$. The edges in each graph are enumerated by the crossings $e_i$ shown in Figure \ref{fig:12a1105graph}. Using $u_7$ and $v_7$ for the last row of the oriented incidence matrices $J_{\pm}$ (which we remove), we have
\[
J_+^*=\begin{bmatrix}
1 & 0 & 0 & 0 & -1 & 1 & 0 & 0 & 0 & 0 & 0 & 0 \\
-1 & 1 & 0 & 0 & 0 & 0 & 0 & 1 & 0 & 0 & 0 & 0 \\
0 & -1 & 1 & 0 & 0 & 0 & 0 & 0 & 1 & 0 & 0 & 1 \\
0 & 0 & -1 & 1 & 0 & 0 & 0 & 0 & 0 & 1 & 0 & -1 \\
0 & 0 & 0 & -1 & 1 & 0 & 0 & 0 & 0 & 0 & 1 & 0 \\
0 & 0 & 0 & 0 & 0 & -1 & 1 & 0 & 0 & 0 & 0 & 0 \\
\end{bmatrix},
\]
and
\[
J_-^*=\begin{bmatrix}
0 & 0 & 0 & 0 & 0 & 0 & 0 & 0 & -1 & 1 & 0 & 1 \\
0 & 1 & 0 & 0 & 0 & 0 & 0 & -1 & 1 & 0 & 0 & 0 \\
1 & 0 & 0 & 0 & 0 & -1 & -1 & 1 & 0 & 0 & 0 & 0 \\
0 & 0 & 0 & 0 & 1 & 1 & 1 & 0 & 0 & 0 & -1 & 0 \\
0 & 0 & 0 & 1 & 0 & 0 & 0 & 0 & 0 & -1 & 1 & 0 \\
0 & 0 & 1 & 0 & 0 & 0 & 0 & 0 & 0 & 0 & 0 & -1 \\
\end{bmatrix}.
\]
From these we can compute the Goeritz matrix for $F_+$:
\[
A_+ = J_+^*(J_+^*)^{\mathsf{T}} = \begin{bmatrix}
3 & -1 & 0 & 0 & -1 & -1 \\
-1 & 3 & -1 & 0 & 0 & 0 \\
0 & -1 & 4 & -2 & 0 & 0 \\
0 & 0 & -2 & 4 & -1 & 0 \\
-1 & 0 & 0 & -1 & 3 & 0 \\
-1 & 0 & 0 & 0 & 0 & 2 
\end{bmatrix}.
\]
We now combinatorially enumerate all possible lattice embeddings $A: (\mathbb{Z}^6,A_+) \to (\mathbb{Z}^6,$ Id), up to automorphisms of $\mathbb{Z}^6$. That is we enumerate integer matrices $A$ satisfying $A^{\mathsf{T}}A = A_+$, up to permutations and sign changes of the rows of $A$. We find two possibilities for $A$ which we denote $A_1$ and $A_2$; their transposes are
\[
A_1^{\mathsf{T}} = \begin{bmatrix}
-1&1&1&0&0&0\\
0&-1&0&1&1&0\\
1&0&1&0&-1&-1\\
-1&-1&0&-1&0&1\\
0&0&-1&1&-1&0\\
1&0&0&0&0&1
\end{bmatrix} \mbox{ and } A_2^{\mathsf{T}} =
\begin{bmatrix}
-1&1&1&0&0&0\\0&-1&0&1&1&0\\
-1&0&-1&-1&0&1\\
1&1&0&0&1&-1\\
0&0&-1&1&-1&0\\
1&0&0&0&0&1
\end{bmatrix}.
\]

\begin{figure}
  \begin{overpic}[width=350pt, grid=false]{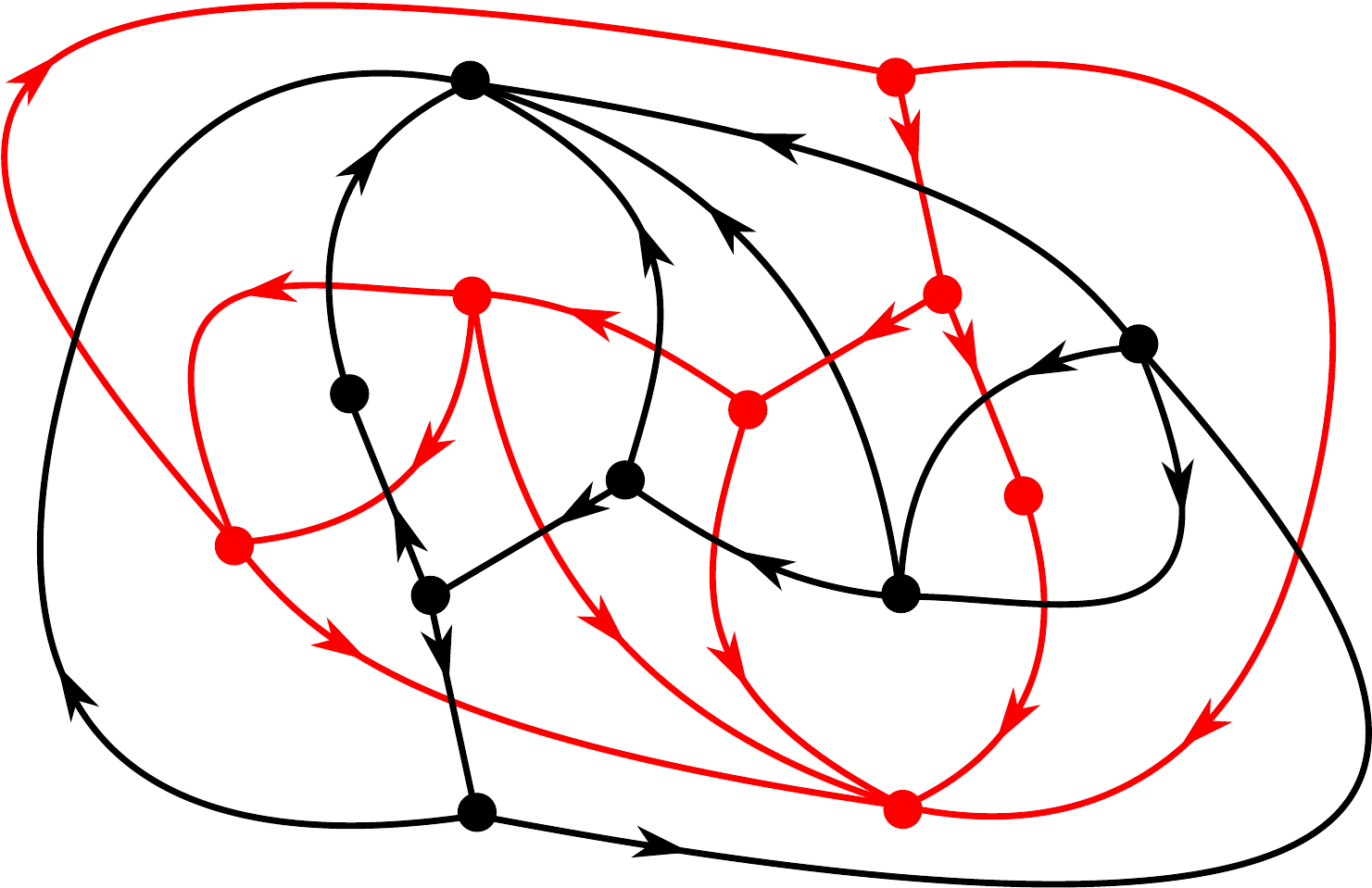}
    \put (33, 55) {$u_7$}
    \put (20, 35) {$u_6$}
    \put (33, 19) {$u_1$}
    \put (44, 26) {$u_5$}
    \put (36, 7) {$u_2$}
    \put (64, 18) {$u_4$}
    \put (84, 42) {$u_3$}

    \put (64, 9) {\textcolor{red}{$v_7$}}
    \put (77, 29) {\textcolor{red}{$v_6$}}
    \put (64, 44) {\textcolor{red}{$v_1$}}
    \put (53, 38) {\textcolor{red}{$v_5$}}
    \put (61, 57) {\textcolor{red}{$v_2$}}
    \put (34, 46) {\textcolor{red}{$v_4$}}
    \put (12, 24) {\textcolor{red}{$v_3$}}

    \put (30, 11) {\textcolor{blue}{$e_1$}}
    \put (96, 25) {\textcolor{blue}{$e_2$}}
    \put (77, 18) {\textcolor{blue}{$e_3$}}
    \put (53, 26) {\textcolor{blue}{$e_4$}}
    \put (35, 26) {\textcolor{blue}{$e_5$}}
    \put (24, 29) {\textcolor{blue}{$e_6$}}
    \put (20, 46) {\textcolor{blue}{$e_7$}}
    \put (1, 39) {\textcolor{blue}{$e_8$}}
    \put (68, 52) {\textcolor{blue}{$e_9$}}
    \put (62, 37) {\textcolor{blue}{$e_{10}$}}
    \put (43, 38) {\textcolor{blue}{$e_{11}$}}
    \put (73, 34) {\textcolor{blue}{$e_{12}$}}

  \end{overpic}
  \caption{The pair of checkerboard graphs of the alternating diagram for $12a_{1105}$ in Figure \ref{fig:12a1105}. They are exchanged by the strongly negative amphichiral symmetry. $\mathcal{G}(F_+)$ is black and $\mathcal{G}(F_-)$ is red. The $\{e_i\}$ correspond to crossings in the knot diagram.}
  \label{fig:12a1105graph}
\end{figure}
Neither matrix satisfies the $\widetilde{\sigma}^*$-invariance condition in Theorem \ref{thm:combinatorial_thing}. We will show this for the matrix $A_1$; the computation for $A_2$ is similar. For $A_1$, we compute that the set
\[
S = \{[u]\in\spinc(Y) : u = A_1^{\mathsf{T}}v\mbox{ for some } v \in \mathbb{Z}^n \mbox{ with } v \equiv (1,1,\dots, 1)^{\mathsf{T}} \ (\textup{mod 2}) \}
\]
consists of the 17 classes represented by the following vectors:

\begin{align*} \bigg\{ \left[
\begin{array}{c}
 1 \\
 1 \\
 -2 \\
 -2 \\
 1 \\
 -2 \\
\end{array}
\right],&\left[
\begin{array}{c}
 3 \\
 -3 \\
 2 \\
 0 \\
 -1 \\
 -2 \\
\end{array}
\right],\left[
\begin{array}{c}
 3 \\
 1 \\
 0 \\
 -2 \\
 -1 \\
 -2 \\
\end{array}
\right],\left[
\begin{array}{c}
 -1 \\
 1 \\
 -4 \\
 4 \\
 -1 \\
 0 \\
\end{array}
\right],\left[
\begin{array}{c}
 1 \\
 -3 \\
 4 \\
 -2 \\
 -1 \\
 0 \\
\end{array}
\right],\left[
\begin{array}{c}
 1 \\
 1 \\
 -2 \\
 4 \\
 -3 \\
 0 \\
\end{array}
\right],\left[
\begin{array}{c}
 3 \\
 -1 \\
 -2 \\
 2 \\
 -3 \\
 0 \\
\end{array}
\right],\left[
\begin{array}{c}
 -1 \\
 -3 \\
 6 \\
 -4 \\
 -1 \\
 2 \\
\end{array}
\right] , \left[
\begin{array}{c}
 -1 \\
 1 \\
 0 \\
 2 \\
 -3 \\
 2 \\
\end{array}
\right], \\ &\left[
\begin{array}{c}
 1 \\
 -1 \\
 0 \\
 -2 \\
 3 \\
 -2 \\
\end{array}
\right],\left[
\begin{array}{c}
 1 \\
 3 \\
 -6 \\
 4 \\
 1 \\
 -2 \\
\end{array}
\right],\left[
\begin{array}{c}
 -3 \\
 1 \\
 2 \\
 -2 \\
 3 \\
 0 \\
\end{array}
\right],\left[
\begin{array}{c}
 -1 \\
 -1 \\
 2 \\
 -4 \\
 3 \\
 0 \\
\end{array}
\right],\left[
\begin{array}{c}
 -1 \\
 3 \\
 -4 \\
 2 \\
 1 \\
 0 \\
\end{array}
\right],\left[
\begin{array}{c}
 1 \\
 -1 \\
 4 \\
 -4 \\
 1 \\
 0 \\
\end{array}
\right],\left[
\begin{array}{c}
 -3 \\
 -1 \\
 0 \\
 2 \\
 1 \\
 2 \\
\end{array}
\right],\left[
\begin{array}{c}
 -3 \\
 3 \\
 -2 \\
 0 \\
 1 \\
 2 \\
\end{array}
\right] \bigg\}.
\end{align*}
We will show that this collection $S$ of $\spinc$-structures on $\Sigma(S^3,K)$ is not $\widetilde{\sigma}^*$-invariant. Specifically, we will show that the $\spinc$-structure represented by the second vector $\mathfrak{s} = (3,-3,2,0,-1,-2)^{\mathsf{T}}$ is mapped by $\widetilde{\sigma}^*$ to a $\spinc$-structure not contained in $S$. 

Consider the vector 
\[\widetilde{\mathfrak{s}} = (7,3,3,3,1,-3,-5,1,1,1,1,1)^{\mathsf{T}} \in \mathbb{Z}^{12}.
\]
Multiplying, we see that $J_+^*(\widetilde{\mathfrak{s}}) = \mathfrak{s}$ and $J_-^*(\widetilde{\mathfrak{s}}) = (1,3,16,-8,3,2)^{\mathsf{T}}$. A straightforward linear algebra computation shows that $(1,3,16,-8,3,2)^{\mathsf{T}}$ is not equivalent (mod $2A_+$) to any of the 17 vectors in $S$. Hence $\widetilde{\sigma}^*[J_+^*(\widetilde{\mathfrak{s}})] = [J_-^*(\widetilde{\mathfrak{s}})]$ is not in $S$. Along with a similar computation for $A_2$, this implies that $K$ is not equivariantly slice by Theorem \ref{thm:combinatorial_thing}.
\end{example}
\section{Heegaard Floer correction terms}
In this section we give a necessary condition on the Heegaard Floer correction terms $d(\Sigma(S^3,K),\mathfrak{s})$, also known as $d$-invariants, for a knot to be strongly negative amphichiral. In the case of periodic knots, a similar type of condition was proved by Jabuka and Naik in \cite{MR3519536}. As in the case of periodic knots, we first need invariance of the $d$-invariants.

\begin{lemma} \label{lemma:d-invariants}
Let $Y$ be a rational homology 3-sphere with $\mathfrak{s} \in \spinc(Y)$ and $\sigma:Y \to Y$ an orientation reversing diffeomorphism. Then 
\[
d(Y,\sigma^*(\mathfrak{s})) = -d(Y,\mathfrak{s}).
\]
\end{lemma}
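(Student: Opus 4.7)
The plan is to combine two standard properties of the Heegaard Floer correction term: its behavior under orientation reversal and its invariance under orientation-preserving diffeomorphisms. Recall that Ozsv\'ath--Szab\'o established
\[
d(-Y,\mathfrak{s}) = -d(Y,\mathfrak{s})
\]
for any rational homology 3-sphere $Y$ and any $\mathfrak{s} \in \spinc(Y)$, and that if $f\colon Y_1 \to Y_2$ is an orientation-preserving diffeomorphism of rational homology spheres then $d(Y_1,f^*\mathfrak{t}) = d(Y_2,\mathfrak{t})$ for every $\mathfrak{t}\in\spinc(Y_2)$.

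Given these, the argument is immediate. Since $\sigma\colon Y\to Y$ reverses orientation, the same underlying map can be viewed as an orientation-preserving diffeomorphism
\[
\sigma\colon -Y \longrightarrow Y.
\]
Applying diffeomorphism invariance to this map gives $d(-Y,\sigma^*\mathfrak{s}) = d(Y,\mathfrak{s})$. Then applying the orientation reversal formula to the left-hand side yields
\[
d(Y,\sigma^*\mathfrak{s}) = -d(-Y,\sigma^*\mathfrak{s}) = -d(Y,\mathfrak{s}),
\]
which is the claim.

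There is no real obstacle here; the only subtlety worth noting is keeping careful track of which $\spinc$-structure is being pulled back on which manifold, since $\spinc(Y)$ and $\spinc(-Y)$ are canonically identified as sets (a $\spinc$-structure does not depend on the orientation of the underlying manifold), so the pullback $\sigma^*\mathfrak{s}$ is unambiguous. With that identification in place, both cited properties apply directly, and no further computation is needed.
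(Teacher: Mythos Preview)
Your argument is correct and is precisely what the paper's one-line proof (``This follows directly from the diffeomorphism invariance of Heegaard Floer homology'') unpacks to. You have simply made explicit the two ingredients---invariance under orientation-preserving diffeomorphisms and the sign change under orientation reversal---that together constitute diffeomorphism invariance in this context.
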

\begin{proof}
This follows directly from the diffeomorphism invariance of Heegaard Floer homology.
\end{proof}
This implies the following theorem.
\begin{thm:d-invariants}
Let $(K,\sigma)$ be a strongly negative amphichiral knot and let $\widetilde{\sigma}$ be a lift of $\sigma$ to $\Sigma(S^3,K)$ (see Proposition \ref{prop:snalift}). Then the orbits of the $d$-invariants of $\Sigma(S^3,K)$ under the action of $\widetilde{\sigma}$ are of the following form.
\begin{itemize}
\item There is a exactly one orbit $\{\mathfrak{s}_0\}$ of order 1. Moreover, $d(\Sigma(S^3,K),\mathfrak{s}_0) = 0$.
\item All other orbits $\{\mathfrak{s},\widetilde{\sigma}(\mathfrak{s}),\widetilde{\sigma}^2(\mathfrak{s}),\widetilde{\sigma}^3(\mathfrak{s})\}$ have order 4. Moreover, $d(\Sigma(S^3,K),\widetilde{\sigma}^i(\mathfrak{s})) = (-1)^i \cdot r$ for some $r \in \mathbb{Q}$.
\end{itemize}
\end{thm:d-invariants}

\begin{proof}
Let $F \subset S^3$ be an arbitrary spanning surface for $K$ and let $X = \Sigma(B^4,F)$. Then we can choose $\{u_i\}$ so that $H_2(X) = \mathbb{Z}\langle u_1,\dots,u_n \rangle$. Moreover, $\spinc(\Sigma(S^3,K)) = \mbox{Char}(\mathbb{Z} \langle u_i \rangle)/\textup{im}(2Q)$ where $Q$ is the intersection form matrix of $H_2(X)$ in the basis $\{u_i\}$ and $\mbox{Char}(\mathbb{Z} \langle u_i \rangle) \subseteq \mathbb{Z}\langle u_i^* \rangle \cong H_2(X)^*$. Since $\widetilde{\sigma}^2$ is the deck transformation action $\tau$ by Proposition \ref{prop:snalift} and $\tau_*:H_2(X) \to H_2(X)$ is $-$Id by Lemma \ref{lemma:tauhomology}, then the induced action $(\widetilde{\sigma}^*)^2:H_2(X)^* \to H_2(X)^*$ is also $-$Id. Since $\widetilde{\sigma}$ has order 4, the $\widetilde{\sigma}^*$-orbits of the $\spinc$-structures will have order 1, 2, or 4. 

For $j \in \{1,2\}$, let $v_j \in \mbox{Char}(\mathbb{Z} \langle u_i \rangle)$ represent a $\spinc$-structure $[v_j]$ on $\Sigma(S^3,K)$ with $(\widetilde{\sigma}^*)^2([v_j]) = [v_j]$ so that the orbit of $[v_j]$ has order 1 or 2. Then $[v_j] = -[v_j]$ so that $2v_j = 2Qw_j$ and hence $v_j = Qw_j$ for some $w_j \in \mathbb{Z}^n$. Subtracting these we have that $Q(w_1-w_2) = v_1 - v_2 \equiv 0$ (mod 2) since $v_j$ is a characteristic covector. Then since det($Q) = $ det$(K)$ which is odd, $Q$ is invertible over $\mathbb{Z}/2$. Multiplying by $Q^{-1}$ (mod 2), we get that $w_1 - w_2 \equiv 0$ (mod 2) as well. In particular, $v_1 - v_2$ is in the image of $2Q$ so that $[v_1] = [v_2]$. Hence there is a unique $\spinc$-structure with an orbit of order 1 or 2. Finally, there are an odd number of $\spinc$-structures, so there must be an orbit of order 1 and all other orbits have order 4. The statements about the $d$-invariants now follow from Lemma \ref{lemma:d-invariants}.
\end{proof}

\begin{example}
The $d$-invariants of $\Sigma(S^3,6_1)$, appropriately oriented, are 
\[
\dfrac{-4}{9},\dfrac{-4}{9},0,0,0,\dfrac{2}{9},\dfrac{2}{9}, \dfrac{8}{9},\dfrac{8}{9}.
\]
Since these are not of the form required by Theorem \ref{thm:d-invariants}, $6_1$ is not strongly negative amphichiral. We compare this to the strongly negative amphichiral knot $6_3$, for which $\Sigma(S^3,6_3)$ has $d$-invariants
\[
0,\ \dfrac{8}{13},\dfrac{-8}{13},\dfrac{8}{13},\dfrac{-8}{13},\ \dfrac{6}{13},\dfrac{-6}{13},\dfrac{6}{13},\dfrac{-6}{13},\ \dfrac{2}{13},\dfrac{-2}{13},\dfrac{2}{13},\dfrac{-2}{13}.
\]
\end{example}

\iffalse
In fact, we checked that for 2-bridge knots with 12 or fewer crossings the $d$-invariants satisfy Theorem \ref{thm:d-invariants} precisely when the knot is strongly negative amphichiral, leading to the following conjecture.
\begin{conj:2-bridge}
If $K$ is a 2-bridge knot then $K$ is strongly negative amphichiral if and only if the condition in Theorem \ref{thm:d-invariants} is satisfied.
\end{conj:2-bridge}
\begin{remark}
Since 2-bridge knots are alternating and prime, they are either hyperbolic or torus knots \cite{MR721450}. Torus knots are chiral, and a hyperbolic knot is negative amphichiral if and only if it is strongly negative amphichiral. Thus the above conjecture would characterize all negative amphichiral 2-bridge knots in terms of $d$-invariants.
\end{remark}
\fi

\section{A table of slice strongly negative amphichiral prime knots with 12 or fewer crossings}\label{sec:sliceSNAKStable}
We conclude with a table of all slice strongly negative amphichiral prime knots with 12 or fewer crossings. These are categorized as follows:
\begin{enumerate}[align=parleft, labelsep=0cm,]
\item[(Rib):] \,\,\,\, Knots for which we have found an equivariant ribbon diagram. We indicate this with a pair of equivariant bands (in red) which reduce the knot to a 3-component unlink.
\item[(Det):] \,\,\,\, Knots for which Theorem \ref{thm:det} obstructs an equivariant slice disk.
\item[($\spinc$):] \,\,\,\, Knots for which the obstruction from Theorem \ref{thm:det} fails but Theorem \ref{thm:combinatorial_thing} obstructs an equivariant slice disk.
\item[(Unk):] \,\,\,\, Knots for which we were unable to find or obstruct an equivariant slice disk.
\end{enumerate}
We also include the knot determinant and whether the knot is equivariantly slice.
\begin{longtable}{c|c|c|c|c}
Name & Diagram & Eq. slice & Category & Det \\*\hline \endhead \hline \hline
 & \multirow{9}{*}{\scalebox{.4}{\includegraphics{Inkscape Diagrams/8_9Ribbon.pdf}}}&&&\\*
 $8_9$&&Yes&(Rib)&$5^2$\\*
 &&&&\\*
 &&&&\\*
 &&&&\\*
 &&&&\\*
 &&&&\\*
 &&&&\\*
 &&&&\\\hline\hline

 & \multirow{9}{*}{\scalebox{.4}{\includegraphics{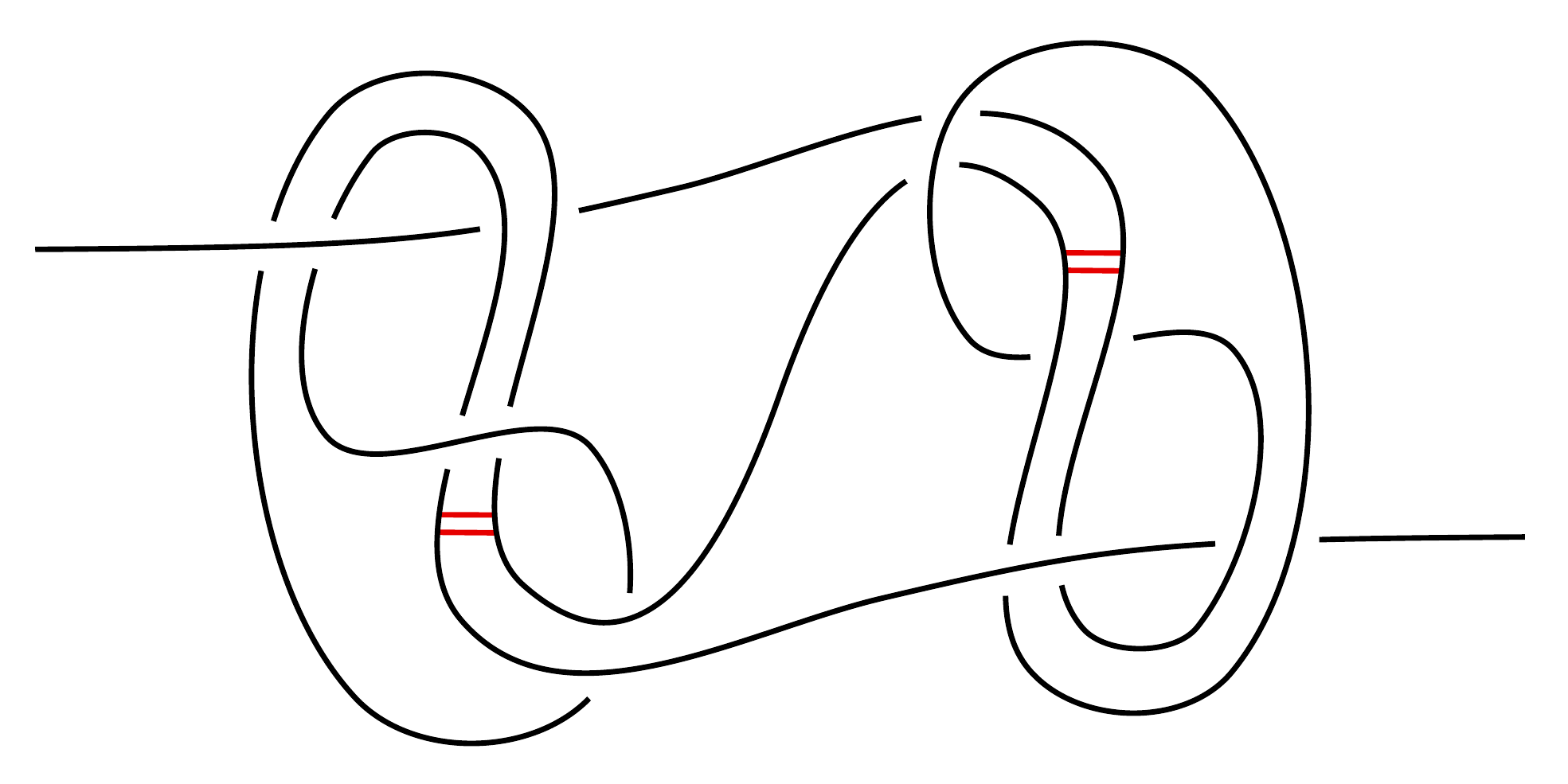}}}&&&\\*
 $10_{99}$&&Yes&(Rib)&$9^2$\\*
 &&&\\*
 &&&\\*
 &&&\\*
 &&&\\*
 &&&\\*
 &&&\\*
 &&&\\\hline\hline

 & \multirow{9}{*}{\scalebox{.4}{\includegraphics{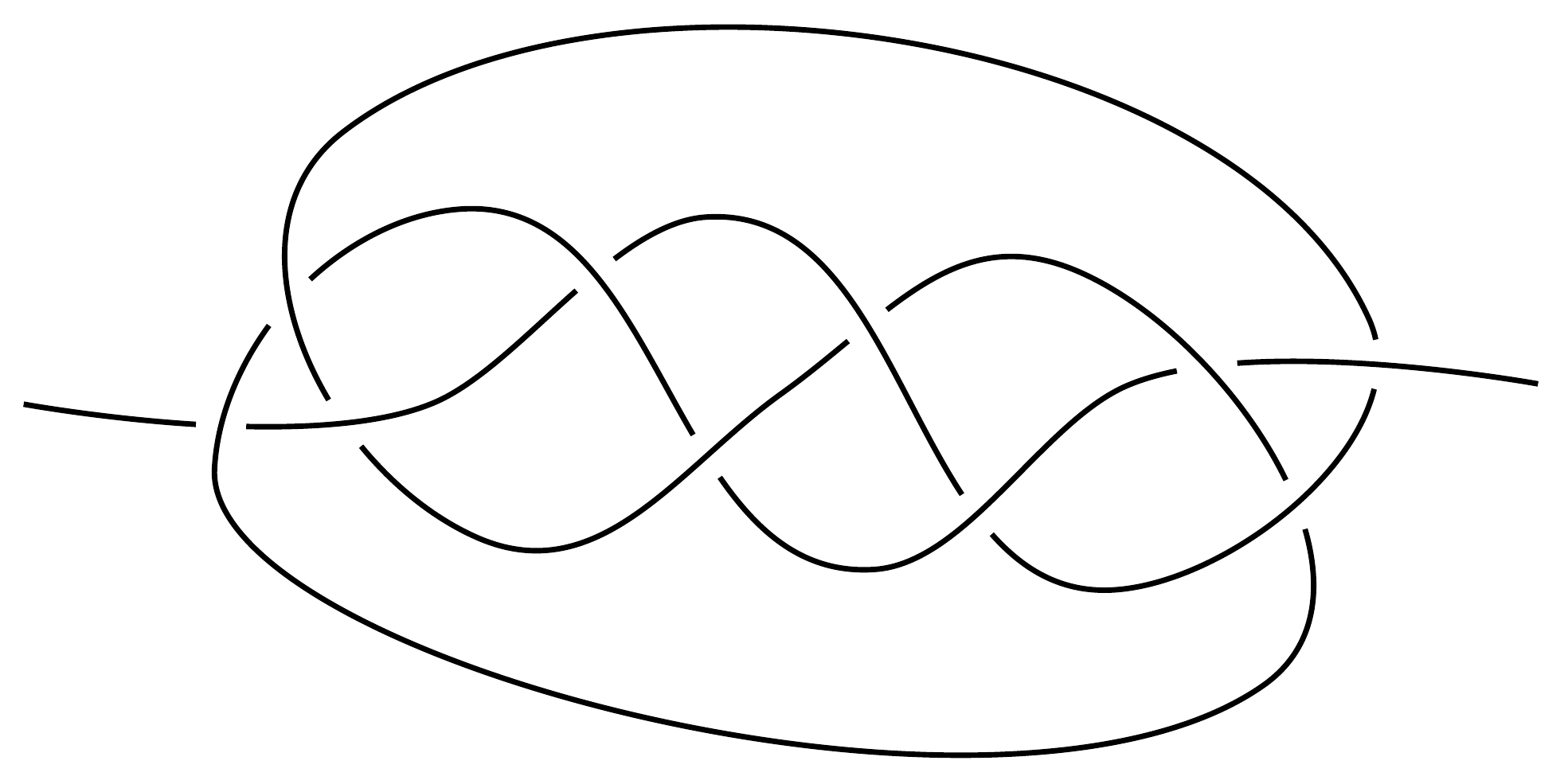}}}&&&\\*
 $10_{123}$&&No&(Det)&$11^2$\\*
 &&&\\*
 &&&\\*
 &&&\\*
 &&&\\*
 &&&\\*
 &&&\\*
 &&&\\\hline\hline

 & \multirow{9}{*}{\scalebox{.4}{\includegraphics{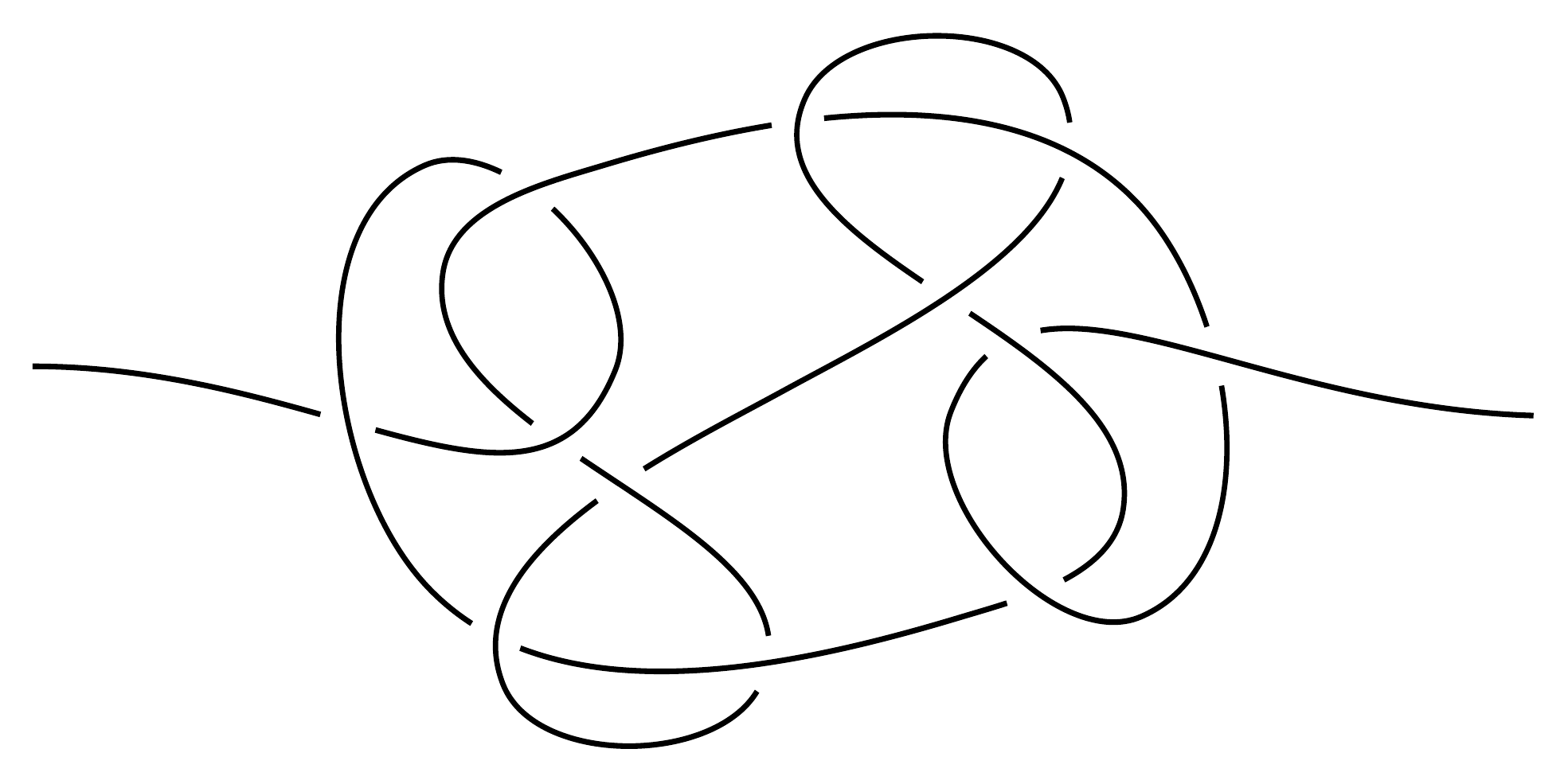}}}&&&\\*
 $12a_{435}$&&No&(Det)&$15^2$\\*
 &&&\\*
 &&&\\*
 &&&\\*
 &&&\\*
 &&&\\*
 &&&\\*
 &&&\\\hline\hline

 & \multirow{9}{*}{\scalebox{.4}{\includegraphics{Inkscape Diagrams/12a458.pdf}}}&&&\\*
 $12a_{458}$&&Unknown&(Unk)&$17^2$\\*
 &&&\\*
 &&&\\*
 &&&\\*
 &&&\\*
 &&&\\*
 &&&\\*
 &&&\\\hline\hline

 & \multirow{9}{*}{\scalebox{.4}{\includegraphics{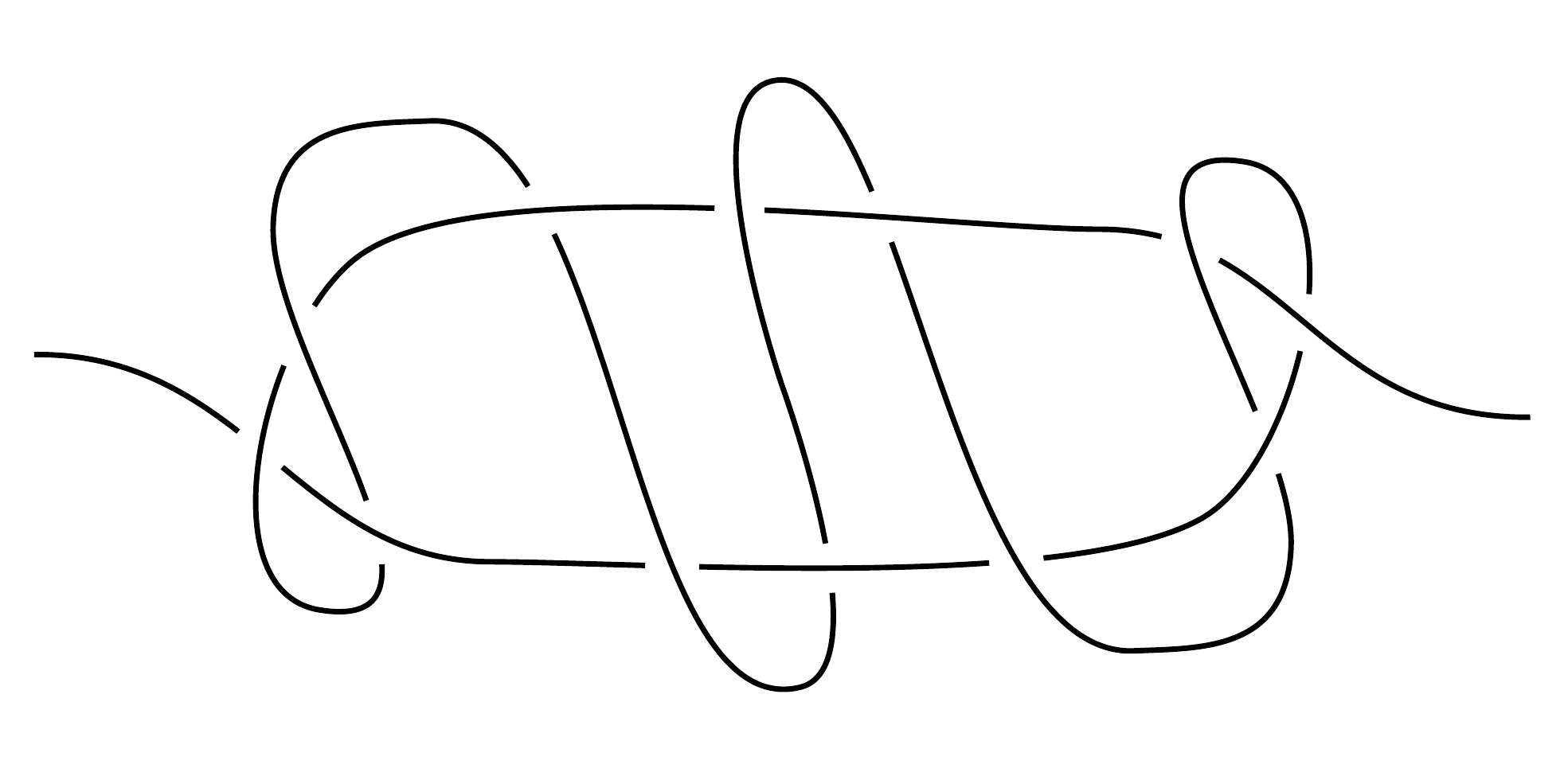}}}&&&\\*
 $12a_{477}$&&Unknown&(Unk)&$13^2$\\*
 &&&\\*
 &&&\\*
 &&&\\*
 &&&\\*
 &&&\\*
 &&&\\*
 &&&\\\hline\hline

 & \multirow{9}{*}{\scalebox{.4}{\includegraphics{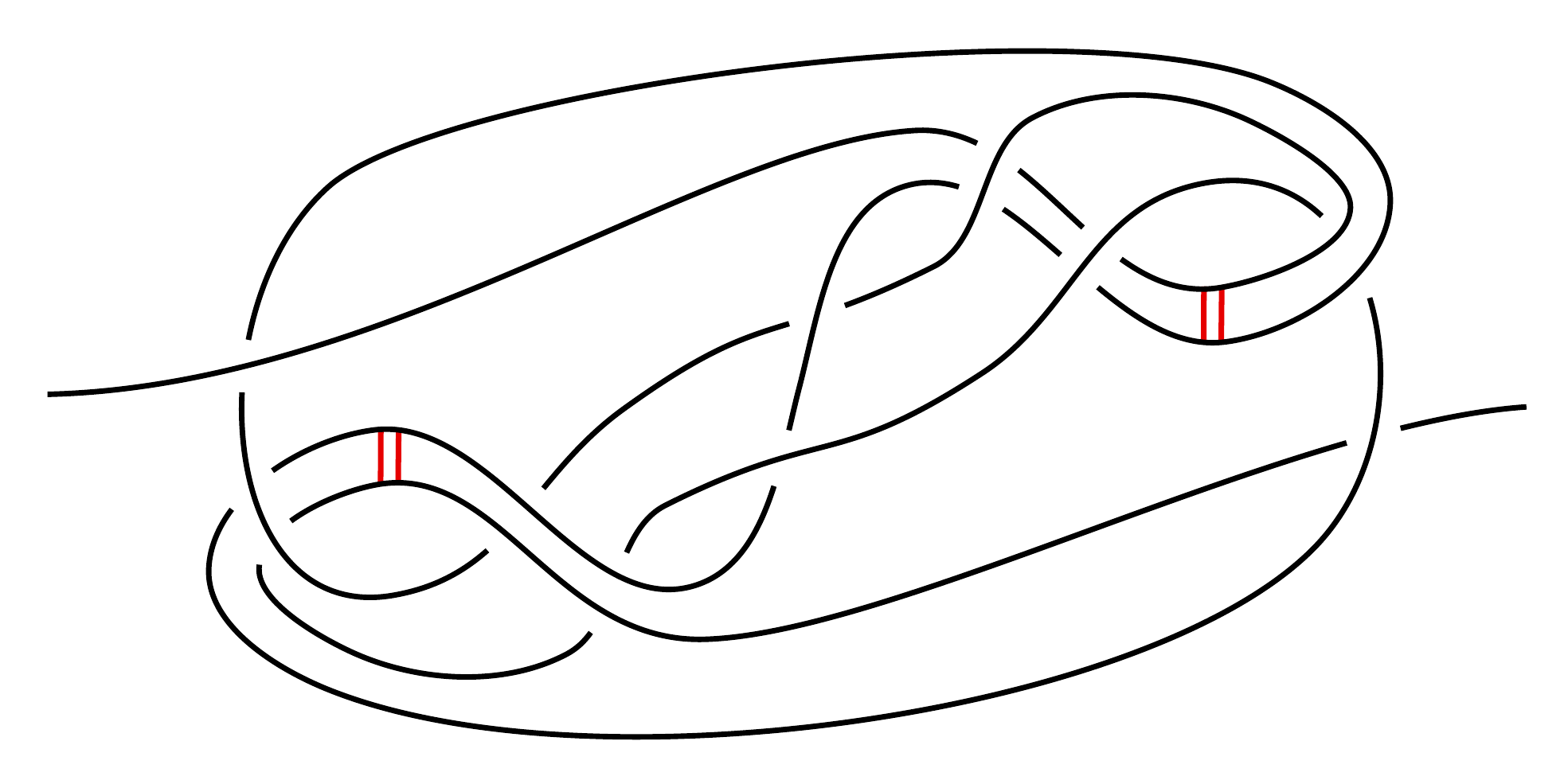}}}&&&\\*
 $12a_{819}$&&Yes&(Rib)&$13^2$\\*
 &&&\\*
 &&&\\*
 &&&\\*
 &&&\\*
 &&&\\*
 &&&\\*
 &&&\\\hline\hline

 & \multirow{9}{*}{\scalebox{.4}{\includegraphics{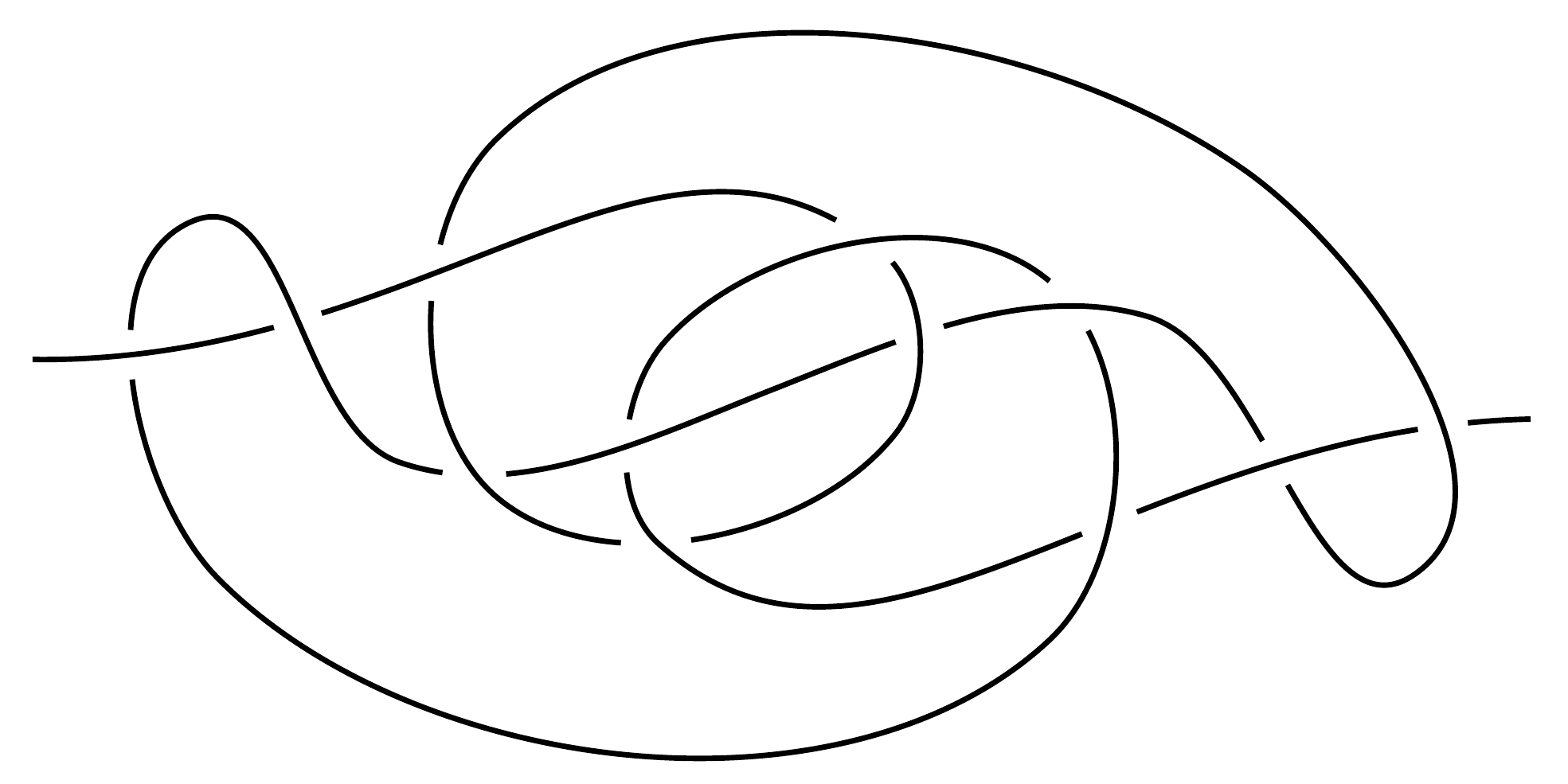}}}&&&\\*
 $12a_{887}$&&Unknown&(Unk)&$17^2$\\*
 &&&\\*
 &&&\\*
 &&&\\*
 &&&\\*
 &&&\\*
 &&&\\*
 &&&\\\hline\hline

 & \multirow{9}{*}{\scalebox{.4}{\includegraphics{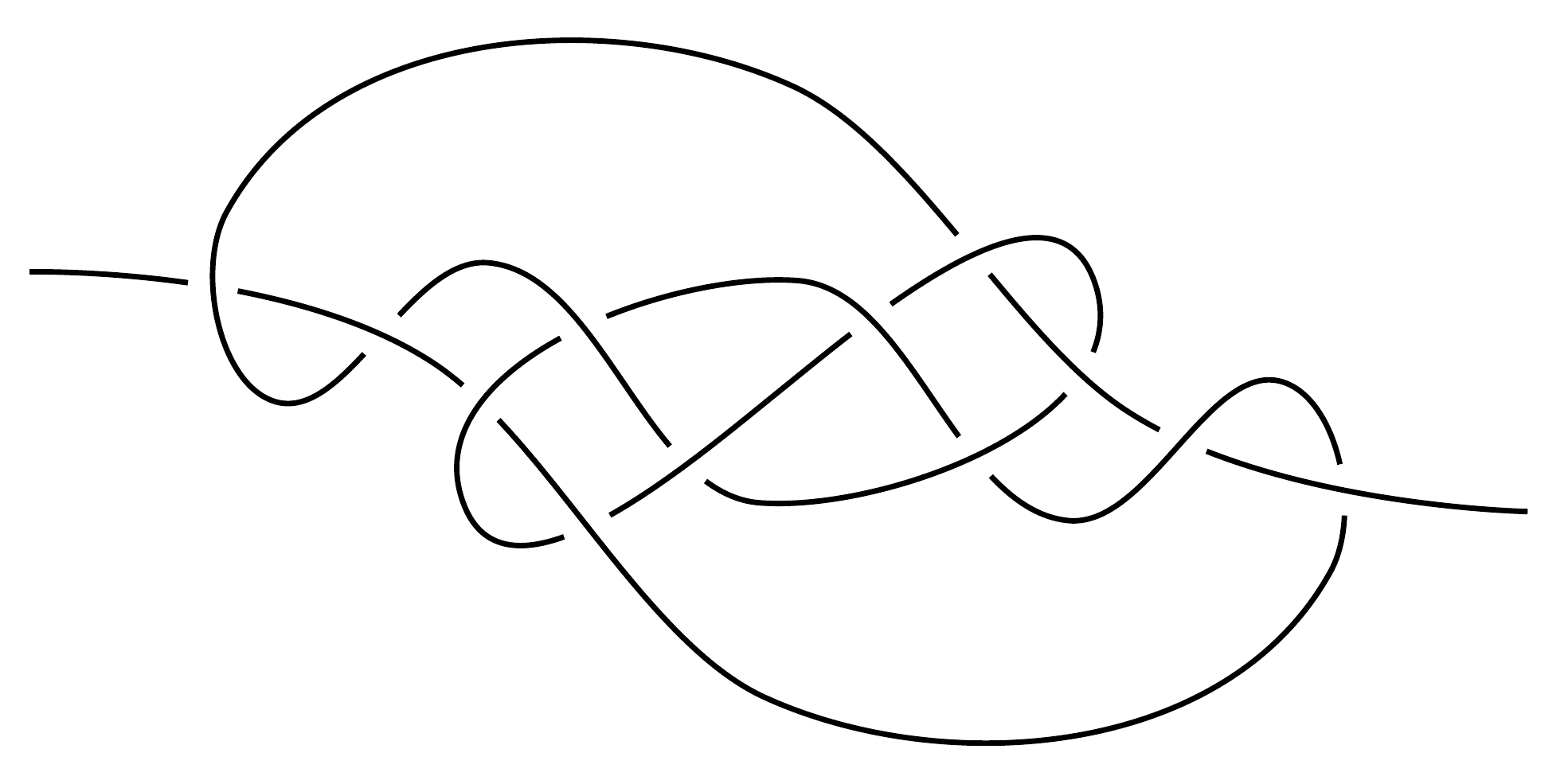}}}&&&\\*
 $12a_{990}$&&No&(Det)&$15^2$\\*
 &&&\\*
 &&&\\*
 &&&\\*
 &&&\\*
 &&&\\*
 &&&\\*
 &&&\\\hline\hline

 & \multirow{9}{*}{\scalebox{.4}{\includegraphics{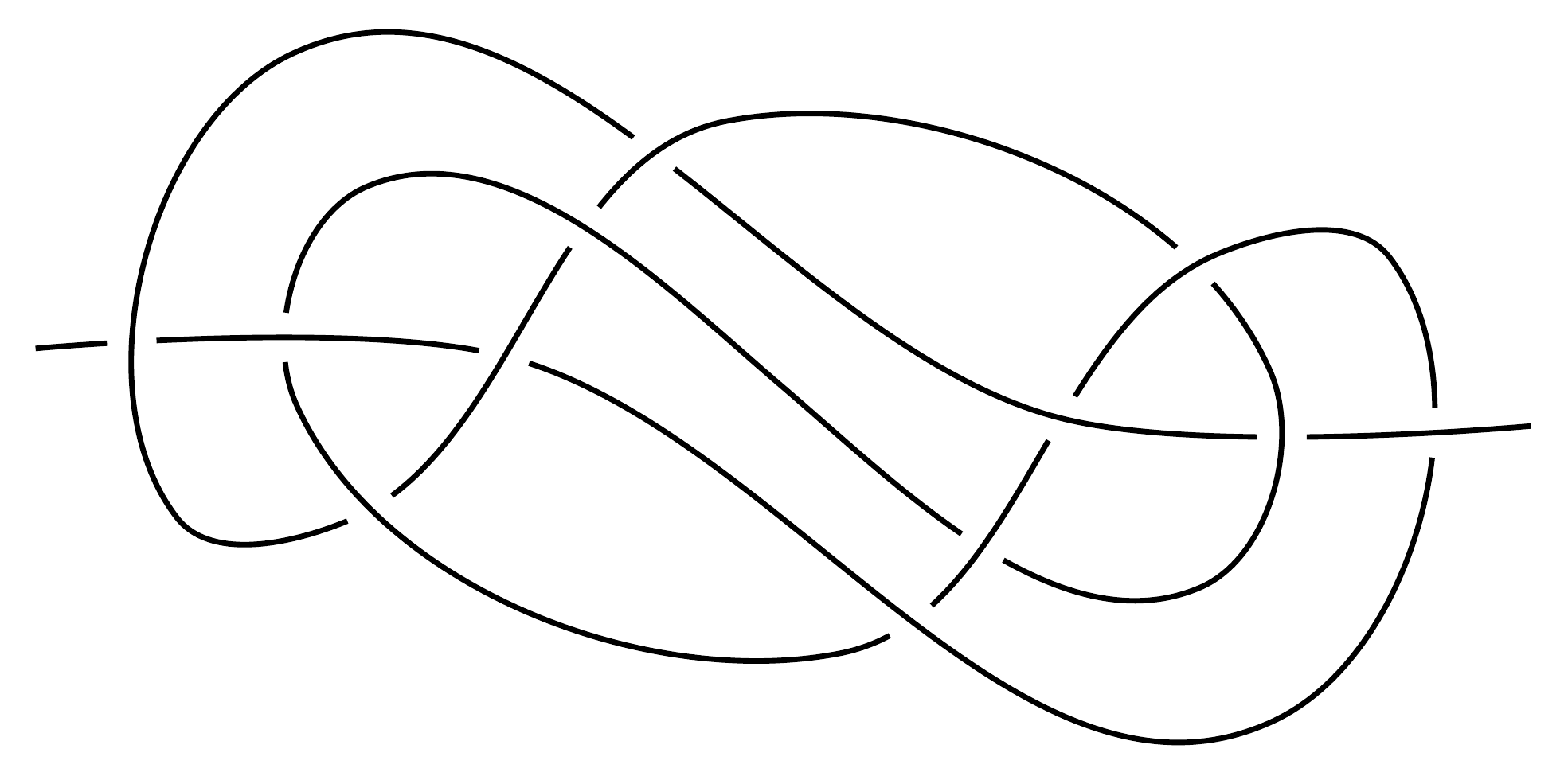}}}&&&\\*
 $12a_{1019}$&&No&(Det)&$19^2$\\*
 &&&\\*
 &&&\\*
 &&&\\*
 &&&\\*
 &&&\\*
 &&&\\*
 &&&\\\hline\hline

 & \multirow{9}{*}{\scalebox{.4}{\includegraphics{Inkscape Diagrams/12a1105.pdf}}}&&&\\*
 $12a_{1105}$&&No&($\spinc$)&$17^2$\\*
 &&&\\*
 &&&\\*
 &&&\\*
 &&&\\*
 &&&\\*
 &&&\\*
 &&&\\\hline\hline

 & \multirow{9}{*}{\scalebox{.4}{\includegraphics{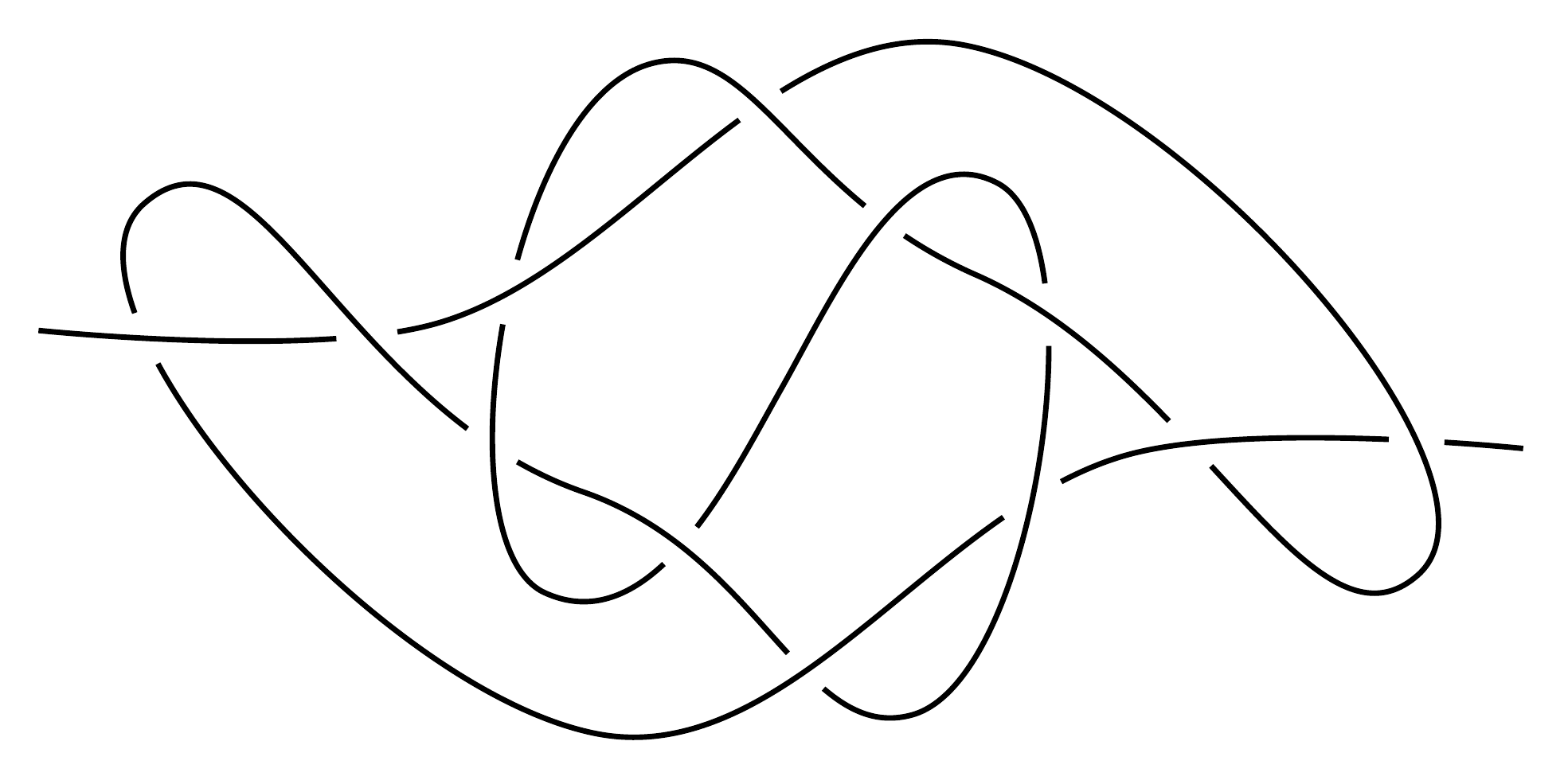}}}&&&\\*
 $12a_{1202}$&&No&($\spinc$)&$13^2$\\*
 &&&\\*
 &&&\\*
 &&&\\*
 &&&\\*
 &&&\\*
 &&&\\*
 &&&\\\hline\hline

 & \multirow{9}{*}{\scalebox{.4}{\includegraphics{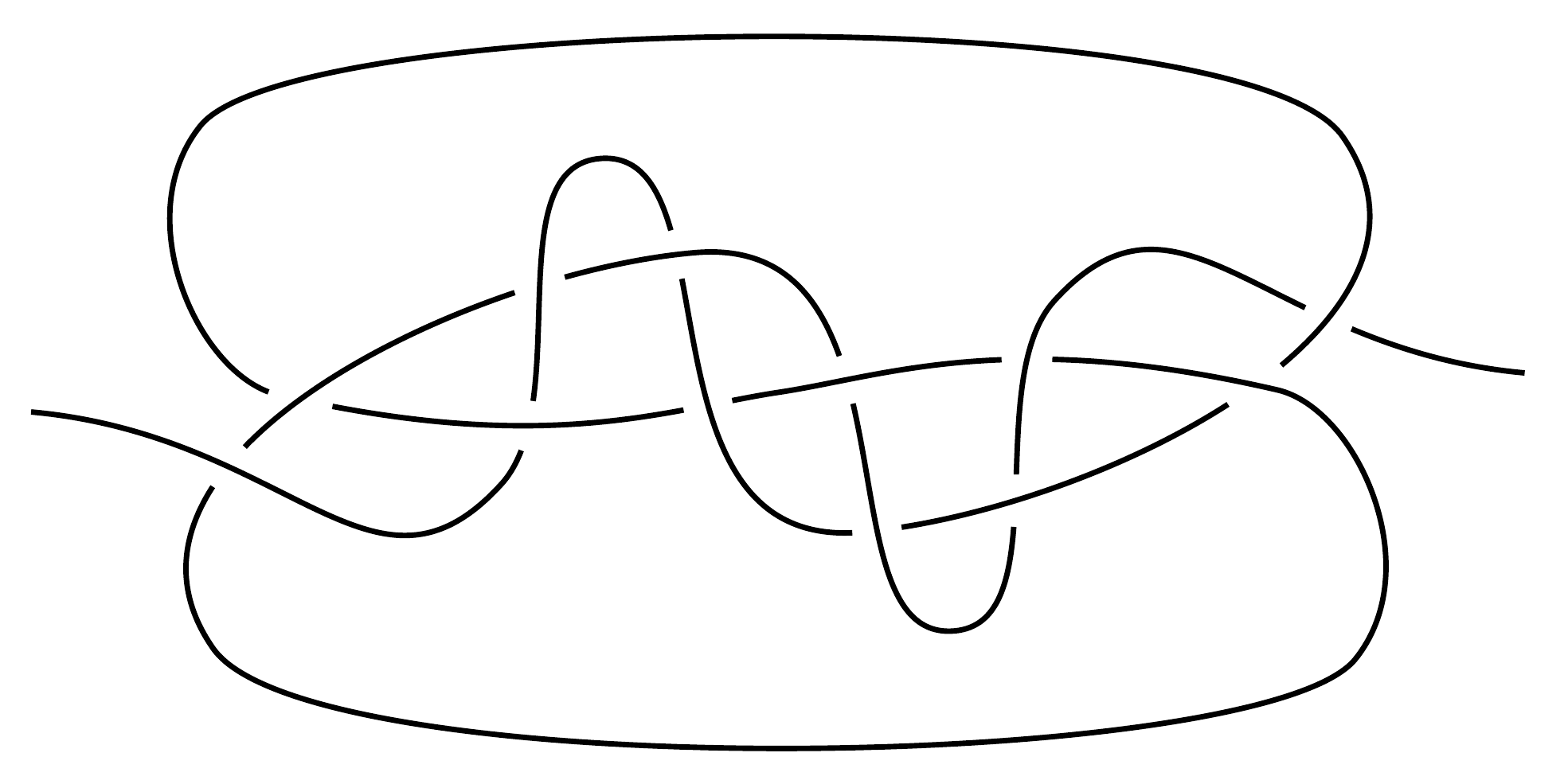}}}&&&\\*
 $12a_{1225}$&&No&(Det)&$15^2$\\*
 &&&\\*
 &&&\\*
 &&&\\*
 &&&\\*
 &&&\\*
 &&&\\*
 &&&\\\hline\hline

 & \multirow{9}{*}{\scalebox{.4}{\includegraphics{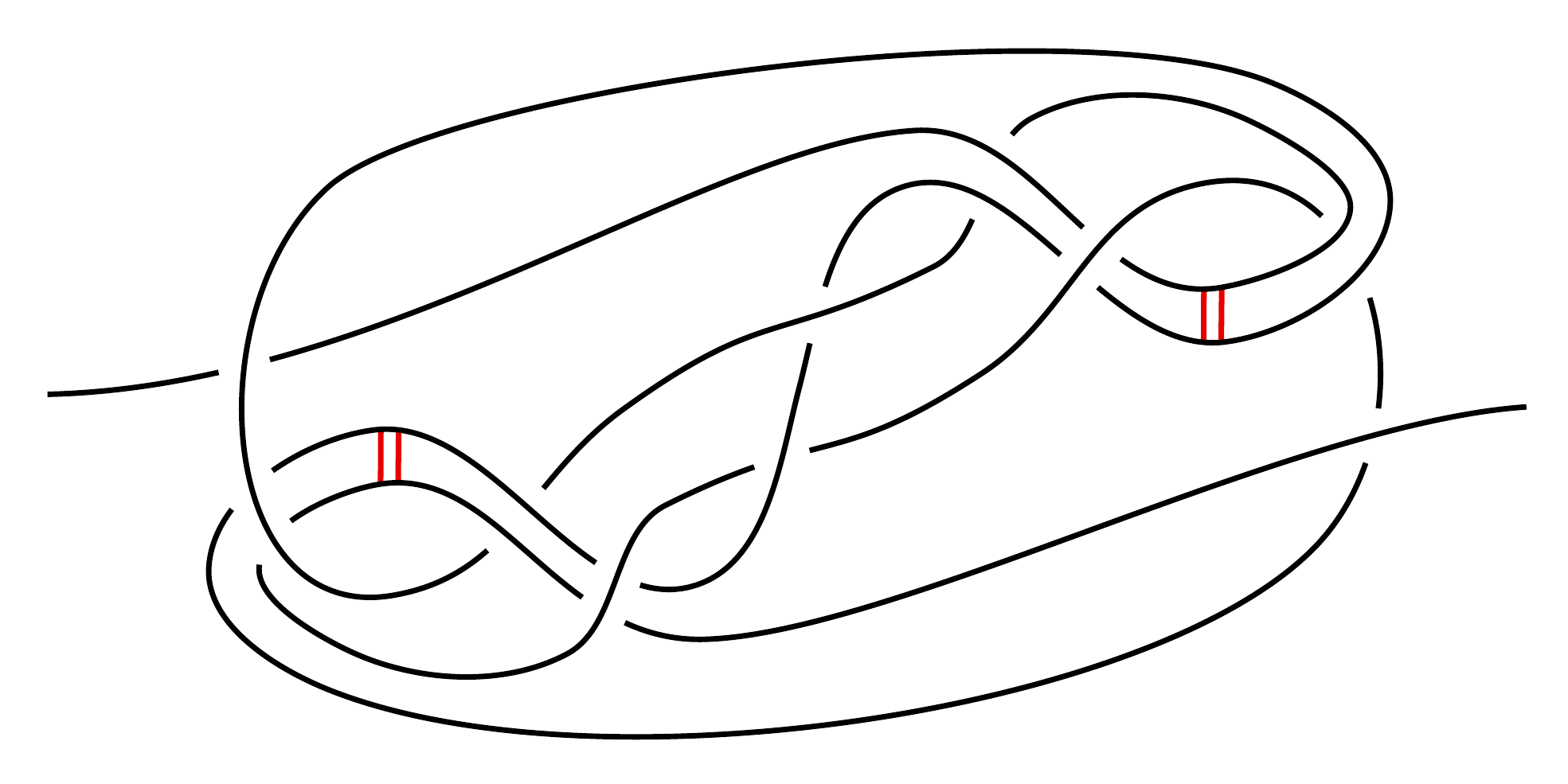}}}&&&\\*
 $12a_{1269}$&&Yes&(Rib)&$13^2$\\*
 &&&\\*
 &&&\\*
 &&&\\*
 &&&\\*
 &&&\\*
 &&&\\*
 &&&\\\hline\hline

 & \multirow{9}{*}{\scalebox{.4}{\includegraphics{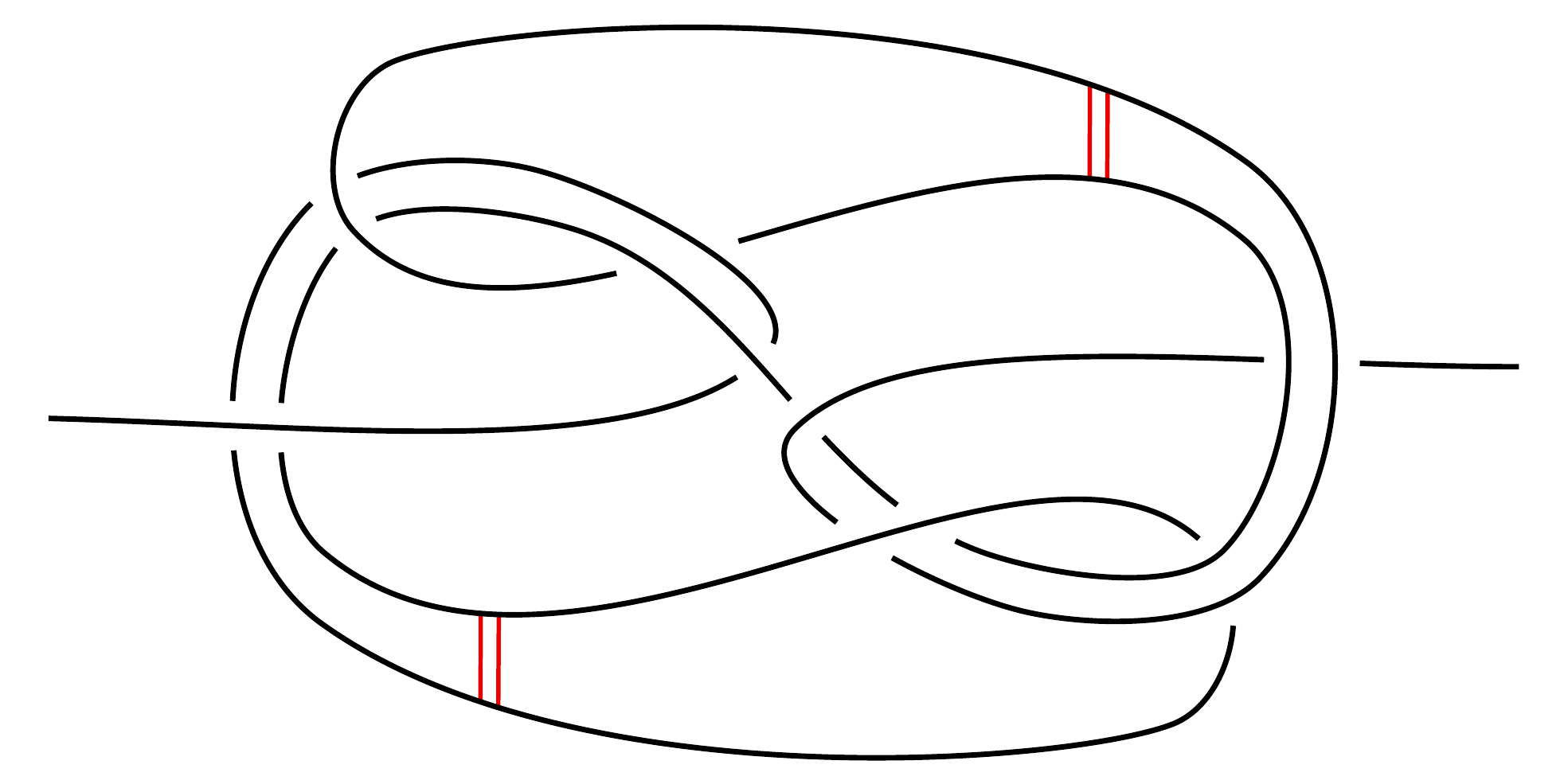}}}&&&\\*
 $12n_{462}$&&Yes&(Rib)&$5^2$\\*
 &&&\\*
 &&&\\*
 &&&\\*
 &&&\\*
 &&&\\*
 &&&\\*
 &&&\\\hline\hline

 & \multirow{9}{*}{\scalebox{.4}{\includegraphics{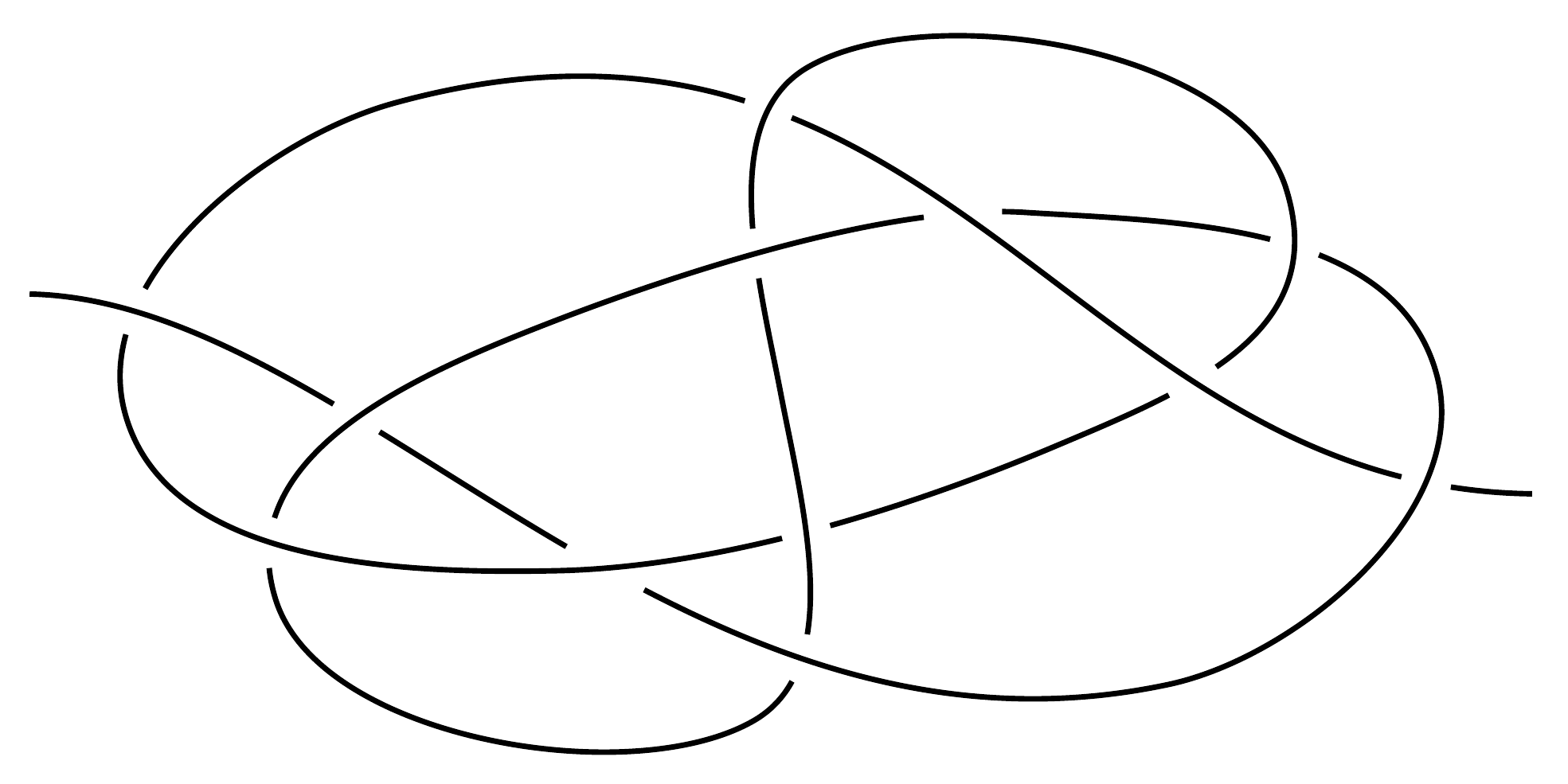}}}&&&\\*
 $12n_{706}$&&No&(Det)&$7^2$\\*
 &&&\\*
 &&&\\*
 &&&\\*
 &&&\\*
 &&&\\*
 &&&\\*
 &&&\\\hline\hline
\end{longtable}

\bibliography{bibliography}
\bibliographystyle{alpha}

\end{document}